\documentclass[12pt]{article}
\pdfoutput=1

\usepackage{graphicx} 

\usepackage[utf8]{inputenc}
\usepackage[numbers]{natbib}
\usepackage{amsmath} 
\usepackage{bbm}
\usepackage{amsfonts,amssymb,amsthm}
\usepackage{mathrsfs}
\usepackage{enumitem}
\usepackage{indentfirst}
\usepackage{geometry}
\usepackage{url}
\usepackage[normalem]{ulem}
\usepackage{comment}

\allowdisplaybreaks[4]
\numberwithin{equation}{section}

\newcommand{\dd}{\ensuremath{\mathrm{d}}}

\newtheorem{definition}{Definition}[section]

\newtheorem{theorem}[definition]{\textbf{Theorem}}
\newtheorem{lemma}[definition]{\textbf{Lemma}}
\newtheorem{proposition}[definition]{\textbf{Proposition}}
\newtheorem{corollary}[definition]{\textbf{Corollary}}
\newtheorem{remark}[definition]{\textbf{Remark}}
\newtheorem{assumption}[definition]{\textbf{Assumption}}
\usepackage[colorlinks=true, allcolors=blue]{hyperref}
\usepackage{xcolor}

\def\r{{\mathbb R}}
\def\e{{\mathbb E}}
\def\p{{\mathbb P}}

\def\z{{\mathbb Z}}
\def\N{{\mathbb N}}

\def\h{{\mathbb H}}
\def\bD{\mathbb D}
\def\cG{\mathcal G}
\def\cI{\mathcal I}
\def\hsig{\hat \sigma}

\def\cD{\mathcal D}

\def\cW{\mathcal W}

\def\cS{\mathcal S}
\def\tcS{\widetilde{\mathcal S}}

\def\cY{\mathcal Y}

\def\cF{\mathcal F}

\def\cShat{\widehat {\mathcal S}}

\title{\Large{\textbf{ Continuous Stochastic Flows Driven by White Noise and Their Duals} }}
\author{Yaolin Yu\footnote{\scriptsize SMS, Fudan University, China, \texttt{ylyu23@m.fudan.edu.cn}}}
\date{}

\begin{document}

\maketitle

\begin{abstract}
We study a class of continuous stochastic flows driven by a space-time white noise and characterize their dual flows by explicit stochastic differential equations. A key ingredient of the proof is the convergence of solutions under coefficient approximations. As an application, we derive the dual flows in two illustrative examples, the squared Bessel flow and the Jacobi flow. We also introduce a new model of polynomially self-repelling (PSR) flow and show that it enjoys a self-duality property.
\end{abstract}

{\bf Classification}. 60J65, 60J55, 60H10.

{\bf Keywords}. Stochastic flow; stochastic differential equation; white noise; duality.


\section{Introduction}\label{sec:intro}

Let $\cW$ be a standard space-time white noise on $\r^2$ (see, e.g., \cite{Walsh1986AnIT}). Let $b:\r^2 \to\r$ and $\sigma:\r^3 \to\r$ be Borel functions. We study a class of stochastic flows driven by $\cW$, where each flow line starting from $(a,r)\in \r^2$ is the trajectory $[r,\infty)\ni x\mapsto \cS_{r,x}(a)$ given by the solution to the stochastic differential equation (SDE)
\begin{align}\label{eq:SDE_gener}
	\cS_{r,x}(a) = a +\int_r^x \!\! \int_{\r} \sigma(\cS_{r,u}(a), u, s) \cW(\dd s, \dd u)+ \int_r^x b(\cS_{r,u}(a),u)  \dd u.
\end{align}
Define
\begin{equation*}
	\bar{\sigma}(x,t) := \int_{\r} \sigma(x,t,s)^2 \dd s.
\end{equation*}
Throughout this paper, we work under the following hypothesis on the coefficients.

\smallskip
\begin{assumption}\label{assu:H}
	The mapping $(x,t) \mapsto b(x,t) $ is continuous on $\r^2$, and $ \sigma $ is of constant sign on $ \r^3 $ (i.e., either nonnegative or nonpositive everywhere) with $ \bar{\sigma}(x,t) < \infty $ for all $ (x,t)\in\r^2 $. Furthermore, for any compact sets $ K_1, K_2 \subset \r $,
	\begin{align}\label{assu:conti_t}
		\sup_{x\in K_1}\sup_{t\in K_2}\int_\r \bigl(\sigma(x,t+h,s)-\sigma(x,t,s)\bigr)^2 \dd s \to 0
		\quad \text{as } h \to 0 .
	\end{align}
\end{assumption}

Before giving a rigorous definition of stochastic flows, we impose further structural assumptions on the coefficients $(\sigma,b)$. These assumptions are standard for SDEs~\eqref{eq:SDE_gener} and the forthcoming~\eqref{eq:back_flow} with possibly non-Lipschitz coefficients (see, e.g., \cite{SDE81,kara98}). 
For any interval $[T_1,T_2] \subset \r$, the constants $K_b$, $K_\sigma$ and the functions $\rho_m$, $r_m$ appearing in the assumptions below are taken to depend on $T_1$ and $T_2$.

\smallskip
\begin{enumerate}[label=\textnormal{(1.\alph*)}, nosep]
    \item There is a constant $K_b>0$ such that
    \begin{align*}
        |b(x,t)|\le K_b(1+|x|), \quad \forall x\in \r, t\in [T_1,T_2].
    \end{align*} \label{item:1a}
    \vspace{-1.2em}
    \item For each $m \geq 1$ there is a nonnegative nondecreasing function $\rho_m$ on $\r_{+}$ such that $\int_{0_+} \rho_m(z)^{-2} \dd z=\infty$, and for all $x, y \in [-m,m]$ and $t\in [T_1,T_2]$,
    $$
    \int_{\r}|\sigma(x, t, s)-\sigma(y, t, s)|^2 \dd s \leq \rho_m(|x-y|)^2.
    $$\label{item:1b}
    \vspace{-0.3em}
    \item For each $t\in \r$, $\bar{\sigma}(\cdot,t)\in C^1(\r)$ and there is a constant $K_\sigma>0$ such that
    \begin{align*}
        \left|\partial_x \bar{\sigma}(x,t)\right|\le K_{\sigma}(1+|x|),\quad  \forall x\in \r, t\in [T_1,T_2].
    \end{align*}\label{item:1c}
    \vspace{-1.2em}
    \item For each $m \geq 1$ there is a nondecreasing concave function $r_m$ on $\r_+$ such that $\int_{0_+} r_m(z)^{-1} \dd z=\infty$, and for all $x, y \in [-m,m]$ and $t\in [T_1,T_2]$,
    $$
    \left|b(x,t)-b(y,t)\right|+\left|\partial_x \bar{\sigma}(x,t)-\partial_y \bar{\sigma}(y,t)\right| \leq r_m(|x-y|).
    $$\label{item:1d}
\end{enumerate}

\vspace{-1em}
Under Assumption~\ref{assu:H} and the above conditions, Lemma~\ref{app:lem_conti} implies that the maps $(x,t)\mapsto \bar\sigma(x,t)$ and $(x,t)\mapsto -b(x,t)+\frac{1}{2}\partial_x\bar\sigma(x,t)$ are continuous. Moreover, conditions~\ref{item:1a} and~\ref{item:1c} ensure weak existence for both~\eqref{eq:SDE_gener} and~\eqref{eq:back_flow}; see \cite[Chapter~3, Section~3]{Skorokhod65}. Pathwise uniqueness is proved in Section~\ref{app:path_unique}. The Yamada--Watanabe theorem (see, e.g., \cite{YW71,strong75}) then yields unique strong solutions to both SDEs.

\begin{definition}\label{def:flow}
A \textbf{stochastic flow} driven by $\cW$ is a family of processes 
$$
\cS = \left\{\bigl(\cS_{r,x}(a),\, x \ge r\bigr)\right\}_{(a,r) \in \r^2}
$$
such that, for each $(a,r) \in \r^2$, the process $(\cS_{r,x}(a),\, x \ge r)$ is the almost sure unique strong solution to~\eqref{eq:SDE_gener}, and the following properties hold almost surely:
\begin{enumerate}[label=\textnormal{(R\arabic*)}, nosep]
    \item $\cS_{r,r}(a) = a$ for all $(a,r) \in \r^2$. \label{item:R1}
    \item For every $r \le x$, $a \mapsto \cS_{r,x}(a)$ is c\`adl\`ag. \label{item:R2}
    \item For any $(a_1,r_1), (a_2,r_2) \in \r^2$ and any $z \ge y \ge \max\{r_1,r_2\}$,
    $$
        \cS_{r_1,y}(a_1) < \cS_{r_2,y}(a_2) 
        \ \Longrightarrow\ 
        \cS_{r_1,z}(a_1) \le \cS_{r_2,z}(a_2).
    $$ \label{item:R3}
\end{enumerate}

\vspace{-1.2em} 
A stochastic flow is said to be \textbf{continuous} if, almost surely, all flow lines are continuous. 
It is said to be \textbf{unique} if any other stochastic flow associated with~\eqref{eq:SDE_gener} satisfying the above properties is equal to $\cS$ almost surely.
\end{definition}

The following theorem states the existence, uniqueness, and continuity of the associated stochastic flow. 
Existence and uniqueness follow from the stronger result in Theorem~\ref{thm:ex}, while continuity will be proved later in Proposition~\ref{prop:conti}.

\begin{theorem}\label{thm:ex_intro}
Suppose that $(\sigma,b)$ satisfy Assumption~\ref{assu:H} and conditions~\ref{item:1a}--\ref{item:1d}. Then the associated stochastic flow $\cS$ exists, is unique, and is continuous. 
\end{theorem}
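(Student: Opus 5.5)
We prove existence and uniqueness by an explicit construction on a countable skeleton, and continuity by a Kolmogorov-type estimate combined with monotonicity.

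\textbf{Construction on a dense set.} Fix the countable set $D=\mathbb{Q}^2$ of starting points. For each $(a,r)\in D$, pathwise uniqueness (Section~\ref{app:path_unique}) and weak existence give, via Yamada--Watanabe, a unique strong solution $(\cS_{r,x}(a))_{x\ge r}$ of \eqref{eq:SDE_gener}; all of these are defined on one probability space carrying $\cW$.

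\textbf{Ordering and coalescence.} Take two solutions started from $(a_1,r)$ and $(a_2,r)$ with $a_1\le a_2$. Their difference is a semimartingale with quadratic variation controlled by $\rho_m(|\cdot|)^2$ through \ref{item:1b}, and with drift difference controlled by $r_m$ through \ref{item:1d}. A Yamada--Watanabe/Le Gall local-time argument then shows that the local time at $0$ vanishes and that $\e[(\cS_{r,x}(a_1)-\cS_{r,x}(a_2))^+]=0$ after localization. Hence a.s. comparison holds: $\cS_{r,x}(a_1)\le \cS_{r,x}(a_2)$ for all $x$. Starting times may differ, $r_1\le r_2$. In that case, run the comparison from time $r_2$ with initial values $\cS_{r_1,r_2}(a_1)$ and $a_2$, using the strong Markov/flow property. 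The flow property itself follows from pathwise uniqueness, since $\cS_{r_1,\cdot}(a_1)$ restricted to $[y,\infty)$ solves the SDE from $(\cS_{r_1,y}(a_1),y)$. The same pathwise uniqueness shows that if two lines meet at some time, they coincide afterwards. Together these give \ref{item:R3} on $D$ almost surely, since $D$ is countable.

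\textbf{Extension to all starting points and uniqueness.} For general $(a,r)$, set
$$\cS_{r,x}(a):=\lim_{\substack{(a',r')\in D\\ a'\downarrow a,\; r'\uparrow r}}\cS_{r',x}(a'),$$
using suitable one-sided limits chosen so that \ref{item:R1}, \ref{item:R2} and \ref{item:R3} hold. Monotonicity guarantees the limits exist, and \ref{item:R2} holds by right-limit construction. We must also check that for each fixed $(a,r)$ the resulting process a.s. equals the strong solution. This requires continuity of the solution map in $L^1$ with respect to the initial data $(a,r)$. We obtain it from the same Yamada--Watanabe-type estimate, together with continuity in time supplied by \eqref{assu:conti_t} and Lemma~\ref{app:lem_conti}. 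Uniqueness then follows: any other flow agrees with $\cS$ a.s. on $D$, and \ref{item:R2}, \ref{item:R3} force agreement everywhere by monotone approximation.

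\textbf{Continuity (main obstacle).} We must show that all flow lines are continuous simultaneously, including the uncountably many defined by limits. Each individual line is continuous, but a limit of monotone continuous lines could jump. The plan is to use moment bounds: by \ref{item:1a} and BDG, $\e|\cS_{r,x}(a)-\cS_{r,y}(a)|^{p}\le C|x-y|^{p/2}$ locally uniformly in $(a,r)$, with $\bar\sigma$ bounded on compacts. Combined with the $L^1$ bounds $\e|\cS_{r,x}(a)-\cS_{r,x}(a')|\le |a-a'|+\int(\cdots)$ derived from the drift bound, this should give, through a Kolmogorov--Chentsov-type argument on the dyadic skeleton, a uniform modulus of continuity in $x$ for the lines started on $D$, locally uniform in the starting point. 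Monotone limits of equicontinuous paths are continuous, so every line inherits continuity. The delicate point is that $a\mapsto\cS_{r,x}(a)$ need not be continuous, since coalescence produces jumps. The modulus must therefore be in $x$ only and uniform over the countable family; we only need equicontinuity, not joint continuity in $a$.
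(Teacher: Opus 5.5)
Your route to \ref{item:R3} on the countable skeleton is sound and differs from the paper's. A Yamada--Watanabe/Le Gall comparison run from time $r_2$ on the $\mathcal F_{r_2}$-events $\{\cS_{r_1,r_2}(a_1)\lessgtr a_2\}$ gives non-crossing directly. The paper (Proposition~\ref{cor:non_cross}) instead smooths the coefficients (Lemma~\ref{lm:a1}), uses strict non-crossing of Kunita's homeomorphic flows, and passes to the limit via Proposition~\ref{thm:conv_appro_1}. It needs that machinery anyway for forward/backward non-crossing, where no comparison theorem applies, but for this theorem yours is lighter. Similarly, an $L^1$ estimate in $a$ (with BDG for the supremum in $x$) plus monotonicity can replace Proposition~\ref{thm:conv_appro_2}.

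The extension step, however, is wrong as written. Monotonicity controls $a'\mapsto\cS_{r',x}(a')$ only for fixed $r'$. Along $r'\uparrow r$ nothing is monotone, since the line from $(a',r')$ with $a'>a$ may lie below $a$ at time $r$. At the random exceptional points where lines from just above and just below $a$ at time $r$ separate, approach sequences passing $a$ on different sides produce different limits, so the limit need not exist. The "suitable one-sided limit" must be exactly the restriction to skeleton lines with $\cS_{r',r}(a')>a$, i.e.\ the infimum \eqref{eq:cS_D}. With that definition, \ref{item:R3} orders the family on $[r,\infty)$. \ref{item:R1} then needs density of $M(r)$ for all $r$ at once (Lemma~\ref{prop:density}). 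Identification with the strong solution must go through sandwiching it between skeleton lines and strong solutions from $(a_n,r)$, $a_n\downarrow a$. Your $L^1$-continuity cannot be applied along the random approach sequences themselves.

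The continuity step has a genuine gap. Kolmogorov--Chentsov turns joint moment bounds of order exceeding the parameter dimension into joint continuity. Since $a\mapsto\cS_{r,x}(a)$ jumps, no chaining in $a$ can work, and the $L^1$ bound in $a$ does not rescue it. The gap between the lines from $a$ and $a+h$ behaves like a nonnegative martingale started at $h$ and reaches $\varepsilon$ with probability of order $h/\varepsilon$. For BESQ-type noise $\sigma=2\cdot\mathbbm{1}_{\{0\le s\le x\}}$ the gap is exactly a $\mathrm{BESQ}^0$ process. So a union bound over the $\asymp h^{-1}$ adjacent pairs does not vanish: the gaps are genuinely macroscopic.

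The missing idea is to use \ref{item:R3} to \emph{sandwich} with lines restarted on short slabs. On each slab $[t,t+\delta]$, $t\in\delta\z$, every line is trapped between two skeleton lines started at time $t$ from grid points $O(\varepsilon)$ apart. Each of these moves less than $\varepsilon$ on the slab except with probability $C\delta^2\varepsilon^{-4}$, by \eqref{eq:lm_S}. A union bound over the $O(\varepsilon^{-1}\delta^{-1})$ grid points of a localized region then yields the uniform modulus.

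The paper needs no modulus at all. By \eqref{eq:cS_D} each line is an infimum of continuous functions, hence upper semicontinuous. Suppose $\lim_k\cS_{r,x_k}(a)<\cS_{r,x}(a)$ for some $x_k\to x$. Density of $M(x)$ supplies a continuous skeleton line strictly between these two values at time $x$. The line from $(a,r)$ would then cross it near $x$, contradicting \ref{item:R3} (Proposition~\ref{prop:conti}).
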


\bigskip

Let $\cS$ be the stochastic flow introduced in Definition~\ref{def:flow}, and suppose that it exists. For each $(b,r) \in \r^2$ and $x\le r$, define the backward line by
\begin{equation}\label{eq:def_dual}
    \cS^*_{r,x}(b) := \inf \{ a \in \r :\, \cS_{x,r}(a) > b \}.
\end{equation}
The collection $\cS^* = \{(\cS^*_{r,x}(b),\, x \le r)\}_{(b,r) \in \r^2}$ is called the \emph{dual flow} (or \emph{backward flow}) associated with $\cS$. 
A natural question is whether the family $\{(\cS^*_{-r,-x}(a),\, x \ge r)\}_{(a,r) \in \r^2}$ also constitutes a stochastic flow. The following theorem provides an affirmative answer, which is the main result of the paper.

\begin{theorem}\label{thm:backward_flow_SDE}
Suppose that $(\sigma,b)$ satisfy Assumption~\ref{assu:H} and conditions~\ref{item:1a}--\ref{item:1d}, and let $\cS$ be the associated stochastic flow. Then, for any $(a,r) \in \r^2$, $\cS^*_{-r,-\cdot}(a)$ is the almost sure unique strong solution to
\begin{align}\label{eq:back_flow}
    \nonumber \cS^*_{-r,-x}(a) 
    = a 
  &- \int_r^x\!\! \int_{\r} \sigma\big( \cS^*_{-r,-u}(a), -u, s \big) \cW^*(\dd s, \dd u) 
  - \int_r^x b\big( \cS^*_{-r,-u}(a), -u \big) \dd u \\
  &
  + \frac{1}{2} \int_r^x 
   \partial_x \bar\sigma \big( \cS^*_{-r,-u}(a), -u \big) \dd u, 
  \qquad x \ge r,
\end{align}
where $\cW^*$ is the image of $\cW$ under the map $(a, r) \mapsto(a,-r)$. Moreover, the collection
$\{(\cS^*_{-r,-x}(a),\, x \ge r)\}_{(a,r) \in \r^2}$ forms a stochastic flow. In particular, $(\cS^*)^*=\cS$.
\end{theorem}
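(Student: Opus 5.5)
The plan is to go through a smooth, finite-dimensional approximation in which duality can be checked by hand, and then pass to the limit; the abstract flags this as the key ingredient. Fix a time window $[T_1,T_2]$ and choose approximating coefficients $(\sigma_n,b_n)$ with three properties:
\begin{itemize}
\item $\sigma_n(x,t,s)=\sum_{k\le n}\sigma(x,t,s)\mathbbm{1}_{A_k}(s)$, truncated in $s$ and mollified in $(x,t)$, so that the noise reduces to finitely many Brownian motions;
\item $b_n$ is a mollification of $b$;
\item $(\sigma_n,b_n)$ satisfy \ref{item:1a}--\ref{item:1d} uniformly in $n$.
\end{itemize}

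For these smooth coefficients, \eqref{eq:SDE_gener} is a Kunita-type SDE driven by finitely many Brownian motions $B^k_u=\int_r^u\!\int \mathbbm{1}_{A_k}(s)\,\cW(\dd s,\dd v)$ with $C^{1,\alpha}$ coefficients. Hence $\cS^n$ is a flow of increasing diffeomorphisms, and $\cS^{n,*}_{r,x}$ is just the inverse map $(\cS^n_{x,r})^{-1}$. Kunita's backward-inverse formula gives the inverse flow as the solution of the backward SDE. In that formula the It\^o--Stratonovich correction produces exactly the drift
\begin{equation*}
-b_n+\tfrac12\partial_x\bar\sigma_n,
\qquad
\bar\sigma_n=\sum_k\Bigl(\int\sigma_n\mathbbm{1}_{A_k}\Bigr)^2\text{-type quantity}.
\end{equation*}
After the time reversal $u\mapsto -u$ the backward Brownian increments become those of $\cW^*$. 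So \eqref{eq:back_flow} holds for the approximations; this is the routine step.

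Next, let $n\to\infty$. The first goal is to show that for fixed $(a,r)$, $\cS^n_{r,\cdot}(a)\to\cS_{r,\cdot}(a)$ in probability, uniformly on compacts. I would prove this via a Yamada--Watanabe-type estimate using $\rho_m$ and $r_m$ (the same argument that gives pathwise uniqueness in Section~\ref{app:path_unique}), together with $L^2$ convergence $\sigma_n\to\sigma$ controlled through \eqref{assu:conti_t} and tightness from the linear growth bounds. The same convergence lemma, applied to the backward SDE \eqref{eq:back_flow} (whose coefficients satisfy the same hypotheses, with $-b+\frac12\partial_x\bar\sigma$ continuous by Lemma~\ref{app:lem_conti}), shows that the approximate duals converge to the unique strong solution $\widetilde\cS_{-r,-\cdot}(a)$ of \eqref{eq:back_flow}.

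The main obstacle is identifying this limit with $\cS^*$ as defined by the generalized inverse \eqref{eq:def_dual}. Convergence of inverses does not follow from pointwise convergence when the limit maps may have flat pieces or jumps. My approach rests on monotonicity, \ref{item:R2}--\ref{item:R3}, and continuity from Theorem~\ref{thm:ex_intro}.
\begin{enumerate}
\item For rational $a$ and a fixed $\varepsilon>0$, the relation $\cS^n_{x,r}(a)>b+\varepsilon$ eventually implies $\cS^{n,*}_{r,x}(b)\le a$.
\item Passing to the limit over a countable dense set of $a$, we obtain $\widetilde\cS\le\cS^*$ almost surely; the symmetric argument gives the reverse inequality.
\end{enumerate}
This yields $\cS^*_{r,x}(b)=\widetilde\cS_{-r,-x}(b)$ almost surely for each fixed $(b,r)$. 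One point needs care: by \ref{item:R3} the dual is automatically right-continuous and monotone in $b$, so the fixed-point identification upgrades to all points simultaneously.

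Finally, I would check that $\{\cS^*_{-r,-x}\}$ satisfies \ref{item:R1}--\ref{item:R3}:
\begin{itemize}
\item \ref{item:R1} follows from $\cS_{x,x}=\mathrm{id}$;
\item \ref{item:R2} holds because $b\mapsto\cS^*_{r,x}(b)$ is the right-continuous inverse of a nondecreasing map;
\item \ref{item:R3} follows from the flow (cocycle) property of $\cS$, which gives the cocycle property of the generalized inverses.
\end{itemize}
Uniqueness in Theorem~\ref{thm:ex_intro}, applied to \eqref{eq:back_flow}, then identifies $\cS^*$ with that unique flow. Applying the result twice, using $(\cW^*)^*=\cW$ and the fact that the correction terms $\pm\frac12\partial_x\bar\sigma$ cancel, shows $(\cS^*)^*$ solves \eqref{eq:SDE_gener}, so $(\cS^*)^*=\cS$ by uniqueness.
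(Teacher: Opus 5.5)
Your overall architecture (smooth approximations, Kunita's inverse-flow SDE for them, then convergence of forward and backward lines) is the paper's. Two steps, however, do not go through as written.

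\textbf{Identifying the limit with $\cS^*$.} Write $\widetilde{\cS}_{r,x}(b)$, $x\le r$, for the limiting backward line. Passing to the limit in strict inequalities at the endpoint times $x$ and $r$ only yields, for fixed $(b,r,x)$, $\sup\{a:\cS_{x,r}(a)<b\}\le\widetilde{\cS}_{r,x}(b)\le\inf\{a:\cS_{x,r}(a)>b\}$.
\begin{itemize}
\item Your step~1 gives the right-hand bound.
\item Exchanging the roles of $\cS$ and $\widetilde{\cS}$ reproduces exactly the same bound, since it is the contrapositive of step~1.
\item The mirrored argument, starting from $\cS_{x,r}(a)<b$, gives only the left-hand bound.
\end{itemize}
The two bounds differ precisely when $a\mapsto\cS_{x,r}(a)$ equals $b$ on a nondegenerate interval, which is the coalescing situation the theorem is about.

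At a fixed $x$ you could repair this by applying the lower bound at countably many $b'\downarrow b$ and using \ref{item:R2} for $\widetilde{\cS}$. This still gives equality only for countably many $x$, because each $x$ needs its own forward lines started at time $x$. The theorem, however, needs a.s.\ equality along the whole path $x\mapsto\cS^*_{r,x}(b)$, and for all $(b,r,x)$ at once to conclude that $\cS^*$ is a flow. Nothing in your plan gives path regularity of $x\mapsto\cS^*_{r,x}(b)$. Invoking uniqueness from Theorem~\ref{thm:ex_intro} does not help either: it presupposes that the lines of $\cS^*$ from dyadic points are already strong solutions of \eqref{eq:back_flow} as processes, which is what is to be shown. (Also, only the inequality \eqref{eq:one_per_flow} is available, not an exact cocycle identity at all points.)

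The paper never takes limits of inverses. It passes to the limit only in the strict non-crossing between forward and backward lines of the homeomorphic approximations started at dyadic points, which yields Proposition~\ref{prop:non-cros} at all times. It then proves $\cS^{\mathrm b}_{r,x}(b)=\inf\{a:\cS_{x,r}(a)>b\}$ for all $(b,r,x)$ on a single full-probability event. The tools are the infimum representations \eqref{eq:cS_D} and \eqref{eq:cS_b}, which use dyadic lines started strictly earlier (resp.\ later) and strictly above, together with Lemma~\ref{prop:density}. Because every comparison is then strict, flat stretches cause no trouble.

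\textbf{The approximation.} Passing to the limit in the backward SDE requires $\partial_x\bar\sigma_n\to\partial_x\bar\sigma$ locally uniformly with uniform linear growth, i.e.\ \eqref{eq:assume_sig_partial} and \eqref{eq:assume_lin_bound_1}. Uniform \ref{item:1a}--\ref{item:1d} bounds do not give this, and neither does truncating in $s$ and mollifying in $x$.
\begin{itemize}
\item The difference $\eta_n*\bar\sigma-\int(\eta_n*\sigma)^2\,\dd s$ is a local-variance term that tends to $0$, but nothing in \ref{item:1b}--\ref{item:1d} controls its $x$-derivative.
\item The truncated mass $\int_{|s|\le m}\sigma^2\,\dd s$ need not even be differentiable. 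For instance, the part of $\sigma$ with $|s|\le m$ can have squared norm $|x|$, compensated by a part with $|s|>m$.
\end{itemize}
This is why Lemma~\ref{lm:C1} adds a compensating noise component on $s\in[m+1,m+2]$ with squared mass $\eta_k*(C_{n,m}-f_m)$. On compacts this gives $\bar\sigma_n=\bar\sigma+C_{n,m}(t)+(f_m-\eta_k*f_m)$, where the last term is small in $C^1$ for large $k$.

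Finally, the reduction to finitely many Brownian motions is unnecessary, since Kunita's theory applies directly to the white-noise-driven field (Proposition~\ref{thm:Brownian_flow_back}). In any case $\sum_k\sigma\mathbbm{1}_{A_k}$ would not achieve that reduction unless $\sigma$ were replaced by its cell averages in $s$.
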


As will be reviewed in Section~\ref{sec:homeo}, Kunita~\cite{Ku90} shows that for a flow of homeomorphisms—arising under suitable smoothness assumptions on the coefficients $(\sigma,b)$—the associated backward flow satisfies the SDE~\eqref{eq:back_flow}. In this setting, Theorem~\ref{thm:backward_flow_SDE} follows directly from Kunita's theory; our result extends this beyond the homeomorphic case to the broader class of coalescing flows.
Our approach, based on approximating coalescing flows by flows of homeomorphisms, is inspired by techniques appearing in the work of Burdzy and Chen~\cite{Burdzy_Chen} and A\"{\i}d\'{e}kon et al.~\cite{2024meetingsquaredbesselflow} in the context of skew Brownian flows.

While our construction is formulated on $\r^2$, future applications require a more flexible framework. For instance, the local time flows studied in~\cite[Section~2]{elie23} are naturally supported on the half-plane $\h:=\r_+\times\r$. This motivates us to consider stochastic flows on strip-like domains with general boundary behavior. In Section~\ref{sec:strip}, we develop this extension and establish the corresponding duality relations.

\bigskip

To illustrate our general result, we examine two well-known examples: the squared Bessel (BESQ) flow and the Jacobi flow (see Theorem~\ref{thm:back_BESQ}). The dual of a general BESQ$^\delta$ flow was identified in \cite[Proposition 2.7]{elie23} via an embedding into the perturbed reflecting Brownian motion (PRBM). We present an alternative approach and verify that it recovers the same dual flow. 
The second example concerns the Jacobi flow on $(0,1)$, as studied in A\"{\i}d\'{e}kon et al.~\cite{elie23}. To the best of our knowledge, its dual flow has not been explicitly identified before, and we give its precise characterization in this paper.

Beyond these examples, we introduce, for each $\alpha>0$, a new model of stochastic flow called the \textbf{polynomially self-repelling flow with index $\alpha$ (PSR$^\alpha$ flow)}. The terminology is motivated by its connection to the scaling limit of the polynomially self-repelling walk studied in \cite{Elena23, Toth96}. Although this connection will be explored in forthcoming work, our focus here is to establish the existence and uniqueness of this flow and derive an explicit SDE for its backward flow, leading to a self-duality property.

\paragraph{Notation.}
Let $\bD$ denote the set of dyadic numbers, i.e., numbers of the form $k2^{-n}$ for $k \in \mathbb{Z}$ and $n \in \mathbb{N}$, and let $\bD_+$ denote the set of positive dyadic numbers. Set $\r_+ := [0,\infty)$ and write $\h := \r_+ \times \r$ for the half-plane. We use $a \vee b := \max\{a,b\}$ and $a \wedge b := \min\{a,b\}$.

\paragraph{Related works} 
Le Jan and Raimond~\cite{le_jan} studied coalescing flows and flows of probability kernels, going beyond the flows of diffeomorphisms. Our SDE~\eqref{eq:SDE_gener} can be seen as a time-inhomogeneous, jump-free variant of Dawson and Li~\cite[Equation (2.1)]{Dawson12}.
Dual objects associated with flows arise in multiple contexts. Fontes et al.~\cite{Fontes2003TheBW} showed that the dual of the Brownian web is itself a Brownian web. Bertoin and Le Gall~\cite{BL03,BL05} showed that generalized Fleming–Viot processes are dual to flows of bridges associated with exchangeable coalescents. In particular, \cite{BL05} proved a distributional Jacobi$(2,2)$/Jacobi$(0,0)$ duality between the inverse and forward motions. The formulation~\eqref{eq:def_dual} also appeared in the construction of the inverse of continuous-state branching processes, as discussed by Foucart et al.~\cite{Foucart2018CoalescencesIC}, where the notion of Siegmund duality was first developed by Siegmund~\cite{Siegmund1976TheEO}; see also Clifford and Sudbury~\cite{CS85}. More recently, duality has played an important role in stochastic flows driven by white noise. A\"{\i}d\'{e}kon et al.~\cite{elie23,2024meetingsquaredbesselflow} studied the dual of the local time flow and used it to study bifurcation events, namely points where multiple flow lines emerge.

\paragraph{Structure of the paper}
In Section~\ref{sec:homeo}, we introduce stochastic flows of homeomorphisms together with their associated backward flows. Section~\ref{sec:appro} establishes convergence results for coefficient and initial-condition approximations. The proofs of the main results, Theorems~\ref{thm:ex_intro} and~\ref{thm:backward_flow_SDE}, except for continuity, are given in Section~\ref{sec:exist}. Section~\ref{sec:strip} studies stochastic flows on strips with various boundary conditions, including their continuity properties. In Section~\ref{sec:apply}, we apply the main results to two examples of dual flows and introduce the PSR flow. Finally, Appendix~\ref{app:conv} contains auxiliary lemmas for the convergence results in Section~\ref{subsec:conv_re}, while Appendix~\ref{app:appro} details the construction of the coefficient approximations.

\paragraph{Acknowledgments} We are grateful to Dongjian Qian and Chengshi Wang for helpful discussions, and to Elie A\"{\i}d\'{e}kon for carefully reading the entire manuscript.


\section{Stochastic flow of homeomorphisms}\label{sec:homeo}

For each fixed $(a,r) \in \r^2$, the strong solution $\bigl(\cS_{r,x}(a),\, x \ge r\bigr)$ to~\eqref{eq:SDE_gener} is a semimartingale with respect to the filtration generated by $\cW$. 
Define a continuous semimartingale
\begin{align}\label{eq:F}
    F(x,t):=\int_r^t\!\! \int_{\r} \sigma(x,u,s)\cW(\dd s, \dd u)+\int_r^t b(x,u) \dd u,
\end{align}
so that the SDE~\eqref{eq:SDE_gener} can be equivalently written as
\begin{align}\label{eq:SDE_gener_var}
    \cS_{r,x}(a)=a+\int_r^x F(\cS_{r,u}(a), \dd u), \quad x\ge r.
\end{align}
\noindent
 Let $(a(x,y,t), b(x,t), A_t)$ denote the local characteristic of $F(x,t)$ in the sense of \cite[Section 3.2]{Ku90}. They are given explicitly by
\begin{equation}\label{eq:local_char}
    \bigl(a(x,y,t),\, b(x,t),\, A_t\bigr)
    = \left( \int_{\r} \sigma(x,t,s)\sigma(y,t,s)\dd s,\, b(x,t),\, t \right).
\end{equation}

In this section, we recall the definition of a Brownian flow from \cite[Section~4]{Ku90} and, under suitable regularity assumptions on the coefficients $\sigma$ and $b$ in~\eqref{eq:SDE_gener}, state the SDE satisfied by the backward flow of homeomorphisms.

\subsection{Regularity conditions on local characteristics} 

We impose the following regularity conditions on $\sigma$ and $b$. Let $a$ be defined in~\eqref{eq:local_char}.

\begin{assumption}\label{assu:A}
Here, $K$ denotes a finite positive constant that may vary from line to line, depending on the interval $[T_1,T_2]\subset \r$.

\smallskip
\begin{enumerate}[label=(\textbf{A}\arabic*), nosep]
    \item The function $b$ satisfies condition~\ref{item:1a}, $b(\cdot, t) \in C^1(\r)$ for each $t\in \r$, and for all $x,y\in \r$ and $t \in [T_1,T_2]$,
    \begin{gather*}
      |\partial_x b(x,t)|\le K,\qquad |\partial_x b(x,t) - \partial_y b(y,t)| \le K |x - y|.
    \end{gather*}\label{item:A1}
    \vspace{-1.2em}

    \item The function $\sigma: \r^3 \to \r$ is smooth in the first variable, and for all $x,y\in \r$, $t \in [T_1,T_2]$ and $\alpha\in \{1,2,3\}$,
    \begin{gather*}
    \frac{|a(x,y,t)|}{(1 + |x|)(1 + |y|)} \le K, \qquad
    \int_\r \bigl(\partial_x^\alpha\sigma(x,t,s)\bigr)^2\dd s \le K.
    \end{gather*}\label{item:A2}
\end{enumerate}    
\end{assumption}

\vspace{-1.8em}
\begin{lemma}\label{lm:es_conA}
Under Assumptions~\ref{assu:H} and~\ref{assu:A}, the following properties hold.
\vspace{-0.5em}
\begin{enumerate}[label=(\roman*)]
    \item For any $x,y,t\in \r$ and $\alpha,\beta\in \{1,2\}$,
    $$
    \partial_x^\alpha \partial_y^\beta a(x,y,t) = \int_\r \partial_x^\alpha \sigma(x,t,s) \partial_y^\beta \sigma(y,t,s)\dd s.
    $$
    In particular, $\partial_x \bar{\sigma}(x,t) = 2 \int_\r \sigma(x,t,s)\partial_x\sigma(x,t,s)\dd s $.
    \vspace{-0.5em}
        
    \item The functions $a$ and $b$ belong to the classes $\widetilde{C}^{2,1}_{ub}$ and $C^{1,1}_{ub}$, respectively; see \cite[pp.~334--335]{Ku90} for the definitions of these spaces.
    \vspace{-0.5em}
        
    \item The coefficients $(\sigma,b)$ satisfy conditions~\ref{item:1a}--\ref{item:1d}.
\end{enumerate}
\end{lemma}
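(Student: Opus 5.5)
For (i), I will differentiate under the integral sign in $a(x,y,t)=\int_\r \sigma(x,t,s)\sigma(y,t,s)\,\dd s$, viewing $x\mapsto\sigma(x,t,\cdot)$ as a map into $L^2(\r)$. By \ref{item:A2}, $\int_\r(\partial_x^\alpha\sigma(x,t,s))^2\dd s\le K$ for $\alpha=1,2,3$, uniformly in $x$ and in $t$ on compacts. The first step is to show that $x\mapsto\partial_x^{\alpha}\sigma(x,t,\cdot)$ is Fréchet differentiable in $L^2$ with derivative $\partial_x^{\alpha+1}\sigma(x,t,\cdot)$ for $\alpha=0,1,2$. Taylor's formula with integral remainder gives
\[
\partial_x^\alpha\sigma(x+h,t,s)-\partial_x^\alpha\sigma(x,t,s)-h\,\partial_x^{\alpha+1}\sigma(x,t,s)=\int_0^h (h-v)\,\partial_x^{\alpha+2}\sigma(x+v,t,s)\,\dd v .
\]
Minkowski's integral inequality then bounds its $L^2(\dd s)$ norm by $\tfrac{h^2}{2}\sqrt K$ when $\alpha+2\le 3$, that is, for $\alpha\le1$. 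For $\alpha=2$ I would only use the first-order version, which gives $L^2$-Lipschitz continuity of $\partial_x^2\sigma$ with constant $\sqrt K$.

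Since the inner product $L^2\times L^2\to\r$ is bilinear and continuous, the map $(x,y)\mapsto\langle \sigma(x,t,\cdot),\sigma(y,t,\cdot)\rangle$ has the claimed mixed partials for $\alpha,\beta\in\{1,2\}$. The formula for $\partial_x\bar\sigma$ follows by setting $x=y$ and applying the chain rule, which gives $\partial_x\bar\sigma=2\langle\sigma,\partial_x\sigma\rangle$. Finiteness of $\|\sigma(x,t,\cdot)\|_{L^2}$ is part of Assumption~\ref{assu:H}.

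For (ii), I will check Kunita's definitions directly. For $\widetilde C^{2,1}_{ub}$, the required bound $|a(x,y,t)|\le K(1+|x|)(1+|y|)$ is given in \ref{item:A2}. The derivatives from (i) satisfy $|\partial_x^\alpha\partial_y^\beta a|\le\|\partial^\alpha\sigma\|_2\|\partial^\beta\sigma\|_2\le K$ by Cauchy–Schwarz. Their Lipschitz continuity in $(x,y)$ comes from the $L^2$-Lipschitz property of $\partial_x^\alpha\sigma$ for $\alpha\le2$, which uses the third derivative bound. The first derivatives of $a$ have linear growth, since $\|\sigma(x)\|_2\le\|\sigma(0)\|_2+\sqrt K|x|$. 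For $b$, the class $C^{1,1}_{ub}$ needs linear growth, which is \ref{item:1a}, together with a bounded and Lipschitz first derivative, which is \ref{item:A1}. The main obstacle here is purely bookkeeping: matching Kunita's exact norms, including whether the first-derivative bounds are required to be bounded or only of linear growth. I will also check that all constants are uniform on $[T_1,T_2]$, which holds because $K$ is.

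For (iii), I will verify the four conditions in turn.
\begin{itemize}
\item \ref{item:1a} is assumed in \ref{item:A1}.
\item For \ref{item:1b}, the bound $\|\sigma(x)-\sigma(y)\|_2\le\sqrt K|x-y|$ allows the choice $\rho_m(z)=\sqrt K z$, and then $\int_{0+}z^{-2}\dd z=\infty$.
\item For \ref{item:1c}, differentiability of $\bar\sigma$ follows from (i). Cauchy–Schwarz gives $|\partial_x\bar\sigma|\le2\|\sigma(x)\|_2\|\partial_x\sigma(x)\|_2\le 2\sqrt K(\|\sigma(0,t)\|_2+\sqrt K|x|)$. I need $\sup_{t\in[T_1,T_2]}\bar\sigma(0,t)<\infty$, which follows from the bound on $a(0,0,t)$ in \ref{item:A2}. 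Continuity in $x$ of $\partial_x\bar\sigma$ holds by the $L^2$-continuity established above, so $\bar\sigma(\cdot,t)\in C^1$.
\item For \ref{item:1d}, $b$ is $K$-Lipschitz by \ref{item:A1}. On $[-m,m]$, $\partial_x\bar\sigma$ is Lipschitz with constant $2\sqrt K(\|\sigma(x)\|_2+\|\partial\sigma\|_2)\le C_m$, using the $L^2$-Lipschitz bounds on $\sigma$ and $\partial_x\sigma$. The choice $r_m(z)=C_m' z$ is concave and satisfies $\int_{0+}z^{-1}\dd z=\infty$.
\end{itemize}
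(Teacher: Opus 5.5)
Your proposal is correct in substance and follows the paper's strategy. Both arguments move the $x$-derivatives inside the $s$-integral using the $L^2(\dd s)$ bounds of \ref{item:A2}, and control everything by Cauchy--Schwarz and Minkowski. In (i) the difference is only packaging. You prove Fr\'echet differentiability of $x\mapsto\partial_x^\alpha\sigma(x,t,\cdot)$ in $L^2(\r)$ and use bilinearity of the inner product, which gives all mixed partials at once, including the $\beta=0$ case needed for $\partial_x\bar\sigma$. The paper instead treats one derivative at a time via Fubini, the fundamental theorem of calculus and continuity of the candidate derivative. Similarly, your direct Minkowski bound for \ref{item:1b} replaces the paper's identity \eqref{ineq:bo}.

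One step in (ii) aims at the wrong condition. The H\"older part of Kunita's $\widetilde C^{2,1}$ norm is not Lipschitz continuity of $\partial_x^2\partial_y^2 a$ in $(x,y)$. It is the rectangular-increment bound
\[
\bigl|\partial_x^2\partial_y^2 a(x,y,t)-\partial_x^2\partial_y^2 a(x',y,t)-\partial_x^2\partial_y^2 a(x,y',t)+\partial_x^2\partial_y^2 a(x',y',t)\bigr|\le K|x-x'|\,|y-y'|.
\]
Lipschitz continuity alone does not give this: for $(x,y)\mapsto|x-y|$ with $x=y=0$ and $x'=y'=h$, the double increment is $2h$, not $O(h^2)$. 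Your ingredients do give it immediately. By (i), the left-hand side equals
\[
\Bigl|\int_\r\bigl(\partial_x^2\sigma(x,t,s)-\partial_x^2\sigma(x',t,s)\bigr)\bigl(\partial_y^2\sigma(y,t,s)-\partial_y^2\sigma(y',t,s)\bigr)\dd s\Bigr|.
\]
Cauchy--Schwarz together with your $L^2$-Lipschitz bound for $\partial_x^2\sigma$ (constant $\sqrt K$) then gives $K|x-x'|\,|y-y'|$, which is exactly the paper's computation.

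Separately, your (ii) only records that constants are uniform in $t\in[T_1,T_2]$. The paper also proves joint continuity of $a$ in $(x,y,t)$, using \eqref{assu:conti_t} for the time increment and \eqref{ineq:bo} for the spatial one. You should include that step, or at least measurability in $t$, before concluding $a\in\widetilde C^{2,1}_{ub}$.
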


\begin{proof}
    (i) We treat only the case $\alpha=1$ and $\beta=0$, as higher-order derivatives follow analogously. For $h>0$, 
    \begin{align*}
        a(x+h,y,t)-a(x,y,t)=&\, \int_\r \bigl(\sigma(x+h,t,s)-\sigma(x,t,s)\bigr)\sigma(y,t,s) \dd s\\
        =&\, \int_\r \int_x^{x+h} \partial_\xi \sigma(\xi,t,s) \dd \xi \, \sigma(y,t,s) \dd s.
    \end{align*}
    By the Cauchy--Schwarz inequality and~\ref{item:A2} (which entails $\bar{\sigma}(y,t) \le K(1+|y|)^2$),
    \begin{align*}
        \int_\r \int_x^{x+h} |\partial_\xi \sigma(\xi,t,s) \sigma(y,t,s)| \dd \xi \dd s 
        &\le \int_x^{x+h} \Bigl( \int_\r |\partial_\xi \sigma|^2 \dd s \cdot \bar{\sigma}(y,t) \Bigr)^{1/2} \dd \xi \le K h (1+|y|).
    \end{align*}
    This justifies the use of Fubini's theorem. Hence,
    \begin{align*}
        a(x+h,y,t)-a(x,y,t)=\int_x^{x+h}\!\!\int_\r \partial_\xi \sigma(\xi,t,s) \sigma(y,t,s) \dd s\dd \xi.
    \end{align*}
    To conclude via the fundamental theorem of calculus, we only need the continuity of $\xi\mapsto \int_\r \partial_\xi \sigma(\xi,t,s) \sigma(y,t,s) \dd s$. For any $\xi_1<\xi_2$, applying Cauchy--Schwarz again gives
    \begin{align*}
        \left|\int_\r \bigl(\partial_\xi\sigma(\xi_1,t,s)- \partial_\xi\sigma(\xi_2,t,s)\bigr) \sigma(y,t,s) \dd s\right|^2\le&\; 
        \bar\sigma(y,t) \int_\r \Bigl(\int_{\xi_1}^{\xi_2} \left|\partial_\xi^2 \sigma(\xi,t,s)\right|\dd \xi\Bigr)^2\dd s \\
        \le &\; \bar\sigma(y,t) (\xi_2-\xi_1) \int_{\xi_1}^{\xi_2}\! \int_\r  \left|\partial_\xi^2 \sigma(\xi,t,s)\right|^2 \dd s \dd \xi\\
        \le &\; K^2(1+|y|)^2 (\xi_2-\xi_1)^2,
    \end{align*}
    where the last bound uses~\ref{item:A2}. The continuity follows immediately.

    \smallskip 
    (ii) By (i), for each fixed $t$, the derivatives $\partial_x^\alpha \partial_y^\beta a(\cdot,\cdot,t)$ exist and are continuous for $\alpha,\beta\le 2$, hence $a(\cdot,\cdot,t)\in C^2(\r^2)$. Applying Cauchy--Schwarz together with $\int_\r (\partial_x^\alpha \sigma)^2 \dd s \le K$ for $\alpha\le 2$ from~\ref{item:A2}, we deduce that $|\partial_x \partial_y a(x,y,t)|$ and $|\partial_x^2 \partial_y^2 a(x,y,t)|$ are uniformly bounded by $K$ for all $x,y\in\r$ and $t\in[T_1,T_2]$. 
    Using the bound for $\alpha=3$ in~\ref{item:A2} and Minkowski's inequality, for any $x\ne x'$, $y\ne y'$ and $t\in[T_1,T_2]$,
    \begin{align*}
    &\; |\partial_x^2 \partial_y^2 a(x,y,t) - \partial_x^2 \partial_y^2 a(x',y,t) - \partial_x^2 \partial_y^2 a(x,y',t) + \partial_x^2 \partial_y^2 a(x',y',t)| \\
    = &\; \left| \int_\r \left( \int_{x'}^x \partial_u^3 \sigma(u,t,s) \dd u \right) \left( \int_{y'}^y \partial_v^3 \sigma(v,t,s) \dd v \right) \dd s \right| \\
    \le &\; \left( \int_\r \left( \int_{x'}^x \partial_u^3 \sigma(u,t,s) \dd u \right)^2 \dd s \right)^{\!1/2} \left( \int_\r \left( \int_{y'}^y \partial_v^3 \sigma(v,t,s) \dd v \right)^2 \dd s \right)^{\!1/2} \\
    \le &\; \left| \int_{x'}^x \left( \int_\r \bigl(\partial_u^3 \sigma(u,t,s)\bigr)^2 \dd s \right)^{\!1/2} \dd u \right| \left| \int_{y'}^y \left( \int_\r \bigl(\partial_v^3 \sigma(v,t,s)\bigr)^2 \dd s \right)^{\!1/2} \dd v \right| \\
    \le &\; K |x - x'| |y - y'|.
    \end{align*}
    Moreover, by the uniform bound $|\partial_x \partial_y a|\le K$,
    \begin{align}\label{ineq:bo}
        \nonumber \int_{\r} |\sigma(x,t,s) - \sigma(y,t,s)|^2 \dd s 
        &= a(x,x,t) - 2a(x,y,t) + a(y,y,t) \\
        &= \int_y^x \int_y^x \partial_u \partial_v a(u,v,t) \dd u \dd v\le K |x-y|^2.
    \end{align}

    We next prove the continuity of $a$. For $(x,y,t),(x',y',t')\in \r^2\times[T_1,T_2]$,
    \begin{align}\label{eq:diff_a}
        |a(x,y,t)-a(x',y',t')|\le |a(x,y,t)-a(x,y,t')|+|a(x,y,t')-a(x',y',t')|.
    \end{align}
    Let $\delta(z;t,t') := \int_\r (\sigma(z,t,s) - \sigma(z,t',s))^2 \dd s$. By Cauchy--Schwarz, 
    \begin{align*}
        |a(x,y,t) - a(x,y,t')| \le \sqrt{K}(1+|x|)\sqrt{\delta(y; t, t')} + \sqrt{K}(1+|y|)\sqrt{\delta(x; t, t')},
    \end{align*}
    which vanishes as $t\to t'$ by~\eqref{assu:conti_t}. For the second term on the right-hand side of~\eqref{eq:diff_a}, applying~\eqref{ineq:bo} and the Cauchy--Schwarz inequality yields
    \begin{align*}
        |a(x,y,t') - a(x',y',t')| &\le \int_\r |\sigma(x,t',s)| |\sigma(y,t',s) - \sigma(y',t',s)| \dd s \\
        &\quad + \int_\r |\sigma(y',t',s)| |\sigma(x,t',s) - \sigma(x',t',s)| \dd s \\
        &\le \sqrt{K}(1+|x|) \cdot \sqrt{K} |y - y'| + \sqrt{K}(1+|y'|) \cdot \sqrt{K} |x - x'|.
    \end{align*}
    As $(x,y,t) \to (x',y',t')$, each component converges to $0$, ensuring the joint continuity of $a$. Hence $a \in \widetilde{C}^{2,1}_{ub}$. Finally, $b \in C^{1,1}_{ub}$ follows directly from~\ref{item:A1} and the continuity of $b$.

    \smallskip
    (iii) Assumption~\ref{item:A1} implies \ref{item:1a} and the bound for $b$ in~\ref{item:1d}. By~\eqref{ineq:bo}, condition~\ref{item:1b} holds with $\rho_m(z) := \sqrt{K} z$. Applying the Cauchy--Schwarz inequality and~\ref{item:A2} yields $|\partial_x \bar{\sigma}| = 2 \left| \int_\r \sigma \partial_x\sigma\dd s \right| \le 2K(1+|x|)$, which verifies condition~\ref{item:1c}. Finally, $a(\cdot,\cdot,t) \in C^2(\r^2)$ ensures $\bar{\sigma}(\cdot,t) \in C^1(\r)$, so that $\partial_x \bar{\sigma}$ is locally Lipschitz and \ref{item:1d} follows.
\end{proof}

\subsection{Brownian flows}

Recall from \cite[Section 4.1]{Ku90} that a Brownian flow $\cS_{r,x}$ is a stochastic flow of homeomorphisms satisfying the following properties almost surely:
\begin{enumerate}[label=(\roman*), nosep]
    \item $\cS_{r,y}=\cS_{x,y}\circ \cS_{r,x}$ holds for all $r,x,y$.
    \item $\cS_{r,r}$ is the identity map for all $r$.
    \item $\cS_{r,x}:\r \to \r$ is an onto homeomorphism for all $r,x$.
    \item For any $0\le t_0<t_1<\cdots<t_n$, $\cS_{t_i,t_{i+1}}$, $i=0,\ldots, n-1$, are independent.
\end{enumerate}

\noindent
For such a Brownian flow $\cS_{r,x}$, the infinitesimal mean and infinitesimal covariance (when they exist) are given respectively by
\begin{gather*}
    \lim_{h\rightarrow 0+} \frac{1}{h} \bigl(\e[\cS_{r,r+h}(x)]-x \bigr),\qquad
    \lim_{h\rightarrow 0+} \frac{1}{h}\, \e\! \left[(\cS_{r,r+h}(x)-x)(\cS_{r,r+h}(y)-y) \right].
\end{gather*}

\begin{proposition}\label{thm:Brownian_flow_for}
    Under Assumptions~\ref{assu:H} and~\ref{assu:A}, the flow governed by~\eqref{eq:SDE_gener_var} is a Brownian flow with infinitesimal mean $b$ and covariance $a$, as specified in~\eqref{eq:local_char}.
\end{proposition}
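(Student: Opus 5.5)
The plan is to reduce the statement to Kunita's general theory of stochastic flows generated by semimartingales with given local characteristics, as in \cite[Sections~3.4 and~4.2--4.5]{Ku90}. The first step is to check that $F(x,t)$ in~\eqref{eq:F} is a continuous $C$-valued Brownian motion in Kunita's sense, i.e.\ a continuous semimartingale with independent increments, whose local characteristic is given by~\eqref{eq:local_char}. For each fixed $x$, the white-noise integral $\int_r^t\int_\r \sigma(x,u,s)\cW(\dd s,\dd u)$ is a continuous Gaussian martingale. Its joint quadratic covariation with the integral at $y$ is $\int_r^t a(x,y,u)\,\dd u$, which is deterministic by the isometry for Walsh integrals. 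Hence the martingale parts have independent increments. The drift $\int_r^t b(x,u)\,\dd u$ is deterministic and continuous. To obtain a version of $F$ that is continuous, indeed $C^{1}$ in $x$, I would apply Kolmogorov's criterion to the Gaussian field $(x,t)\mapsto F(x,t)$ and its $x$-derivatives. The required increment bounds come from~\eqref{ineq:bo}, the analogous estimate for $\partial_x\sigma$ given by~\ref{item:A2} with $\alpha=2,3$, and the time continuity~\eqref{assu:conti_t}.

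The second step is to invoke Lemma~\ref{lm:es_conA}(ii), which gives $a\in\widetilde C^{2,1}_{ub}$ and $b\in C^{1,1}_{ub}$. These are exactly the hypotheses of Kunita's theorem \cite[Theorem~4.5.1]{Ku90}, or equivalently Theorem~3.4.7 combined with Section~4.2. That theorem asserts that the SDE $\cS_{r,x}(a)=a+\int_r^x F(\cS_{r,u}(a),\dd u)$ has a solution which defines a forward stochastic flow of $C^{1}$-diffeomorphisms. Since $F$ has independent increments, this flow is a Brownian flow whose infinitesimal mean and covariance equal $b$ and $a$. The flow properties (i)--(iii) come from the theorem itself. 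Property (iv) follows because $\cS_{t_i,t_{i+1}}$ is measurable with respect to the increments of $F$ on $[t_i,t_{i+1}]$, hence with respect to $\cW$ restricted to $\r\times[t_i,t_{i+1}]$, and these restrictions are independent for disjoint time strips.

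Third, I would identify the flow with the strong solutions from Definition~\ref{def:flow}. By Lemma~\ref{lm:es_conA}(iii), conditions~\ref{item:1a}--\ref{item:1d} hold, so~\eqref{eq:SDE_gener} has pathwise uniqueness. For each fixed $(a,r)$, Kunita's flow line therefore coincides almost surely with the unique strong solution, since~\eqref{eq:SDE_gener_var} is the same equation as~\eqref{eq:SDE_gener}. To rewrite $\int F(\cS_{r,u},\dd u)$ as a white-noise integral, I would use the definition of the Kunita--Itô integral as a limit of Riemann sums together with the linearity of $\sigma\mapsto\int\sigma\,\dd\cW$.

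The infinitesimal characteristics can also be checked directly. Taking expectations in the SDE gives $\e[\cS_{r,r+h}(x)]-x=\int_r^{r+h}\e[b(\cS_{r,u}(x),u)]\,\dd u$. The covariance is computed by Itô's isometry, which yields $\int_r^{r+h}\e[a(\cS_{r,u}(x),\cS_{r,u}(y),u)]\,\dd u$ plus drift cross terms of order $O(h^2)$. One then divides by $h$ and uses the continuity of $a$ and $b$, the linear growth bounds, and $L^2$-continuity of $u\mapsto\cS_{r,u}$ at $u=r$.

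I expect the main obstacle to be the first step: producing a version of $F$ that is regular enough in $x$ for Kunita's framework. In particular, the joint continuity in $t$ must be extracted from the integral condition~\eqref{assu:conti_t} rather than from pointwise continuity of $\sigma$. I would try Kolmogorov's criterion applied to the Gaussian increments, for which Gaussian moments of all orders make the $L^2$ bounds sufficient.
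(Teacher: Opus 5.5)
Your proposal is correct and follows the paper's overall strategy. Lemma~\ref{lm:es_conA}(ii) supplies the regularity of the local characteristic, independence of increments comes from the white-noise structure, and \cite[Theorem~4.5.1(ii)]{Ku90} then produces the Brownian flow.

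Where you differ is in verifying the spatial regularity of $F(\cdot,t)$. The paper compares the Gaussian fields $G$ and $H$ with Brownian motions via the Sudakov--Fernique inequality and concludes $\e[\sup_{K}|G|]<\infty$ and $\e[\sup_{K\times K}|H|]<\infty$. This is quick. As written, however, it bounds the size of $G$ and of its increments, not the difference quotients $|H(x,y)|/|x-y|$ that membership in $C^{0,1}$ (as in the paper's footnote) requires. The comparison metric $2CNt|x-y|$ is that of a Brownian motion, so it cannot by itself give Lipschitz control.

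Your route applies Kolmogorov's criterion to both $\int\!\!\int\sigma\,\dd\cW$ and $\int\!\!\int\partial_x\sigma\,\dd\cW$, with increment bounds from~\eqref{ineq:bo} and~\ref{item:A2}. A stochastic Fubini step then identifies the second field as the $x$-derivative of the first. This costs a little more, but it yields a version that is $C^1$ in $x$, hence genuinely locally Lipschitz, which is exactly what that class demands.

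Your extra steps are left implicit in the paper and are fine: identifying Kunita's flow lines with the strong solutions via pathwise uniqueness, and computing the infinitesimal mean and covariance directly. One small correction: the martingale--drift cross term in the covariance is $O(h^{3/2})$ rather than $O(h^2)$, which is still $o(h)$.

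Finally, the obstacle you anticipate is not one. Time-continuity of $F$ needs no appeal to~\eqref{assu:conti_t}, because the variance of the increment of $\int_r^{\cdot}\!\int_\r\sigma(x,u,s)\,\cW(\dd s,\dd u)$ over $[t',t]$ equals $\int_{t'}^{t}\bar\sigma(x,u)\,\dd u\le K(1+|x|)^2|t-t'|$. Condition~\eqref{assu:conti_t} enters only through the continuity of $a$ in $t$, which is already part of Lemma~\ref{lm:es_conA}(ii).
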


\begin{proof}
    By Lemma~\ref{lm:es_conA}(ii), to apply \cite[Theorem~4.5.1(ii)]{Ku90}, it remains to verify that $t\mapsto F(x,t)$ has independent increments and takes values in $C^{0,1}$. The former follows from~\eqref{eq:F}. We now show that $F(\cdot,t)\in C^{0,1}$.\footnote{We refer to \cite[pp.~334]{Ku90} for the definition of $C^{0,1}$. Namely, a function $f$ belongs to $C^{0,1}$ if for every compact set $K\subset \r$, $$\sup_{x\in K}\frac{|f(x)|}{1+|x|}+\sup_{x,y\in K, x\neq y}\frac{|f(x)-f(y)|}{|x-y|}<\infty.$$} Without loss of generality, let $r=0$. Fix $t\in \r_+$. By~\ref{item:A1}, the finite variation component is easily controlled. It therefore suffices to show that, for any compact set $K \subset \r$,
    \begin{align}\label{expec_claim}
        \e\left[\sup_{x\in K} |G(x)|\right]<\infty, \qquad \e\left[\sup_{x,y\in K,\, x\neq y} |H(x,y)|\right]<\infty,
    \end{align}
    where we define the centered Gaussian processes
    \begin{gather*}
        G(x):=\int_0^t\! \int_{\r} \sigma(x,u,s) \cW(\dd s,\dd u),\quad
        H(x,y):=\int_0^t\! \int_{\r} [\sigma(x,u,s)-\sigma(y,u,s)] \cW(\dd s,\dd u).
    \end{gather*} 
    
    Fix a compact set $K$ and choose $N\ge 0$ such that $K\subset [-N, N]$. Then for any $x,y\in K$, we have $|x-y|\le 2N$. By~\eqref{ineq:bo}, there exists a constant $C$ such that
    \begin{align*}
        d_G (x,y):= \e\bigl[(G(x)-G(y))^2\bigr]=\int_0^t \!\! \int_\r \left|\sigma(x,u,s)-\sigma(y,u,s)\right|^2 \dd s \dd u\le 2CNt|x-y|.
    \end{align*}
    Consequently, the distance for $H$ satisfies
    \begin{align*}
        d_H((x,y),(z,w)) :=&\; \e\!\left[(H(x,y)-H(z,w))^2\right] \\
        =&\; \e\!\left[\bigl|(G(x)-G(z)) - (G(y)-G(w))\bigr|^2\right] \\
        \le&\; 2d_G(x,z) + 2d_G(y,w) \le 4CNt (|x-z|+|y-w|).
    \end{align*}

    \noindent
    Let $Z_u := \sqrt{2CNt}B_u$ for $u\in \r$, where $B$ is a standard two-sided Brownian motion. The centered Gaussian process $Z$ satisfies $\e[|Z_x-Z_y|^2]=2CNt|x-y|\ge d_G (x,y)$. By the Sudakov--Fernique inequality (see, e.g., \cite[Theorem~7.2.11]{High_dim}),
    \begin{align*}
        \e\left[\sup_{x\in K} |G(x)|\right]\le \e\left[\sup_{x\in K} G(x)\right]+\e\left[\sup_{x\in K} -G(x)\right]\le 2\sqrt{2CNt}\,\e\left[\sup_{|u|\le N} B_u\right]<\infty.
    \end{align*}
    Similarly, let $W_{u,v}:=2\sqrt{CNt}\,(B^1_u+B^2_v)$ for $(u,v)\in \r$, where $B^1$ and $B^2$ are independent two-sided Brownian motions. The centered Gaussian process $W$ satisfies $\e[|W_{x,y}-W_{z,w}|^2]=4CNt\{\e[|B_x^1-B_z^1|^2]+\e[|B_y^2-B_w^2|^2]\}=4CNt(|x-z|+|y-w|)\ge d_H((x,y),(z,w))$. Applying the Sudakov--Fernique inequality again yields
    \begin{align*}
        \e\left[\sup_{x,y\in K,\, x\neq y} |H(x,y)|\right]\le 4\sqrt{CNt}\, \e\left[\sup_{|u|,|v|\le N} (B_u^1+B_v^2)\right]<\infty.
    \end{align*}
    
    We conclude that the expectations in~\eqref{expec_claim} are finite. This completes the proof by \cite[Theorem~4.5.1(ii)]{Ku90}.
\end{proof}

Moreover, by \cite[Theorem 4.2.5]{Ku90}, there exists a constant $K$ such that 
\begin{align*}
|\e[\cS_{r,x}(a)-a]|&\le K(1+|a|)|x-r|,\\    \left|\e[(\cS_{r,x}(a)-a)(\cS_{r,x}(a')-a')]\right|&\le K(1+|a|)(1+|a'|)|x-r|,
\end{align*}
for all $a,a'\in \r$ and all $r,x$ in a finite time interval. We now derive the random infinitesimal generator of the backward Brownian flow under the regularity conditions.

\begin{proposition}\label{thm:Brownian_flow_back}
Under Assumptions~\ref{assu:H} and~\ref{assu:A}, let $\cS$ be the forward Brownian flow given in Proposition~\ref{thm:Brownian_flow_for}. Then the associated backward flow $\cS^*$ is governed by~\eqref{eq:back_flow}.
\end{proposition}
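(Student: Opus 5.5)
Since $\cS$ is a flow of homeomorphisms (Proposition~\ref{thm:Brownian_flow_for}), the definition~\eqref{eq:def_dual} of the dual reduces to the inverse map. For $x\le r$, the map $a\mapsto \cS_{x,r}(a)$ is a continuous strictly increasing bijection of $\r$. Hence $\cS^*_{r,x}(b)=\inf\{a:\cS_{x,r}(a)>b\}=\cS_{x,r}^{-1}(b)$. The statement therefore becomes an identification of the SDE satisfied by the inverse flow $\cS_{x,r}^{-1}$ as a process in the backward time $x$, with $r$ fixed. For this I would invoke Kunita's theory of backward flows, \cite[Theorem~4.2.10]{Ku90} or equivalently the backward SDE in \cite[Section~4.2]{Ku90}. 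It states that, for a Brownian flow generated by a $C^{0,1}$-valued semimartingale $F$ with local characteristic $(a,b,A_t)$ in the classes $\widetilde C^{2,1}_{ub}$, $C^{1,1}_{ub}$, the inverse $\cS_{x,r}^{-1}(b)$ satisfies the backward Itô equation
\begin{align*}
\cS^{-1}_{x,r}(b)=b-\int_x^r F\bigl(\cS^{-1}_{u,r}(b),\hat{\dd} u\bigr)+\int_x^r c\bigl(\cS^{-1}_{u,r}(b),u\bigr)\dd A_u .
\end{align*}
Here $\hat{\dd}$ denotes the backward Itô integral, and the correction term is $c(y,u)=\partial_x a(x,y,u)|_{x=y}$. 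Lemma~\ref{lm:es_conA}(ii) and the proof of Proposition~\ref{thm:Brownian_flow_for} verify exactly these hypotheses.

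The next step is to compute the correction term explicitly. By Lemma~\ref{lm:es_conA}(i),
\begin{align*}
\partial_x a(x,y,t)\big|_{x=y}=\int_\r\partial_x\sigma(y,t,s)\,\sigma(y,t,s)\dd s=\tfrac12\partial_x\bar\sigma(y,t).
\end{align*}
This produces the term $\tfrac12\int\partial_x\bar\sigma\,\dd u$ in~\eqref{eq:back_flow}. The drift part of $F$ contributes $-\int b\,\dd u$ directly.

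Then I would perform the time reversal $u\mapsto -u$. Set $Y_x:=\cS^*_{-r,-x}(a)=\cS^{-1}_{-x,-r}(a)$ for $x\ge r$. With this substitution the backward stochastic integral $\int_{-x}^{-r}\!\int_\r\sigma(\cdot,u,s)\cW(\dd s,\hat\dd u)$ becomes a forward Itô integral $\int_r^x\!\int_\r\sigma(Y_u,-u,s)\cW^*(\dd s,\dd u)$ against the reflected white noise $\cW^*$. The key point is that $Y_u$ is measurable with respect to the $\sigma$-field generated by $\cW$ on times in $[-u,-r]$. This is the forward filtration of $\cW^*$, so the backward integral (defined with right endpoints) corresponds to the forward Itô integral (left endpoints) of $\cW^*$. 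Since $\cW^*$ is again a space-time white noise, $Y$ solves~\eqref{eq:back_flow}. Strong uniqueness for~\eqref{eq:back_flow} follows from conditions~\ref{item:1a}--\ref{item:1d} (Lemma~\ref{lm:es_conA}(iii)) and the pathwise uniqueness discussed in the introduction.

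I expect the main obstacle to be the rigorous translation of Kunita's backward integral $F(\cdot,\hat\dd u)$, which is driven by a general $C^{0,1}$-valued semimartingale, into an integral against the white noise with a random integrand $\sigma(Y_u,-u,s)$. I would handle this by approximating the integrand by step processes in $u$. For each step, Kunita's backward integral is a Riemann sum with right endpoints of $F(y,\cdot)$ evaluated at the backward-adapted points $y$. Using~\eqref{eq:F} and linearity, these sums coincide with the corresponding Walsh integrals against $\cW^*$. The $L^2$ limits then agree by the isometry, using the bound~\eqref{ineq:bo} and the linear growth bound on $a$ from~\ref{item:A2}.
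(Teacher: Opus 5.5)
Your proposal is correct and follows essentially the same route as the paper. Both invoke \cite[Theorem~4.2.10]{Ku90}, compute the correction term from Lemma~\ref{lm:es_conA}(i) to obtain $\tfrac12\partial_x\bar\sigma$, and turn Kunita's backward integral into a forward Walsh integral against $\cW^*$ via right-endpoint Riemann sums under $u\mapsto -u$. Your explicit identification $\cS^*_{r,x}=\cS_{x,r}^{-1}$ for a flow of homeomorphisms, and your step-process/isometry justification of the Riemann-sum limit, supply details that the paper leaves implicit.
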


\begin{proof}
By \cite[Theorem 4.2.10]{Ku90}, the correction term of $F$ is 
$$
C(x,t)=\frac{1}{2} \int_r^t \partial_x \!\left.\left[ \int_{\r} \sigma(x,u,s) \sigma(y,u,s) \dd s \right] \right|_{y=x} \dd u= \frac{1}{4} \int_r^t \partial_x \bar\sigma(x,u) \dd u.
$$
\noindent Let $\Delta=\{-x=u_0<\cdots<u_n=-r\}$ be a partition of $[-x,-r]$, and set $v_k =-u_k$. Then
\begin{align*}
    &\int_{-x}^{-r} F(\cS_{-r,u}(a), \hat{\dd} u)= \lim_{|\Delta|\rightarrow 0} \sum_{k=0}^{n-1} \left[F(\cS_{-r,u_{k+1}}(a), u_{k+1}) -F(\cS_{-r,u_{k+1}},u_k) \right]\\
    =& \lim_{|\Delta|\rightarrow 0} \sum_{k=0}^{n-1} \left[ \int_{u_k}^{u_{k+1}}\!\! \int_{\r} \sigma(\cS_{-r,u_{k+1}}(a),u,s) \cW(\dd s, \dd u)+ \int_{u_k}^{u_{k+1}} b(\cS_{-r,u_{k+1}}(a),u)\dd u\right]\\
    =& \lim_{|\Delta|\rightarrow 0} \sum_{k=0}^{n-1} \left[ \int_{v_{k+1}}^{v_k}\! \int_{\r} \sigma(\cS_{-r,-v_{k+1}}(a),-v,s) \cW^*(\dd s, \dd v)+ \int_{v_{k+1}}^{v_k} b(\cS_{-r,-v_{k+1}}(a),-v)\dd v\right]\\
    =&\int_r^x\!\! \int_{\r} \sigma (\cS_{-r,-u}(a),-u,s)\cW^*(\dd s, \dd u) +\int_r^x  b(\cS_{-r,-u}(a),-u) \dd u  .
\end{align*}

\noindent
The limit is taken in probability. The result then follows from \cite[Theorem~4.2.10]{Ku90}.
\end{proof}


\section{Convergence of approximations}\label{sec:appro}

We introduce two additional assumptions on $(\sigma,b)$, which are weakened versions of~\ref{item:1c} and \ref{item:1d}, respectively. For any interval $[T_1,T_2]\subset \r$,
\vspace{-0.5em}
\begin{enumerate}[label=\textnormal{(2.\alph*)}]
\item there is a constant $K_\sigma>0$ such that $\bar{\sigma}(x,t)\le K_\sigma(1+|x|)^2$ for any $x\in \r$ and $t\in [T_1,T_2]$;\label{item:2c}

\vspace{-0.5em}
\item for each $m \geq 1$ there is a nondecreasing concave $r_m$ on $\r_+$ with $\int_{0_+}\! r_m(z)^{-1} \dd z=\infty$, such that for all $x, y \in [-m,m]$ and $t\in [T_1,T_2]$, $\left|b(x,t)-b(y,t)\right| \leq r_m(|x-y|)$. \label{item:2d}
\end{enumerate}

\vspace{-0.5em}
\noindent 
Therefore, under Assumption~\ref{assu:H} and conditions \ref{item:1a}, \ref{item:1b}, \ref{item:2c} and \ref{item:2d}, the SDE~\eqref{eq:SDE_gener} admits a unique strong solution, and pathwise uniqueness holds (see Appendix~\ref{app:path_unique}). 

This section presents two convergence results concerning approximations of the coefficients and the initial conditions, which are formulated in Propositions~\ref{thm:conv_appro_1} and~\ref{thm:conv_appro_2}, respectively. As an application of Proposition~\ref{thm:conv_appro_1}, we establish the non-crossing property.

\subsection{Convergence under coefficient approximation}\label{subsec:conv_re}

Before stating the convergence result, we introduce the space of the white noise used in the proofs. Following \cite[Chapter~IV, Example~4]{Walsh1986AnIT}, the white noise $\cW$ takes values locally in the Sobolev space of negative index $H^{-d}_{\mathrm{loc}}(\r^2)$ for $d>1$, which is a Polish space. Hence, by \cite[Theorem~1.3]{Bil99}, the law of $\cW$ is tight. 
We also work with the Polish space $C([r,\infty))$ of continuous functions, endowed with the topology of uniform convergence on compact intervals. A metric generating this topology is
\begin{align}\label{metric_def}
	d(f,g):=\sum_{m=1}^\infty 2^{-m}\min\bigg\{1,\sup_{t\in[r,r+m]}|f(t)-g(t)|\bigg\}.
\end{align}

\begin{proposition}\label{thm:conv_appro_1}
Fix $(a,r)\in\r^2$. Let $(\sigma_n,b_n)_{n\ge1}$ and $(\sigma,b)$ satisfy Assumption~\ref{assu:H} and conditions~\ref{item:1a}, \ref{item:1b}, \ref{item:2c}, and~\ref{item:2d}. Assume that $(\sigma_n)_{n\ge1}$ has the same sign as $\sigma$, i.e., $\sigma_n\sigma\ge0$ for all $n$. Let $\cS^n_{r,\cdot}(a)$, $n\ge1$, and $\cS_{r,\cdot}(a)$ denote the unique strong solutions to~\eqref{eq:SDE_gener} corresponding to $(\sigma_n,b_n)$ and $(\sigma,b)$, respectively. Suppose that:
\vspace{-0.5em}
\begin{enumerate}[label=(\roman*)]
\item for any compact set $K\subset\r$ and all $T\ge r$,
\begin{equation}\label{eq:assume_sig_b}
\begin{gathered}
\lim_{n\to\infty}\sup_{r\le t\le T}\sup_{x\in K}\int_{\r}(\sigma_n(x,t,s)-\sigma(x,t,s))^2\,\dd s=0,\\
\lim_{n\to\infty}\sup_{r\le t\le T}\sup_{x\in K}|b_n(x,t)-b(x,t)|=0;
\end{gathered}
\end{equation}
\vspace{-1.5em}
\item for any $T\ge r$, there exist positive constants $K_b'=K_b'(T)$ and $K_\sigma'=K_\sigma'(T)$ such that, for all $x\in\r$, $r\le t\le T$, and $n\ge1$,
\begin{align}\label{eq:assume_lin_bound}
|b_n(x,t)| \le K_b'(1+|x|), \qquad 
\bar\sigma_n(x,t):=\int_{\r}\sigma_n(x,t,s)^2\,\dd s \le K_\sigma'(1+|x|)^2.
\end{align}
\end{enumerate}
\vspace{-0.5em}
Then $\cS^n_{r,\cdot}(a)\to \cS_{r,\cdot}(a)$ in probability in $C([r,\infty))$. In particular, there exists a subsequence of $\{\cS^n_{r,\cdot}(a)\}$ that converges almost surely uniformly on compact intervals to $\cS_{r,\cdot}(a)$.
\end{proposition}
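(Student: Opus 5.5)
The plan is the standard tightness–identification–Yamada–Watanabe argument (the Gyöngy–Krylov route): prove weak convergence of pairs of solutions, then upgrade to convergence in probability using pathwise uniqueness.

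\emph{Tightness.} Using the uniform linear growth bounds~\eqref{eq:assume_lin_bound}, the Burkholder--Davis--Gundy inequality and Gronwall's lemma, I will first show that for every $T\ge r$ and $p\ge2$,
\[
\sup_n \e\Bigl[\sup_{r\le t\le T}|\cS^n_{r,t}(a)|^p\Bigr]<\infty .
\]
Next I will derive the moment bound $\e|\cS^n_{r,t}(a)-\cS^n_{r,t'}(a)|^{4}\le C|t-t'|^2$, with $C$ independent of $n$. Kolmogorov's criterion then gives tightness of $(\cS^n_{r,\cdot}(a))_n$ in $C([r,\infty))$ with the metric~\eqref{metric_def}. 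Together with tightness of $\cW$ in $H^{-d}_{\mathrm{loc}}$, this yields tightness of the triples
\[
(\cS^n_{r,\cdot}(a),\,\cS^m_{r,\cdot}(a),\,\cW)
\]
for any pair of subsequences $n_k,m_k$.

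\emph{Identification.} By Skorokhod's representation, along a further subsequence I get, on a new probability space, almost sure convergence
\[
(X^{k},Y^{k},\cW^{k})\to(X,Y,\widetilde\cW).
\]
Here $X^k$ solves~\eqref{eq:SDE_gener} with coefficients $(\sigma_{n_k},b_{n_k})$ and noise $\cW^k$, and $Y^k$ likewise with $(\sigma_{m_k},b_{m_k})$. I must show that $X$ and $Y$ both solve~\eqref{eq:SDE_gener} with $(\sigma,b)$ and the same white noise $\widetilde\cW$, with respect to a common filtration.

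The drift term is handled by the local uniform convergence of $b_n$ in~\eqref{eq:assume_sig_b}, the continuity of $b$, and the uniform convergence of the paths. For the stochastic integral I will use the martingale-problem characterization. The key observation is that
\[
M^k_t:=\int_r^t\!\int_\r \sigma_{n_k}(X^k_u,u,s)\,\cW^k(\dd s,\dd u)
\]
is a martingale whose quadratic variation is $\int_r^t\bar\sigma_{n_k}(X^k_u,u)\,\dd u$. Its cross-variation with $\cW^k(\phi)$, for a test function $\phi$, is
\[
\int_r^t\!\int_\r \sigma_{n_k}(X^k_u,u,s)\,\phi(s,u)\,\dd s\,\dd u .
\]
Both expressions converge to their limits with $(\sigma,X)$: the first by~\eqref{eq:assume_sig_b} combined with continuity of $\bar\sigma$ (Lemma~\ref{app:lem_conti}), and the second by Cauchy--Schwarz. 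The uniform moment bounds provide the uniform integrability needed to pass to the limit in the martingale property. The limit $M_t$ is then a continuous martingale with the correct bracket and cross-brackets. Comparing it with $\int\!\int\sigma(X_u,u,s)\,\widetilde\cW(\dd s,\dd u)$, the difference has quadratic variation equal to $\langle M\rangle-2\langle M,\int\sigma\,\dd\widetilde\cW\rangle+\langle\int\sigma\,\dd\widetilde\cW\rangle=0$, so the two coincide.

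\emph{Main obstacle.} I expect the cross-variation step to be the hardest. It requires approximating $\sigma(X_u,u,\cdot)$ in $L^2(\dd s\,\dd u)$ by simple functions built from test functions, so that $\langle M,\int\sigma\,\dd\widetilde\cW\rangle$ can be identified from the brackets $\langle M,\widetilde\cW(\phi)\rangle$. Since the noise is infinite-dimensional, I plan to expand along an orthonormal basis of $L^2(\r)$ in $s$ and truncate. The constant-sign hypothesis $\sigma_n\sigma\ge0$ is presumably what lets the quadratic-variation identity close without extra cross terms, and I would use it at exactly this point.

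\emph{Conclusion.} Pathwise uniqueness (conditions~\ref{item:1b}, \ref{item:2d}, Appendix~\ref{app:path_unique}) gives $X=Y$ almost surely, so the limit law of $(X,Y)$ is concentrated on the diagonal. By the Gyöngy--Krylov lemma, $\cS^n_{r,\cdot}(a)$ converges in probability in $C([r,\infty))$. The limit solves~\eqref{eq:SDE_gener} driven by $\cW$, so by uniqueness it equals $\cS_{r,\cdot}(a)$. The almost-sure statement for a subsequence follows from the standard fact that convergence in probability implies almost-sure convergence along a subsequence, which here gives uniform convergence on compacts.
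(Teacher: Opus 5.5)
Your proposal is correct and follows essentially the paper's argument. The shared steps are: tightness from uniform fourth-moment bounds, Skorokhod representation, and identification of the limit through the bracket and the cross-brackets with $\cW(\phi)$. After that, $\sigma(X_u,u,\cdot)$ is approximated by simple predictable integrands (Appendix~\ref{app:conv}), and pathwise uniqueness concludes.

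The one real difference is the pairing. You pair $\cS^{n}$ with $\cS^{m}$ and invoke Gy\"ongy--Krylov. The paper instead pairs $\cS^n_{r,\cdot}(a)$ with the already-existing strong solution $\cS_{r,\cdot}(a)$ on the same space. The diagonal limit plus the uniform fourth-moment bound then gives $\e[\sup_{r\le x\le T}|\cS^n_{r,x}(a)-\cS_{r,x}(a)|^2]\to0$ directly, with no separate identification of the limit. Your guess about the sign condition is roughly right: the paper uses it in the cross-variation step, but only to make $M^nN^n(\phi)$ a submartingale so that Doob--Meyer applies. Your finite-variation bracket argument does not need it.
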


By the martingale representation theorem, the SDE~\eqref{eq:SDE_gener} is equivalent in law to an SDE driven by a Brownian motion. 
We obtain the following lemma.
  
\begin{lemma}[Lemma 1 in \cite{Kaneko1988}]\label{lm:kanke}
    Under the assumptions of Proposition~\ref{thm:conv_appro_1}, for any $T \ge r$ and compact set $K \subset \r$, there exists $C = C_{T,K} > 0$ such that
    \begin{align}\label{eq:lm_S}
    	\sup_{a\in K} \e\left[ \sup_{x\le u,v\le y} |\cS_{r,u}(a)-\cS_{r,v}(a)|^4 \right] &\le C |y-x|^2, \qquad \forall r \le x < y \le T,\\
    	\label{eq:lm_tight}
    	\sup_{n\ge 1} \sup_{a\in K} \e\left[ \sup_{x\le u,v\le y} |\cS^n_{r,u}(a)-\cS^n_{r,v}(a)|^4 \right] &\le C |y-x|^2, \qquad \forall r \le x < y \le T.
    \end{align}
\end{lemma}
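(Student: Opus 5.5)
The plan is to use the linear growth bounds \eqref{eq:assume_lin_bound}, which hold uniformly in $n$, together with the Burkholder--Davis--Gundy inequality and a moment bound for the solution. Fix $T\ge r$ and a compact $K$, and write $\cS=\cS^n$ or $\cS$ generically. The constants $K_b'$ and $K_\sigma'$ do not depend on $n$ (and $(\sigma,b)$ satisfy \ref{item:1a} and \ref{item:2c}). Every constant below therefore depends only on $T$, $K$, $K_b'$ and $K_\sigma'$, so \eqref{eq:lm_S} and \eqref{eq:lm_tight} come out of the same computation.

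\textbf{Step 1: a uniform moment bound.} I will show that $\sup_{a\in K}\e[\sup_{r\le u\le T}|\cS_{r,u}(a)|^4]\le C$. To justify the manipulations, first localize with $\tau_N:=\inf\{u\ge r:|\cS_{r,u}(a)|\ge N\}$. Next apply BDG to the martingale part, whose quadratic variation is $\int_r^{x}\bar\sigma(\cS_{r,u}(a),u)\,\dd u\le K_\sigma'\int_r^x(1+|\cS_{r,u}(a)|)^2\dd u$. The drift is handled by Hölder's inequality and $|b|\le K_b'(1+|x|)$. Together these give
\[
\e\Big[\sup_{r\le v\le x\wedge\tau_N}|\cS_{r,v}(a)|^4\Big]\le C\Big(1+|a|^4+\int_r^x \e\Big[\sup_{r\le v\le u\wedge\tau_N}|\cS_{r,v}(a)|^4\Big]\dd u\Big).
\]
Gronwall's lemma then bounds the left side independently of $N$, and Fatou's lemma lets $N\to\infty$. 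Since $a\in K$ is bounded, the bound is uniform in $a$.

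\textbf{Step 2: the increment bound.} For $r\le x\le u,v\le y\le T$,
\[
|\cS_{r,u}(a)-\cS_{r,v}(a)|\le 2\sup_{x\le w\le y}\Big|\int_x^w\!\!\int_\r\sigma(\cS_{r,z}(a),z,s)\cW(\dd s,\dd z)\Big|+2\int_x^y|b(\cS_{r,z}(a),z)|\,\dd z .
\]
For the stochastic term, BDG with exponent $4$ gives the bound $C\,\e[(\int_x^y\bar\sigma(\cS_{r,z}(a),z)\dd z)^2]$. By Cauchy--Schwarz in $z$ this is at most $C(y-x)\int_x^y \e[(1+|\cS_{r,z}(a)|)^4]\dd z\le C(y-x)^2$, using Step 1. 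For the drift, Hölder's inequality gives $(\int_x^y|b|\dd z)^4\le (y-x)^3\int_x^y K_b'^4(1+|\cS_{r,z}(a)|)^4\dd z$. Its expectation is at most $C(y-x)^4\le C(T-r)^2(y-x)^2$. Adding the two terms yields the claim with a constant uniform in $n$ and in $a\in K$.

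\textbf{Main obstacle.} The only delicate point is Step 1, where the localization is needed because finiteness of the fourth moment is not known a priori. Uniformity in $n$ is automatic, since nothing beyond the growth bounds \eqref{eq:assume_lin_bound} is used; in particular the convergence \eqref{eq:assume_sig_b} and the modulus conditions are not needed here. Alternatively, the whole argument can be read off from \cite[Lemma~1]{Kaneko1988} once \eqref{eq:SDE_gener} is rewritten, via martingale representation, as a Brownian SDE with diffusion coefficient $\sqrt{\bar\sigma}$ and drift $b$.
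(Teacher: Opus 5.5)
Your argument is correct and is, in substance, the paper's own. The paper gives no computation: it notes that, by martingale representation, \eqref{eq:SDE_gener} is equivalent in law to a Brownian SDE with diffusion coefficient $\sqrt{\bar\sigma}$ and drift $b$, and then cites Lemma~1 of \cite{Kaneko1988}. That lemma is proved by exactly the localization/BDG/Gronwall moment bound and the BDG/H\"older increment bound you write out.

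Working directly with the white-noise martingale, whose quadratic variation is $\int\bar\sigma(\cS_{r,u}(a),u)\,\dd u$, spares you the transfer-in-law step. It also makes explicit that the constant depends only on $T$, $K$ and the growth constants ($K_b',K_\sigma'$, resp.\ $K_b,K_\sigma$). This gives uniformity in $n$, and also uniformity in the starting time over bounded intervals, which is what Lemma~\ref{prop:density} later needs.
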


\smallskip
\begin{proof}[Proof of Proposition~\ref{thm:conv_appro_1}]
    By~\eqref{eq:lm_tight} and Theorems~4.2 and~4.3 in Chapter~I of~\cite{SDE81}, the sequence $$\{(\cS^n_{r,\cdot}(a), \cS_{r,\cdot}(a), \cW)\}_{n=1}^\infty$$ is tight. By the Skorokhod representation theorem (see, e.g., \cite[Theorem~6.7]{Bil99}), there exists a probability space $(\hat\Omega, \hat \cF, \hat \p)$, on which a subsequence (still indexed by $n$) and $\{ \hat{\cS}^n, \cY^n, \cW^n \}_{n=1}^\infty$ are defined such that: the law of $\{ \hat{\cS}^n, \cY^n, \cW^n \}$ and $\{\cS^n_{r,\cdot}(a), \cS_{r,\cdot}(a), \cW\}$ are identical for each $n\ge 1$; and $\{ \hat{\cS}^n, \cY^n, \cW^n \}$  converges to $\{ \hat{\cS}, \cY, \hat{\cW} \}$, where $\hat{\cS}^n \to \hat{\cS}$ and $\cY^n \to \cY$ in $C([r,\infty))$ almost surely, and $\cW^n \to \hat{\cW}$ in $H^{-d}_{\mathrm{loc}}(\r^2)$ almost surely. Then, by~\eqref{eq:lm_S} and~\eqref{eq:lm_tight}, there exists a constant $C=C_T>0$ such that
    \begin{multline*}
    	\sup_n \hat{\e} \left[ \sup_{r\le x\le T} |\hat{\cS}^n(x)-\cY^n(x)|^4 \right]=\sup_n \e \left[ \sup_{r\le x\le T} |\cS_{r,x}^n(a)-\cS_{r,x}(a)|^4 \right]\\
    	\le 8 \sup_n\e \left[ \sup_{r\le x\le T} |\cS_{r,x}^n(a)-a|^4 \right]+8\, \e \left[ \sup_{r\le x\le T} |\cS_{r,x}(a)-a|^4 \right]
    	\le 16C(T-r)^2,
    \end{multline*}
    where $\hat{\e}$ stands for the expectation under $\hat \p$. This ensures uniform integrability. Thus, 
    \begin{align}\label{eq:limsup_L2}
        \nonumber \limsup_{n \rightarrow \infty}\e \left[ \sup_{r\le x\le T} |\cS_{r,x}^n(a)-\cS_{r,x}(a)|^2 \right]
        &= \limsup_{n \rightarrow \infty}\hat{\e} \left[ \sup_{r\le x\le T} |\hat{\cS}^n(x)-\cY^n(x)|^2 \right]\\
        &= \hat{\e} \left[ \sup_{r\le x\le T} |\hat{\cS}(x)-\cY(x)|^2 \right]. 
    \end{align}
     
    Each $\hat{\cS}^n$ and $\cY^n$, $n\ge 1$, satisfies, for $x\ge r$,
    \begin{align*}
        \hat{\cS}^n(x) &= a +\int_r^x\!\! \int_{\r} \sigma_n(\hat{\cS}^n(u), u, s) \cW^n(\dd s, \dd u)+ \int_r^x b_n(\hat{\cS}^n(u),u)  \dd u,\\
        \cY^n(x) &= a +\int_r^x\!\! \int_{\r} \sigma(\cY^n(u), u, s) \cW^n(\dd s, \dd u)+ \int_r^x b(\cY^n(u), u)  \dd u.
    \end{align*}

	\noindent
	Letting $n \rightarrow \infty$ and using Proposition~\ref{lm:conv_sigma}, we deduce that both $\hat{\cS}$ and $\cY$ solve
	$$
	X(x) = a +\int_r^x\!\! \int_{\r} \sigma(X(u), u, s) \hat{\cW}(\dd s, \dd u)+ \int_r^x b(X(u),u) \dd u, \quad x \ge r .
	$$
	By pathwise uniqueness, $\hat{\cS}=\cY$ almost surely, and by~\eqref{eq:limsup_L2}, we obtain
	$$
	\lim_{n \rightarrow \infty} \e \left[ \sup_{r\le x\le T} |\cS_{r,x}^n(a)-\cS_{r,x}(a)|^2 \right]=0.
	$$
	By~\eqref{metric_def}, $\cS_{r,\cdot}^n(a)$ converges in probability to $\cS_{r,\cdot}(a)$ as $n \rightarrow \infty$ in $C([r,\infty))$.
\end{proof}

\subsection{Convergence under initial value approximation}

Assume that the coefficients $(\sigma,b)$ satisfy Assumption~\ref{assu:H} and conditions~\ref{item:1a}--\ref{item:1d}, and let $\cS$ and $\cS^{\mathrm b}$ denote the associated forward and backward solutions to~\eqref{eq:SDE_gener} and~\eqref{eq:back_flow}, respectively. For each starting point $(a,r)\in\r^2$, we glue their trajectories by
\begin{align}\label{def:overline_S}
    \overline{\cS}_{r,x}(a):= 
    \begin{cases}
\cS_{r,x}(a), & \text{if }\  x \ge r, \\
\cS^{\mathrm{b}}_{r,x}(a), & \text{if }\  x \le r.
\end{cases}
\end{align}

Let $(\sigma_n,b_n)$ satisfy Assumptions~\ref{assu:H} and~\ref{assu:A}, and assume that $(\sigma_n)_{n\ge1}$ has the same sign as $\sigma$. Let $\cS^n$ denote the corresponding Brownian flow as in Proposition~\ref{thm:Brownian_flow_for}. By Proposition~\ref{thm:Brownian_flow_back}, the associated backward flow is governed by~\eqref{eq:back_flow} with coefficients $(\sigma_n,b_n)$.
Suppose further that, for any compact set $K \subset \r$ and all $T \ge r$, the pair $(\sigma_n, b_n)$ satisfies~\eqref{eq:assume_sig_b} and
\begin{align}\label{eq:assume_sig_partial}
    \lim_{n \rightarrow \infty} \sup_{r\le t\le T} \sup_{x\in K} 
    \left| \partial_x \bar\sigma_n(x,t) - \partial_x \bar\sigma(x,t) \right| = 0,
\end{align}
and that there exist constants $K_b', K_\sigma'>0$ such that for all $x\in \r$, $r\le t\le T$, and $n\ge 1$,
\begin{align}\label{eq:assume_lin_bound_1}
    |b_n(x,t)|\le K_b'(1+|x|),\qquad 
    |\partial_x \bar{\sigma}_n(x,t)| \le K_\sigma'(1+|x|).
\end{align}
Then, for each $(a,r)\in \r^2$, the backward process $(\cS^n_{r,x}(a),\, x\le r)$ converges, in the sense of Proposition~\ref{thm:conv_appro_1}, to $(\cS^{\mathrm{b}}_{r,x}(a),\, x\le r)$. 
The following result describes the non-crossing property of the curves $\overline{\cS}$ with countably many starting points.

\begin{proposition}\label{prop:non-cros}
    Suppose that $(\sigma,b)$ satisfy Assumption~\ref{assu:H} and conditions~\ref{item:1a}--\ref{item:1d}. Then, almost surely, for all $(a_1,r_1),(a_2,r_2)\in \bD^2$ and all $y,z\in \r$,
    \begin{align*}
        \overline\cS_{r_1,y}(a_1)<  \overline\cS_{r_2,y}(a_2) \ \Longrightarrow \ \overline\cS_{r_1,z}(a_1)\le \overline\cS_{r_2,z}(a_2).
    \end{align*}
\end{proposition}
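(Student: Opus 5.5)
We will prove the non-crossing property first for a smooth approximating flow and then pass to the limit. Fix, once and for all, a sequence $(\sigma_n,b_n)$ satisfying Assumptions~\ref{assu:H} and~\ref{assu:A}, with $\sigma_n\sigma\ge0$, together with~\eqref{eq:assume_sig_b}, \eqref{eq:assume_sig_partial} and~\eqref{eq:assume_lin_bound_1}. Such a sequence is built by mollification, as indicated for Appendix~\ref{app:appro}. Let $\cS^n$ be the Brownian flow of homeomorphisms from Proposition~\ref{thm:Brownian_flow_for}, and define $\overline{\cS}^n$ as in~\eqref{def:overline_S}. By Proposition~\ref{thm:Brownian_flow_back}, the backward part of $\overline{\cS}^n$ is the inverse flow of $\cS^n$.

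\emph{Step 1: exact non-crossing for the homeomorphic flow.} For each $n$, every curve $z\mapsto\overline{\cS}^n_{r,z}(a)$ is the orbit of a single point under the flow of homeomorphisms. Indeed, for $z\ge r$ it equals $\cS^n_{r,z}(a)$, and for $z\le r$ it equals $(\cS^n_{z,r})^{-1}(a)$. By the cocycle property (i), in both cases $\overline{\cS}^n_{r,z}(a)=\cS^n_{y,z}(\overline{\cS}^n_{r,y}(a))$ for every $y,z$, where $\cS^n_{y,z}:=(\cS^n_{z,y})^{-1}$ when $z<y$. The order between the reference times and $r$ is handled by composition, using the convention $\cS^n_{r,y}=\cS^n_{x,y}\circ\cS^n_{r,x}$ extended to all orderings through inverses.

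Each map $\cS^n_{y,z}$ is an increasing homeomorphism of $\r$. It is increasing and not decreasing because it is continuous in $z$, equals the identity at $z=y$, and is a bijection for every $z$. Therefore, almost surely, for all dyadic $(a_i,r_i)$ and all real $y,z$, the strict inequality $\overline{\cS}^n_{r_1,y}(a_1)<\overline{\cS}^n_{r_2,y}(a_2)$ implies $\overline{\cS}^n_{r_1,z}(a_1)<\overline{\cS}^n_{r_2,z}(a_2)$. One subtlety is that the orbit identity must hold simultaneously for all starting points. Since the flow of homeomorphisms is defined for all points at once, the identity holds almost surely for all $(a,r)$ simultaneously.

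\emph{Step 2: passage to the limit.} By Proposition~\ref{thm:conv_appro_1}, and by its backward analogue stated just before Proposition~\ref{prop:non-cros}, for each of the countably many $(a,r)\in\bD^2$ we have $\overline{\cS}^n_{r,\cdot}(a)\to\overline{\cS}_{r,\cdot}(a)$ in probability, uniformly on compacts. This holds separately on $[r,\infty)$ and on $(-\infty,r]$. A diagonal extraction gives a single subsequence along which the convergence holds almost surely, uniformly on compacts, for all dyadic starting points at once.

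Now fix an outcome in this full-measure event and suppose $\overline{\cS}_{r_1,y}(a_1)<\overline{\cS}_{r_2,y}(a_2)$. By the uniform convergence, for $n$ large we have $\overline{\cS}^n_{r_1,y}(a_1)<\overline{\cS}^n_{r_2,y}(a_2)$. Step 1 then gives $\overline{\cS}^n_{r_1,z}(a_1)<\overline{\cS}^n_{r_2,z}(a_2)$. Letting $n\to\infty$ yields the non-strict inequality $\overline{\cS}_{r_1,z}(a_1)\le\overline{\cS}_{r_2,z}(a_2)$. This explains why the conclusion is only $\le$.

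\emph{Main obstacle.} I expect the delicate point to be that the backward limits of the approximations are genuinely the solutions $\cS^{\mathrm b}$ of~\eqref{eq:back_flow}, which is what identifies $\overline{\cS}$ on $x\le r$. This requires applying Proposition~\ref{thm:conv_appro_1} to the time-reversed SDE driven by $\cW^*$. The coefficients there are $\sigma_n(\cdot,-u,\cdot)$ and the drift $-b_n+\tfrac12\partial_x\bar\sigma_n$. We must check that these meet hypotheses (i) and (ii) of Proposition~\ref{thm:conv_appro_1}, using~\eqref{eq:assume_sig_partial} and~\eqref{eq:assume_lin_bound_1}. We must also check that the limit equation has pathwise uniqueness, which follows from conditions~\ref{item:1b} and~\ref{item:1d}. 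Assuming the paper's remark before Proposition~\ref{prop:non-cros} covers this, the remainder is the countable-union argument above.
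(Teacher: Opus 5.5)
Your proposal is correct and follows essentially the same route as the paper. Both arguments approximate $(\sigma,b)$ by the homeomorphic Brownian flows of Lemma~\ref{lm:C1}, use Proposition~\ref{thm:conv_appro_1} (forward and time-reversed) with a diagonal subsequence to get almost sure locally uniform convergence of all dyadic forward and backward lines, and pass the strict non-crossing of the smooth flows to the limit, where it becomes non-strict. Your Step~1 just spells out the paper's one-line remark that a Brownian flow of homeomorphisms is strictly order-preserving on full trajectories. One small imprecision: the approximating sequence is not a plain mollification, since Lemma~\ref{lm:C1} adds a correction term to the mollified $\sigma$ in order to secure \eqref{eq:assume_sig_partial}.
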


\begin{proof}
    By Lemma~\ref{lm:C1}, we construct approximations $(\sigma_n,b_n)$ satisfying Assumptions~\ref{assu:H} and~\ref{assu:A}, together with~\eqref{eq:assume_sig_b}, \eqref{eq:assume_sig_partial}, and~\eqref{eq:assume_lin_bound_1}, and such that $\sigma_n\sigma\ge0$.
    Then, by Proposition~\ref{thm:conv_appro_1}, with probability one there exists a subsequence $\{n_k\}$ such that, for each $(a,r)\in \bD^2$, $(\cS^{n_k}_{r,x}(a), \, x\ge r)$ and $(\cS^{n_k}_{r,x}(a), \, x\le r)$ converges to $(\cS_{r,x}(a), \, x\ge r)$ and $(\cS^{\mathrm{b}}_{r,x}(a), \, x\le r)$ uniformly on compact intervals, respectively. For each $n_k$, $\cS^{n_k}$ is a Brownian flow on the full-trajectory and hence satisfies the strict non-crossing property: 
    \begin{align*}
        \cS^{n_k}_{r_1,y}(a_1)< \cS^{n_k}_{r_2,y}(a_2) \; \Longrightarrow \; \cS^{n_k}_{r_1,z}(a_1)< \cS^{n_k}_{r_2,z}(a_2).
    \end{align*}
    Passing to the limit along the subsequence converts the strict inequality into a non-strict one, which establishes the result for all $(a_1,r_1),(a_2,r_2)\in \bD^2$.
\end{proof}

The following result follows from the same proof, using Lemma~\ref{lm:a1} instead.

\begin{proposition}\label{cor:non_cross}
    Suppose that $(\sigma,b)$ satisfy Assumption~\ref{assu:H} and conditions \ref{item:1a}, \ref{item:1b}, \ref{item:2c}, and \ref{item:2d}. Then, almost surely, for all $(a_1,r_1), (a_2,r_2) \in \bD^2$ and all $z \ge y \ge r_1\vee r_2$,
    $$
    \cS_{r_1,y}(a_1) < \cS_{r_2,y}(a_2) 
    \; \Longrightarrow\; 
    \cS_{r_1,z}(a_1) \le \cS_{r_2,z}(a_2).
    $$
\end{proposition}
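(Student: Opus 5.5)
We follow the proof of Proposition~\ref{prop:non-cros}, but now only the forward lines are involved. Under the weaker hypotheses (conditions \ref{item:1a}, \ref{item:1b}, \ref{item:2c}, \ref{item:2d}), the first step is to invoke Lemma~\ref{lm:a1}. It should supply approximating coefficients $(\sigma_n,b_n)$ with the following properties:
\begin{itemize}
\item each $(\sigma_n,b_n)$ satisfies Assumptions~\ref{assu:H} and~\ref{assu:A};
\item $\sigma_n\sigma\ge 0$ for every $n$;
\item the locally uniform convergence~\eqref{eq:assume_sig_b} holds;
\item the uniform linear growth bounds~\eqref{eq:assume_lin_bound} hold.
\end{itemize}
No control of $\partial_x\bar\sigma_n$ is needed here, since we never use the backward equation. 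By Lemma~\ref{lm:es_conA}(iii), each $(\sigma_n,b_n)$ satisfies \ref{item:1a}--\ref{item:1d}. Hence all hypotheses of Proposition~\ref{thm:conv_appro_1} hold.

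Second, let $\cS^n$ be the Brownian flow of homeomorphisms from Proposition~\ref{thm:Brownian_flow_for}, built on the same white noise $\cW$. For each fixed $(a,r)$, $\cS^n_{r,\cdot}(a)$ is the strong solution of~\eqref{eq:SDE_gener} with coefficients $(\sigma_n,b_n)$, so Proposition~\ref{thm:conv_appro_1} gives $\cS^n_{r,\cdot}(a)\to\cS_{r,\cdot}(a)$ in probability in $C([r,\infty))$. Because $\bD^2$ is countable, a diagonal extraction yields one subsequence $\{n_k\}$ and one full-probability event on which
\[
\cS^{n_k}_{r,\cdot}(a)\to\cS_{r,\cdot}(a) \quad\text{locally uniformly, for every } (a,r)\in\bD^2 .
\]

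Third, fix $(a_1,r_1),(a_2,r_2)\in\bD^2$ and work on this event. By the flow property, $\cS^{n}_{r_i,z}=\cS^{n}_{y,z}\circ\cS^{n}_{r_i,y}$ for $z\ge y\ge r_1\vee r_2$. Since $\cS^{n}_{y,z}$ is an increasing homeomorphism (a continuous bijection of $\r$ started as the identity), we get
\[
\cS^{n}_{r_1,y}(a_1)<\cS^{n}_{r_2,y}(a_2)\ \Longrightarrow\ \cS^{n}_{r_1,z}(a_1)<\cS^{n}_{r_2,z}(a_2).
\]
Now suppose $\cS_{r_1,y}(a_1)<\cS_{r_2,y}(a_2)$ for some $y$, possibly non-dyadic. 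By pointwise convergence, the strict inequality also holds for $\cS^{n_k}$ at the same $y$ for all large $k$. The display then gives $\cS^{n_k}_{r_1,z}(a_1)<\cS^{n_k}_{r_2,z}(a_2)$ for all $z\ge y$ and all large $k$. Letting $k\to\infty$ yields the non-strict inequality $\cS_{r_1,z}(a_1)\le\cS_{r_2,z}(a_2)$. The event does not depend on $y$ or $z$, so the conclusion holds simultaneously for all $z\ge y\ge r_1\vee r_2$.

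The main obstacle is the approximation step, i.e.\ the content of Lemma~\ref{lm:a1}. One must mollify $\sigma$ in $x$ (and $b$ in $x$, keeping continuity in $t$) so that the following hold at once:
\begin{itemize}
\item the sign of $\sigma$ is preserved, which is automatic for a positive kernel;
\item the derivative bounds in \ref{item:A2} hold, in particular $\int_\r(\partial_x^\alpha\sigma_n)^2\,\dd s\le K$, which may require truncating $\sigma$ in $x$ and $s$ before mollifying;
\item the bound on $a$ in \ref{item:A2} and the growth bounds~\eqref{eq:assume_lin_bound} are uniform in $n$;
\item the $L^2(\dd s)$ convergence~\eqref{eq:assume_sig_b} holds, which follows from the modulus \ref{item:1b} and the time continuity~\eqref{assu:conti_t}.
\end{itemize}
Given that lemma, the remaining steps are routine.
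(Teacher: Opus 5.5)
Your proposal is correct and follows the paper's route: approximate via Lemma~\ref{lm:a1}, apply Proposition~\ref{thm:conv_appro_1} along one subsequence that works for all of the countable set $\bD^2$, and pass the strict order preservation of the homeomorphic flows $\cS^n$ to the limit, exactly as in the proof of Proposition~\ref{prop:non-cros}. One small aside: Lemma~\ref{lm:a1} needs no truncation in $s$, because the cutoff $\zeta_n$ in $x$ together with \ref{item:2c} already bounds $\int_\r(\partial_x^\alpha\sigma_n)^2\,\dd s$ (cf.~\eqref{eq:a4}).
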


Finally, we state a convergence result with respect to the initial value.

\begin{proposition}\label{thm:conv_appro_2}
    Fix $(a,r)\in\r^2$, and let $(a_n)$ be a deterministic sequence with $a_n\downarrow a$. Suppose that $(\sigma,b)$ satisfy Assumption~\ref{assu:H} and conditions~\ref{item:1a}, \ref{item:1b}, \ref{item:2c}, and~\ref{item:2d}. Let $\cS_{r,\cdot}(a_n)$, $n\ge1$, and $\cS_{r,\cdot}(a)$ denote the unique strong solutions to~\eqref{eq:SDE_gener} with initial values $a_n$ and $a$. Then, almost surely, 
    $$
    \lim_{a_n\downarrow a}\cS_{r,x}(a_n)=\cS_{r,x}(a),
    $$
    uniformly on compact intervals.
\end{proposition}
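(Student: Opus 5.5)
We prove Proposition~\ref{thm:conv_appro_2} by monotonicity together with an $L^1$ identification of the limit. The first step is to establish comparison for a fixed countable family. For $a_n\downarrow a$, the solutions $\cS_{r,\cdot}(a_n)$ and $\cS_{r,\cdot}(a)$ are all driven by the same $\cW$ from the same time $r$. The standard Yamada--Watanabe type comparison (the one used for pathwise uniqueness in Appendix~\ref{app:path_unique}, applying the Tanaka/Le Gall local time argument to $(X-Y)^+$ under~\ref{item:1b} and the concave modulus $r_m$ in~\ref{item:2d}) gives, almost surely for all $x\ge r$ and all $n$,
\[
\cS_{r,x}(a)\le \cS_{r,x}(a_{n+1})\le \cS_{r,x}(a_n).
\]
One could alternatively try Proposition~\ref{cor:non_cross} after rescaling to dyadic points, but the direct comparison is cleaner because the starting points here are arbitrary reals. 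Consequently, almost surely the pointwise limit $Z(x):=\lim_n \cS_{r,x}(a_n)$ exists and satisfies $Z(x)\ge \cS_{r,x}(a)$.

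The second step identifies the limit in $L^1$. We aim to show $\e[\sup_{r\le x\le T}|\cS_{r,x}(a_n)-\cS_{r,x}(a)|]\to0$. Set $D_n(x):=\cS_{r,x}(a_n)-\cS_{r,x}(a)\ge 0$. Its Itô decomposition consists of $a_n-a$, plus a martingale $M_n$ with bracket $\int_r^x\int_\r(\sigma(\cS(a_n))-\sigma(\cS(a)))^2\,\dd s\,\dd u$, plus a drift bounded by $\int r_m(D_n)\,\dd u$ on the event that both paths stay in $[-m,m]$. We localize with $\tau_m$, the exit time from $[-m,m]$; by the moment bound~\eqref{eq:lm_S}, which is uniform over $a\in K$, $\p(\tau_m\le T)$ is small uniformly in $n$. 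Since $D_n\ge0$ has no local time term at zero, taking expectations yields
\[
\e[D_n(x\wedge\tau_m)]\le (a_n-a)+\int_r^x r_m\big(\e D_n(u\wedge\tau_m)\big)\,\dd u,
\]
using Jensen's inequality for the concave $r_m$. Bihari's inequality with $\int_{0+}r_m^{-1}=\infty$ then gives $\sup_{x\le T}\e D_n(x\wedge\tau_m)\to0$.

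The third step upgrades this to uniform convergence. Because $D_n$ decreases in $n$, $\sup_{x\le T}D_n(x)$ decreases almost surely to some limit $L\ge0$. We must show $L=0$, i.e.\ that the pointwise limit $Z$ coincides with $\cS_{r,\cdot}(a)$ uniformly on $[r,T]$. Pointwise, Fatou's lemma applied to step~2 gives $Z(x)=\cS_{r,x}(a)$ almost surely for each rational $x$. Continuity of the limit is the delicate part. Here we use the uniform Kolmogorov bound~\eqref{eq:lm_S}: the paths $\cS_{r,\cdot}(a_n)$ are equicontinuous in probability, so along a subsequence they converge uniformly almost surely to a continuous limit, which must equal $\cS_{r,\cdot}(a)$ on the rationals and hence everywhere. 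Monotonicity in $n$ then upgrades subsequential convergence to convergence of the full sequence, since for a monotone sequence of continuous functions converging pointwise to a continuous limit on a compact set, Dini's theorem gives uniform convergence. Finally we let $T\to\infty$ through the integers.

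The main obstacle is the first step: rigorously establishing almost-sure ordering under only the non-Lipschitz conditions~\ref{item:1b} and~\ref{item:2d}. This requires the local-time-at-zero argument ($\int_{0+}\rho_m^{-2}=\infty$) to kill $L^0(D)$, exactly as in the pathwise uniqueness proof, applied to $(\cS(a)-\cS(a_n))^+$.
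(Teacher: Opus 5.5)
Your argument is correct, but it reaches the result by a different route than the paper.

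\textbf{The paper's route.} The paper's proof is soft, in three steps.
\begin{enumerate}
\item It takes tightness from \eqref{eq:lm_tight}.
\item It reruns the Skorokhod-representation argument of Proposition~\ref{thm:conv_appro_1}, so the limit is identified through the machinery of Appendix~\ref{app:conv} together with pathwise uniqueness. This gives $L^2$ convergence of $\sup_{x\le T}|\cS_{r,x}(a_n)-\cS_{r,x}(a)|$, hence an a.s.\ convergent subsequence.
\item It invokes the non-crossing Proposition~\ref{cor:non_cross} to pass from the subsequence to the whole monotone sequence. That proposition comes from smooth approximations and Kunita's homeomorphic flows, and it is applied to the non-dyadic family $\{a\}\cup\{a_n\}$ via the extension noted in the proof of Lemma~\ref{prop:density2}.
\end{enumerate}

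\textbf{Your route.} You replace both ingredients with direct one-dimensional arguments. Monotonicity comes from a Yamada--Watanabe/Le Gall comparison theorem. The limit is identified by an Osgood--Bihari $L^1$ stability estimate in the initial value. You keep only the Kolmogorov bound, for the passage to uniform convergence.

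\textbf{What each buys.} Your route is self-contained and gives a quantitative localized $L^1$ bound in terms of $a_n-a$. It needs neither the Skorokhod/Appendix~\ref{app:conv} identification nor the constant-sign hypothesis that Appendix uses. It also treats arbitrary real starting points directly, without the countable-family extension of the non-crossing property. The paper's route buys economy: the tightness/identification argument and the non-crossing property are already in place and are reused throughout.

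\textbf{A small point in your Step~3.} Equicontinuity in probability by itself only yields subsequential convergence in law. What gives a.s.\ uniform convergence along a subsequence is combining it with the pointwise a.s.\ monotone convergence you already have. That combination makes $(\cS_{r,\cdot}(a_n))_n$ Cauchy in probability in $C([r,T])$. You should say this explicitly. Alternatively, you can bound $\e\sup_{x\le T}D_n(x\wedge\tau_m)$ directly via BDG, using $\rho_m(0+)=0$ and dominated convergence. With that made explicit, the argument is complete.
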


\begin{proof}
By~\eqref{eq:lm_tight} and Theorem~4.2 in \cite[Chapter~I]{SDE81}, the sequence $\{\cS_{r,\cdot}(a_n)\}_{n\ge 1}$ is tight. Following the argument in the proof of Proposition~\ref{thm:conv_appro_1}, there exists a subsequence converging to $\cS_{r,\cdot}(a)$ in $C([r,\infty))$ almost surely. The non-crossing property in Proposition~\ref{cor:non_cross} then completes the proof.
\end{proof}


\section{Proof of the main theorems}\label{sec:exist}

This section proves the main results. In Section~\ref{subsec:exist}, we establish the existence and uniqueness of the stochastic flow. Section~\ref{subsec:dual} then studies the SDE formulation of the dual flow, leading to a proof of Theorem~\ref{thm:backward_flow_SDE}.

\subsection{Existence and uniqueness}\label{subsec:exist}

For any $(a,r)\in \bD^2$, let $\cS_{r,\cdot}(a)$ denote the almost sure unique strong solution to~\eqref{eq:SDE_gener}, and for each $x\in \r$, define
\begin{align}\label{eq:def_M}
    M(x):=\{\cS_{r,x}(a):\, r<x,\, (a,r)\in \bD^2\}.
\end{align}
The following two lemmas serve as auxiliary results, providing the necessary tools for establishing the existence and uniqueness of the stochastic flow.

\begin{lemma}\label{prop:density}
    Suppose that $(\sigma,b)$ satisfy Assumption~\ref{assu:H} and conditions \ref{item:1a}, \ref{item:1b}, \ref{item:2c}, and \ref{item:2d}. Then, almost surely, for any $x\in \r$, the set $M(x)$ is dense in $\r$.
\end{lemma}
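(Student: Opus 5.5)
We need to show: almost surely, for every $x\in\r$, the set $M(x)=\{\cS_{r,x}(a): r<x,\ (a,r)\in\bD^2\}$ is dense in $\r$. The plan rests on two ingredients: flow lines started close to $x$ do not move far, and monotonicity lets us pass from countably many $x$ to all $x$.

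\textbf{Step 1: reduce to a local statement.} It suffices to prove that almost surely, for every rational interval $(c,d)$ and every $x\in\r$, some $(a,r)\in\bD^2$ with $r<x$ satisfies $\cS_{r,x}(a)\in(c,d)$. Fix $(c,d)$ and a compact time window $[-N,N]$. Since there are only countably many such choices, it is enough to handle them one at a time and then take a countable intersection.

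\textbf{Step 2: modulus of continuity at the starting point, uniformly over dyadic starts.} Lemma~\ref{lm:kanke} gives $\e\bigl[\sup_{r\le v\le r+h}|\cS_{r,v}(a)-a|^4\bigr]\le Ch^2$, uniformly for $a$ in a compact set and $r\in[-N-1,N]$. Fix a level $n$ and set $h=2^{-n}$. Take the dyadic grid $r=k2^{-n}\in[-N-1,N]$ and $a=c+\tfrac{d-c}{2}$, which is a dyadic point once we choose $c,d$ dyadic (we may do so since dyadic intervals generate the topology). For each grid point, Chebyshev bounds the probability that the line leaves $(c,d)$ within time $2\cdot 2^{-n}$ by
\[
C(2\cdot 2^{-n})^2\bigl((d-c)/2\bigr)^{-4}.
\]
There are $O(N2^{n})$ grid points, so the union bound gives $O(N2^{-n})$. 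This is summable in $n$, so Borel--Cantelli applies: almost surely, for all large $n$, every line started at $(a,k2^{-n})$ stays in $(c,d)$ on $[k2^{-n},k2^{-n}+2^{1-n}]$.

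\textbf{Step 3: cover every $x$.} Given any $x\in[-N,N]$ and large $n$, choose $k$ with $k2^{-n}<x\le k2^{-n}+2^{1-n}$; for example $k=\lceil x2^n\rceil-1$. Then $r=k2^{-n}<x$ and $\cS_{r,x}(a)\in(c,d)$. Hence $M(x)\cap(c,d)\neq\emptyset$ simultaneously for all $x\in[-N,N]$, on a single almost sure event. Intersecting over $N$ and over dyadic $(c,d)$ finishes the argument.

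\textbf{Main obstacle.} The delicate point is the uniformity in $x$ over an uncountable set. This is handled by the supremum-in-time moment bound combined with the exponent $2>1$ in Lemma~\ref{lm:kanke}, which makes the union bound summable. Monotonicity is not even needed. If the fourth-moment bound were only available without the supremum, I would fall back on the non-crossing property (Proposition~\ref{cor:non_cross}). In that case I would sandwich: pick dyadic starts $a_1<a_2$ in $(c,d)$ and use lines from nearby points to trap the flow between them.
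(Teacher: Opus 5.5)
Your proposal is correct and is essentially the paper's argument. Both fix a dyadic level, start flow lines at dyadic times on a grid of mesh $t$, and combine the sup-in-time fourth-moment bound \eqref{eq:lm_S} with Chebyshev and a union bound to get failure probability $O(t)$ on a compact time window. The only difference is cosmetic: you use Borel--Cantelli along $t=2^{-n}$, whereas the paper observes that the bad event does not depend on the mesh, so its probability is at most $2h^{-4}CKt\to0$. Like the paper, you tacitly use that the constant in Lemma~\ref{lm:kanke} is uniform over starting times $r$ in a compact set.
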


\begin{proof}
    It suffices to show that for any fixed dyadic numbers $h,K>0$ and $a$, almost surely, $M(x)\cap (a-h,a+h)\neq \varnothing$ for all $x\in (-K,K]$. Let $k\in \mathbb{N}_+$ such that $t:=K2^{-k}$. For every $y\in(-K,K]$ there is an integer $-2^k\le j<2^k$ with $y\in(jt,(j+1)t]$. Hence,
    \begin{multline*}
        \p \bigl(\exists y\in(-K,K]:\, M(y)\cap(a-h,a+h)=\varnothing \bigr) \le  \sum_{j=-2^k}^{2^k-1}\p \bigg(\sup_{y\in[jt,(j+1)t]}|\cS_{jt,y}(a)-a|\ge h\bigg)\\
        \le \sum_{j=-2^k}^{2^k-1} \frac{1}{h^4} \e \bigg[\sup_{y\in[jt,(j+1)t]}|\cS_{jt,y}(a)-a|^4\bigg]\le  2h^{-4}CKt \;\longrightarrow\; 0, \qquad \text{ as }\  t\rightarrow 0,
    \end{multline*}
    where the last inequality follows from~\eqref{eq:lm_S}. This completes the proof.
\end{proof}

\begin{lemma}\label{prop:density2}
    Fix $(a,r)\in \r^2$, and let $\cS_{r,\cdot}(a)$ denote the unique strong solution to~\eqref{eq:SDE_gener}. Under the assumptions in Lemma~\ref{prop:density}, almost surely,  
    \begin{align}\label{eq:cS_D}
        \cS_{r,x}(a) = \inf \left\{ \cS_{r_n,x}(a_n):\, r_n< r,\, \cS_{r_n,r}(a_n)>a,\, (a_n,r_n)\in \bD^2 \right\}.
    \end{align}
\end{lemma}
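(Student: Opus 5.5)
We prove two inequalities. Write $I(x)$ for the right-hand side of~\eqref{eq:cS_D}; the set indexing it is nonempty by Lemma~\ref{prop:density}. We work on the almost sure event where three things hold simultaneously for the countably many relevant starting points: Lemma~\ref{prop:density}, Proposition~\ref{cor:non_cross}, and the conclusion of Proposition~\ref{cor:non_cross} extended to include the fixed pair $(a,r)$ together with all dyadic pairs. This extension follows from the same approximation argument, since adding one deterministic starting point keeps the family countable.

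\emph{Lower bound $\cS_{r,x}(a)\le I(x)$.} Take any $(a_n,r_n)\in\bD^2$ with $r_n<r$ and $\cS_{r_n,r}(a_n)>a=\cS_{r,r}(a)$. At time $y=r$ the line from $(a,r)$ lies strictly below the line from $(a_n,r_n)$. The non-crossing property then gives $\cS_{r,x}(a)\le\cS_{r_n,x}(a_n)$ for all $x\ge r$. Taking the infimum yields the inequality.

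\emph{Upper bound $I(x)\le\cS_{r,x}(a)$.} The idea is to combine Proposition~\ref{thm:conv_appro_2} (continuity in the initial value from above) with the flow property at time $r$. Fix $\varepsilon>0$ and a compact horizon $T$. By Proposition~\ref{thm:conv_appro_2}, choose a dyadic $a'>a$ with $\sup_{x\in[r,T]}|\cS_{r,x}(a')-\cS_{r,x}(a)|<\varepsilon$. To do this countably, restrict to $a'=a+2^{-k}$ and apply Proposition~\ref{thm:conv_appro_2} along this deterministic sequence.

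Next find a dyadic line that at time $r$ sits strictly between $a$ and $a'$. By Lemma~\ref{prop:density}, $M(r)$ is dense, so there exist $(a_n,r_n)\in\bD^2$ with $r_n<r$ and $a<\cS_{r_n,r}(a_n)<a'$. Non-crossing then gives $\cS_{r_n,x}(a_n)\le\cS_{r,x}(a')$ for $x\ge r$. Hence $I(x)\le \cS_{r,x}(a')<\cS_{r,x}(a)+\varepsilon$ uniformly on $[r,T]$. Letting $\varepsilon\downarrow0$ and $T\uparrow\infty$ along countable sequences gives equality for all $x\ge r$ simultaneously on one full-measure event.

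\emph{Main obstacle.} The delicate step is non-crossing between the line from the non-dyadic point $(a,r)$ and the dyadic lines. Proposition~\ref{cor:non_cross} is stated only for $\bD^2$. I would handle this by rerunning its proof (approximation by Brownian flows via Lemma~\ref{lm:a1} and Proposition~\ref{thm:conv_appro_1}) on the countable set $\bD^2\cup\{(a,r)\}\cup\{(a+2^{-k},r)\}_k$. If that is undesirable, a fallback is available: compare $\cS_{r_n,\cdot}(a_n)$ with $\cS_{r,\cdot}(a)$ by pathwise uniqueness after time $r$. The two processes solve the same SDE from time $r$ with ordered initial values, so a comparison theorem (valid under~\ref{item:1b} and~\ref{item:2d} via the Yamada-type argument used for pathwise uniqueness) yields the ordering almost surely.
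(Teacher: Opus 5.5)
Your proposal is correct and follows the paper's proof essentially step for step. The paper also extends Proposition~\ref{cor:non_cross} to the countable family $\bD^2\cup\{(a_n,r):\, a_n\in\bD\cup\{a\}\}$, and gets $\cS_{r,x}(a)\le$ the infimum from non-crossing at level $r$. For the reverse inequality it uses the density of $M(r)$ to place a dyadic line between $a$ and $a_n$ at level $r$, then lets $a_n\downarrow a$ via Proposition~\ref{thm:conv_appro_2}. Your one slip, calling $a+2^{-k}$ dyadic (it is not unless $a$ is), is harmless because you add these starting points to the countable family explicitly.
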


\begin{proof}
    Replacing $\bD^2$ by $\bD^2\cup \{(a_n,r):\, a_n\in \bD\cup \{a\} \}$ in Proposition~\ref{cor:non_cross}, the non-crossing property remains valid. It follows that $\cS_{r,x}(a)$ is bounded above by the right-hand side of~\eqref{eq:cS_D}. 
    For any $\bD\ni a_n>a$, since $M(r)$ is dense, there exist $(a_n',r_n')\in \bD^2$ with $r_n'<r$ such that $\cS_{r_n',r}(a_n')\in (a,a_n)$. Thus, $\cS_{r_n',x}(a_n')\le \cS_{r,x}(a_n)$ by the non-crossing property. Letting $a_n\downarrow a$, by Proposition~\ref{thm:conv_appro_2}, we conclude that the right-hand side of~\eqref{eq:cS_D} is bounded above by $\cS_{r,x}(a)$, completing the proof.
\end{proof}

The next result provides the existence and uniqueness of the stochastic flow.

\begin{theorem}\label{thm:ex}
    Suppose that $(\sigma,b)$ satisfy Assumption~\ref{assu:H} and conditions \ref{item:1a}, \ref{item:1b}, \ref{item:2c}, and \ref{item:2d}. Then the associated stochastic flow $\cS$ defined in Definition~\ref{def:flow} exists and is unique.
\end{theorem}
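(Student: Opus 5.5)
The plan is to build the flow from the countable skeleton of dyadic flow lines, extend it to all starting points through the infimum formula of Lemma~\ref{prop:density2}, and obtain uniqueness from that same formula.

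\textbf{Step 1: the skeleton.} Fix a full-probability event $\Omega_0$ on which the following hold simultaneously:
\begin{itemize}
\item[(a)] for every $(a,r)\in\bD^2$, the strong solution $\cS_{r,\cdot}(a)$ of~\eqref{eq:SDE_gener} exists and is continuous;
\item[(b)] the non-crossing property of Proposition~\ref{cor:non_cross} holds for all dyadic pairs;
\item[(c)] $M(x)$ is dense for every $x$, by Lemma~\ref{prop:density}.
\end{itemize}

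\textbf{Step 2: definition for general starting points.} For general $(a,r)\in\r^2$ and $x>r$, set
\[
\widetilde\cS_{r,x}(a):=\inf\bigl\{\cS_{r',x}(a'):\ r'<r,\ (a',r')\in\bD^2,\ \cS_{r',r}(a')>a\bigr\},
\]
and put $\widetilde\cS_{r,r}(a):=a$. For dyadic $(a,r)$ I keep the original solution rather than this formula.

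The key point is that for each fixed $(a,r)$, Lemma~\ref{prop:density2} gives $\widetilde\cS_{r,\cdot}(a)=\cS_{r,\cdot}(a)$ almost surely. Hence each line is a.s.\ the unique strong solution, which is the first requirement of Definition~\ref{def:flow}.

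\textbf{Step 3: checking (R1)--(R3) on $\Omega_0$, simultaneously for all starting points.}
\begin{itemize}
\item (R1) holds by construction.
\item (R3): Suppose $\widetilde\cS_{r_1,y}(a_1)<\widetilde\cS_{r_2,y}(a_2)$. By density of $M(y)$, pick a dyadic line whose value at time $y$ lies strictly between these two values. The infimum definition together with dyadic non-crossing then sandwiches the two lines at every later time $z$. I would also check that a general line agrees with its own infimum representation at intermediate times, i.e.\ the flow/consistency property: $\widetilde\cS_{r,x}(a)$ equals the infimum of dyadic lines lying above $\widetilde\cS_{r,y}(a)$ at time $y$, for $r<y<x$. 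This follows from dyadic non-crossing and density.
\item (R2): $a\mapsto\widetilde\cS_{r,x}(a)$ is nondecreasing by monotonicity of the index sets. Right-continuity follows because the infimum over $\{\cS_{r',r}(a')>a\}$ equals the limit of the infima over $\{\cS_{r',r}(a')>a_n\}$ as $a_n\downarrow a$, since these sets increase to the former set. A monotone, right-continuous function has left limits, so the map is càdlàg.
\end{itemize}

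\textbf{Step 4: uniqueness.} Let $\cS'$ be another flow satisfying the definition. Almost surely it agrees with $\cS$ on all dyadic starting points, since there are countably many and each is a.s.\ the unique strong solution. Applying (R3) for $\cS'$ against the dyadic lines gives
\[
\cS'_{r,x}(a)\le \cS'_{r',x}(a')\quad\text{whenever }\cS_{r',r}(a')>a,
\]
so $\cS'_{r,x}(a)\le\widetilde\cS_{r,x}(a)$. For the reverse inequality, take dyadic $a''>a$. By density, some dyadic line started before $r$ passes through $(a,a'')$ at time $r$. Then by (R3), $\widetilde\cS_{r,x}(a)\le\cS'_{r,x}(a'')$. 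Letting $a''\downarrow a$, (R2) right-continuity of $\cS'$ gives $\widetilde\cS\le\cS'$. Thus the two flows agree for all $(a,r)$ simultaneously.

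\textbf{Main obstacle.} The delicate step is the simultaneous (R3) for non-dyadic starting points, in particular the consistency of the infimum representation at intermediate times. This is where strict versus non-strict inequalities need care, and density of $M(y)$ is used essentially.
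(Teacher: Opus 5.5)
Your route is the paper's own: non-dyadic lines are defined by the infimum formula \eqref{eq:cS_D}, (R2) comes from monotonicity of the index sets, (R3) comes from separating with a dyadic line, and uniqueness comes from the same two-sided squeeze using (R1)--(R3) of the competitor and density of $M(r)$. The load-bearing steps are correct, but one claim you make is false and one detail is missing.

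\textbf{The false claim.} You list a ``consistency at intermediate times'' under (R3) and call it the main obstacle. Drop it. The equality $\widetilde\cS_{r,x}(a)=\widetilde\cS_{y,x}\bigl(\widetilde\cS_{r,y}(a)\bigr)$ for all $r<y<x$ does not follow from dyadic non-crossing and density, and it fails in general for coalescing flows. Suppose the trajectory from $(a,r)$ passes at time $y$ through a bifurcation point, where several flow lines emerge, and continues along a lower branch. The right-hand side is the infimum of dyadic lines strictly above $\widetilde\cS_{r,y}(a)$ at time $y$, so it follows the uppermost branch instead. Non-crossing and density give only $\widetilde\cS_{r,x}(a)\le\widetilde\cS_{y,x}\bigl(\widetilde\cS_{r,y}(a)\bigr)$, which is exactly what the paper records in \eqref{eq:one_per_flow}.

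\textbf{Why (R3) does not need it.} The index set $J(a,r):=\{(a',r')\in\bD^2:\ r'<r,\ \cS_{r',r}(a')>a\}$ does not depend on the evaluation time. Let $L$ be a dyadic line whose value at $y$ lies strictly between $\widetilde\cS_{r_1,y}(a_1)$ and $\widetilde\cS_{r_2,y}(a_2)$.
\begin{itemize}
\item Every line in $J(a_2,r_2)$ is at least $\widetilde\cS_{r_2,y}(a_2)>L(y)$ at time $y$. By dyadic non-crossing it stays above $L$ afterwards, so $L(z)\le\widetilde\cS_{r_2,z}(a_2)$.
\item Since $L(y)$ exceeds the infimum over $J(a_1,r_1)$ at time $y$, some line $L''$ in $J(a_1,r_1)$ has $L''(y)<L(y)$. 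Hence $\widetilde\cS_{r_1,z}(a_1)\le L''(z)\le L(z)$.
\end{itemize}
The paper skips even the density step: it takes the separating line directly from $J(a_1,r_1)$, with value in $[\widetilde\cS_{r_1,y}(a_1),\widetilde\cS_{r_2,y}(a_2))$ at time $y$.

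\textbf{The missing detail.} You keep the original strong solutions at dyadic starting points. So $\Omega_0$ must also contain the countably many almost sure identities from Lemma~\ref{prop:density2} at dyadic $(a,r)$. Without them, right-continuity of $a\mapsto\widetilde\cS_{r,x}(a)$ at a dyadic $a$ is unjustified. Your argument identifies the right limit with the infimum over $J(a,r)$, not with the solution $\cS_{r,x}(a)$.
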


\begin{proof}
    For any $(a,r)\in \bD^2$, let $\cS_{r,\cdot}(a)$ denote the unique strong solution to~\eqref{eq:SDE_gener}. For any general $(a,r)\in \r^2$, we define the flow line $\cS_{r,\cdot}(a)$ via~\eqref{eq:cS_D}. Lemmas~\ref{prop:density} and~\ref{prop:density2} guarantee the consistency and well-definedness of this construction.

    \medskip
    \noindent
    \textbf{Existence.} By Lemma~\ref{prop:density2}, for any $(a,r)\in \r^2$, the strong solution to~\eqref{eq:SDE_gener} necessarily satisfies~\eqref{eq:cS_D} almost surely. Hence, $\cS_{r,\cdot}(a)$ is the almost sure unique strong solution of~\eqref{eq:SDE_gener}. 
    Property~\ref{item:R1} for $\cS$ follows directly from Lemma~\ref{prop:density}.
    By~\eqref{eq:cS_D}, for any $r\le x$, the map $a\mapsto \cS_{r,x}(a)$ is nondecreasing and right-continuous, which implies property~\ref{item:R2}.
    To verify property~\ref{item:R3}, let $(a_1,r_1), (a_2,r_2)\in \r^2$ and $y\ge \max\{r_1,r_2\}$ with $\cS_{r_1,y}(a_1)<\cS_{r_2,y}(a_2)$. By~\eqref{eq:cS_D}, there exists $(a',r')\in \bD^2$ with $r'<r_1$ such that $\cS_{r',r_1}(a')>a_1$ and $\cS_{r',y}(a')\in [\cS_{r_1,y}(a_1), \cS_{r_2,y}(a_2))$. For any $(a_n',r_n')\in \bD^2$ with $r_n'<r_2$ and $\cS_{r_n',r_2}(a_n')>a_2$, we have $\cS_{r_n',y}(a_n')\ge \cS_{r_2,y}(a_2)>\cS_{r',y}(a')$. It follows from Proposition~\ref{cor:non_cross} and~\eqref{eq:cS_D} that $\cS_{r_n', z}(a_n')\ge \cS_{r', z}(a')\ge \cS_{r_1,z}(a_1)$ for any $z\ge y$. Taking the infimum over all such $(a_n',r_n')\in \bD^2$ then yields property~\ref{item:R3}. Therefore, the family $\{(\cS_{r,x}(a),\, x\ge r)\}_{(a,r)\in\r^2}$ constructed above indeed defines a stochastic flow.

    \medskip
    \noindent
    \textbf{Uniqueness.} Suppose that $\tcS$ is another stochastic flow as in Definition~\ref{def:flow}. Then, $(\tcS_{r,x}(a),\, x \ge r)_{(a,r)\in \bD^2}$ and $(\cS_{r,x}(a),\, x \ge r)_{(a,r)\in \bD^2}$ coincide almost surely, since both are given by the unique strong solutions to~\eqref{eq:SDE_gener}. It therefore suffices to verify that $\tcS$ satisfies~\eqref{eq:cS_D}. By property~\ref{item:R3}, $\tcS_{r,x}(a)$ is almost surely bounded above by the right-hand side of~\eqref{eq:cS_D}. Using property~\ref{item:R2}, almost surely, for any $x\ge r$, 
    $$\lim_{a_n\downarrow a}\tcS_{r,x}(a_n)=\tcS_{r,x}(a).$$
    By Lemma~\ref{prop:density}, we may choose $(a_n',r_n')\in \bD^2$ with $r_n'<r$ such that $\tcS_{r_n',r}(a_n')\in (a,a_n)$. Property~\ref{item:R3} then yields $\tcS_{r_n',x}(a_n')\le \tcS_{r,x}(a_n)$. Letting $a_n \downarrow a$, we conclude that the right-hand side of~\eqref{eq:cS_D} is almost surely bounded above by $\tcS_{r,x}(a)$. Hence, $\tcS_{r,x}(a)$ is uniquely determined by~\eqref{eq:cS_D}, and therefore $\tcS = \cS$ almost surely.
\end{proof}

\begin{corollary}\label{cor:ex}
    Under the assumptions of Theorem~\ref{thm:ex}, let $\cS$ be the associated stochastic flow. Then, almost surely, for all $(a,r)\in\r^2$ and $x\ge r$, $\cS_{r,x}(a)$ satisfies~\eqref{eq:cS_D}.
\end{corollary}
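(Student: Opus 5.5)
The plan is to work with the flow $\cS$ built in the proof of Theorem~\ref{thm:ex}, upgrade the almost-sure identity of Lemma~\ref{prop:density2} from a fixed point to a simultaneous statement for all $(a,r)\in\r^2$ and $x\ge r$, and obtain it from the construction together with the uniqueness part of Theorem~\ref{thm:ex}. The key observation is that for general $(a,r)$ the flow line was \emph{defined} through~\eqref{eq:cS_D}, whereas for $(a,r)\in\bD^2$ it is the strong solution. So only one thing needs checking: on a single full-probability event, the dyadic lines themselves satisfy~\eqref{eq:cS_D}, and the defining formula is self-consistent.

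\textbf{Step 1.} Fix the event $\Omega_0$ on which the following hold simultaneously:
\begin{itemize}
\item[(a)] Lemma~\ref{prop:density} holds, so each $M(x)$ is dense;
\item[(b)] the non-crossing property of Proposition~\ref{cor:non_cross} holds for all dyadic pairs;
\item[(c)] for every $(a,r)\in\bD^2$, the right-continuity $\cS_{r,\cdot}(a_n)\to\cS_{r,\cdot}(a)$ along dyadic $a_n\downarrow a$ holds, uniformly on compacts.
\end{itemize}
Item (c) is a countable intersection of the almost-sure events given by Proposition~\ref{thm:conv_appro_2}, so $\Omega_0$ has full probability.

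\textbf{Step 2.} On $\Omega_0$, repeat the argument of Lemma~\ref{prop:density2} deterministically for every dyadic $(a,r)$; this gives~\eqref{eq:cS_D} for all dyadic starting points at once. For general $(a,r)$, identity~\eqref{eq:cS_D} holds by definition.

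\textbf{Step 3.} Check that this pathwise-defined object is the flow of Theorem~\ref{thm:ex}. By the uniqueness part of Theorem~\ref{thm:ex}, any stochastic flow coincides almost surely with it, so the identity transfers to the given $\cS$ on a full-measure event.

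The main obstacle is Step 2. There the lines $\cS_{r_n,\cdot}(a_n)$ with $\cS_{r_n,r}(a_n)>a$ must approximate $\cS_{r,\cdot}(a)$ from above for every dyadic $(a,r)$ simultaneously. The plan handles this as follows:
\begin{itemize}
\item By density of $M(r)$ (item (a)), pick dyadic $(a_n',r_n')$ with $r_n'<r$ and $\cS_{r_n',r}(a_n')\in(a,a_n)$, where $a_n>a$ are dyadic.
\item Non-crossing (item (b)) then gives $\cS_{r_n',x}(a_n')\le\cS_{r,x}(a_n)$.
\item Letting $a_n\downarrow a$, the countable right-continuity in item (c) yields that the right-hand side of~\eqref{eq:cS_D} is at most $\cS_{r,x}(a)$.
\end{itemize}
The reverse inequality follows from non-crossing alone. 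One needs the strict hypothesis $\cS_{r_n,r}(a_n)>a$, compared at time $y=r$ with $\cS_{r,r}(a)=a$, to feed into Proposition~\ref{cor:non_cross} applied to the two dyadic starting points.
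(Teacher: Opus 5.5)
Your proposal is correct and matches the paper's implicit argument: the corollary is read off from the proof of Theorem~\ref{thm:ex}, where non-dyadic lines are defined by~\eqref{eq:cS_D}, dyadic lines satisfy it by the argument of Lemma~\ref{prop:density2} run on a single full-measure event, and the uniqueness argument transfers the identity to any version of the flow (that argument uses only \ref{item:R1}--\ref{item:R3} and the density of $M(x)$, so it holds simultaneously for all $(a,r)$). The only care needed in your Step~1(c) is to fix one deterministic dyadic sequence $a_n\downarrow a$ for each dyadic point, which is all that your Step~2 uses.
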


\subsection{Dual flow}\label{subsec:dual}
By Proposition~\ref{thm:Brownian_flow_back}, the backward SDE can be derived from the forward one under sufficient regularity. The goal of this section is to extend this result to the case where the coefficients $(\sigma,b)$ themselves lack the required regularity but arise as limits of $(\sigma_n,b_n)$.

Under the assumptions of Theorem~\ref{thm:backward_flow_SDE}, Theorem~\ref{thm:ex} guarantees the existence and uniqueness of the stochastic flows $\cS$ and $\cS^{\mathrm{b}}$, whose trajectories $(\cS_{r,x}(a),\, x\ge r)$ and $(\cS^{\mathrm{b}}_{-r,-x}(a),\, x\ge r)$ are almost surely the strong solutions to~\eqref{eq:SDE_gener} and~\eqref{eq:back_flow}, respectively. Applying Corollary~\ref{cor:ex} to $\cS^{\mathrm{b}}$, we obtain that, almost surely, for any $(b,r)\in \r^2$,
\begin{align}\label{eq:cS_b}
    \cS^{\mathrm{b}}_{r,x}(b)
    = \inf \left\{ \cS^{\mathrm{b}}_{r_n,x}(b_n):\, r_n> r,\, \cS^{\mathrm{b}}_{r_n,r}(b_n)>b,\, (b_n,r_n)\in \bD^2 \right\}.
\end{align}

\begin{proof}[Proof of Theorem~\ref{thm:backward_flow_SDE}]
It suffices to show that the backward flow $\cS^{\mathrm{b}}$ coincides with $\cS^*$ as defined in~\eqref{eq:def_dual}, that is, for any $(b,r)\in \r^2$ and $x\le r$,
\begin{align}\label{eq:back_SDE_dual}
    \cS^{{\mathrm{b}}}_{r, x}(b)=\inf \{a \in \r:\, \cS_{x,r}(a)>b \}.
\end{align}

Suppose $a \in \r$ satisfies $\cS_{x,r}(a) \le b$. For any $(b_n,r_n)\in \bD^2$ with $r_n > r$ and $\cS^{\mathrm{b}}_{r_n,r}(b_n) > b $,~\eqref{eq:cS_D} gives $(a_n,x_n)\in \bD^2$ with $x_n<x$, $\cS_{x_n,x}(a_n)>a$ and $\cS_{x_n,r}(a_n)\in [\cS_{x,r}(a), \cS^{\mathrm{b}}_{r_n,r}(b_n)) $. Proposition~\ref{prop:non-cros} then implies $\cS^{\mathrm{b}}_{r_n,x}(b_n) \ge \cS_{x_n,x}(a_n)>a$, and~\eqref{eq:cS_b} yields that $\cS^{\mathrm{b}}_{r,x}(b) \ge a$.

Conversely, suppose $a \in \r$ satisfies $\cS_{x,r}(a) > b$. By Lemma~\ref{prop:density} applied to $\cS^{\mathrm{b}}$, there exist $(b_n,r_n)\in \bD^2$ with $r_n > r$ such that $\cS^{\mathrm{b}}_{r_n,r}(b_n)\in (b,\cS_{x,r}(a))$.
For any $(a_n,x_n)\in \bD^2$ with $x_n<x$ and $\cS_{x_n,x}(a_n)>a$,~\eqref{eq:cS_D} gives $\cS_{x_n,r}(a_n)\ge \cS_{x,r}(a)>\cS^{\mathrm{b}}_{r_n,r}(b_n)$. Applying Proposition~\ref{prop:non-cros}, we obtain $\cS_{x_n,x}(a_n)\ge \cS^{\mathrm{b}}_{r_n,x}(b_n)$, and taking the infimum yields $a\ge \cS^{\mathrm{b}}_{r,x}(b)$. This completes the proof of~\eqref{eq:back_SDE_dual}. 
\end{proof}

\subsection{Non-crossing property}\label{sec:non-cros}

Recall that $\overline{\cS}$ is obtained by gluing the flow lines of $\cS$ and $\cS^{\mathrm b}$ at their starting points as in~\eqref{def:overline_S}. By the proof of Theorem~\ref{thm:backward_flow_SDE}, this is equivalent to gluing $\cS$ with its dual flow $\cS^*$. In this section, we establish a non-crossing property for the two-sided flow $\overline{\cS}$, extending Proposition~\ref{prop:non-cros} to the whole plane.

For any $a'>\cS_{r,x}(a)$, properties~\ref{item:R1} and~\ref{item:R3} imply that $\cS_{x,y}(a')\ge \cS_{r,y}(a)$ for all $y \ge x$. Taking the infimum over all such $a'$ and using property~\ref{item:R2}, we obtain that, almost surely, for all $(a,r) \in \r^2$ and $y \ge x \ge r$,
\begin{align}\label{eq:one_per_flow}
    \cS_{r,y}(a) \le \cS_{x,y}\circ \cS_{r,x}(a).
\end{align}

\begin{proposition}[Non-crossing property]\label{prop:non-crossing}
     Suppose that $(\sigma,b)$ satisfy Assumption~\ref{assu:H} and conditions~\ref{item:1a}--\ref{item:1d}. Then, almost surely, for all $(a_1,r_1),(a_2,r_2)\in \r^2$ and all $y,z\in \r$,
    \begin{align*}
        \overline\cS_{r_1,y}(a_1)< \overline\cS_{r_2,y}(a_2) \ \Longrightarrow \ \overline\cS_{r_1,z}(a_1)\le \overline\cS_{r_2,z}(a_2).
    \end{align*}
\end{proposition}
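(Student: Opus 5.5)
We extend Proposition~\ref{prop:non-cros} from dyadic starting points to all of $\r^2$ by approximating general flow lines from above with dyadic ones, using Corollary~\ref{cor:ex} for $\cS$ and~\eqref{eq:cS_b} for $\cS^{\mathrm b}$. Throughout we work on the almost sure event where Proposition~\ref{prop:non-cros}, Lemma~\ref{prop:density} (for both $\cS$ and $\cS^{\mathrm b}$), Corollary~\ref{cor:ex}, \eqref{eq:cS_b}, and the identification $\cS^{\mathrm b}=\cS^*$ from the proof of Theorem~\ref{thm:backward_flow_SDE} all hold.

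The first step is a two-sided representation. For every $(a,r)\in\r^2$ and every $x\in\r$, we will show
\[
\overline\cS_{r,x}(a)=\inf\bigl\{\overline\cS_{r',x}(a'):\ (a',r')\in\bD^2,\ \overline\cS_{r',r}(a')>a\bigr\}.
\]
The inequality $\le$ holds by non-crossing, once we know that lines from dyadic points cannot cross a line from $(a,r)$. To get this, we apply Proposition~\ref{prop:non-cros} to the enlarged countable set $\bD^2\cup\{(a,r)\}$, exactly as in the proof of Lemma~\ref{prop:density2}. This requires the approximation argument of Proposition~\ref{prop:non-cros} to apply to any countable set of starting points, which it does since Proposition~\ref{thm:conv_appro_1} is pointwise in $(a,r)$.

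For the inequality $\ge$, we treat $x\ge r$ and $x\le r$ separately. When $x\ge r$, the dyadic points with $r'<r$ already achieve the infimum by Corollary~\ref{cor:ex}. When $x\le r$, we use~\eqref{eq:cS_b}, where dyadic $(b_n,r_n)$ with $r_n>r$ and $\cS^{\mathrm b}_{r_n,r}(b_n)>a$ achieve the infimum.

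The second step handles the general case. Let $\overline\cS_{r_1,y}(a_1)<\overline\cS_{r_2,y}(a_2)$.
\begin{itemize}
\item By the representation at $(a_1,r_1)$ evaluated at time $y$, choose a dyadic $(a',r')$ with $\overline\cS_{r',r_1}(a')>a_1$ and $\overline\cS_{r',y}(a')<\overline\cS_{r_2,y}(a_2)$.
\item For every dyadic $(a'',r'')$ with $\overline\cS_{r'',r_2}(a'')>a_2$, non-crossing between $(a'',r'')$ and $(a_2,r_2)$ on the enlarged countable set gives $\overline\cS_{r'',y}(a'')\ge\overline\cS_{r_2,y}(a_2)>\overline\cS_{r',y}(a')$.
\item Proposition~\ref{prop:non-cros} then yields $\overline\cS_{r'',z}(a'')\ge\overline\cS_{r',z}(a')$.
\item Since $\overline\cS_{r',r_1}(a')>a_1$, the first step at time $z$ gives $\overline\cS_{r',z}(a')\ge\overline\cS_{r_1,z}(a_1)$.
\item Taking the infimum over all such $(a'',r'')$ and using the representation at $(a_2,r_2)$ gives $\overline\cS_{r_2,z}(a_2)\ge\overline\cS_{r_1,z}(a_1)$.
\end{itemize}

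The main obstacle is the uniformity of the almost sure event. The enlarged-set trick in the first step gives an exceptional null set for each fixed $(a,r)$, but the statement is for all real points simultaneously. To avoid this, we prove the inequality $\le$ in the first step uniformly. For $x\ge r$, it is the property~\ref{item:R3} argument of Theorem~\ref{thm:ex}. For $x\le r$, it follows from the dual characterization~\eqref{eq:def_dual} and the monotonicity of $a\mapsto\cS_{x,r}(a)$. Finally, the mixed cases, in which $y$ and $z$ lie on different sides of $r_1$ or $r_2$, need care. In each mixed case we need the representation across the starting time, namely $\overline\cS_{r',r_1}(a')>a_1\Rightarrow\overline\cS_{r',z}(a')\ge\overline\cS_{r_1,z}(a_1)$ for all $z$. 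We will check this using~\eqref{eq:one_per_flow} and the duality~\eqref{eq:back_SDE_dual}.
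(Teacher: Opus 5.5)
Your argument is correct, but it is organized differently from the paper's proof.

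\textbf{The paper's route.} It argues by contradiction with $r_1<r_2$ and splits according to where $y,z$ sit relative to $[r_1,r_2]$.
\begin{itemize}
\item When $y,z\in[r_1,r_2]$, it approximates the forward line from $(a_1,r_1)$ by dyadic forward lines started before $r_1$. It approximates the backward line from $(a_2,r_2)$ by dyadic backward lines started after $r_2$. Both use Corollary~\ref{cor:ex}, so only \ref{item:R3} inside each flow is needed before invoking Proposition~\ref{prop:non-cros}.
\item The cases with $y$ or $z$ outside $[r_1,r_2]$ are reduced to that one by hand, via \ref{item:R3}, \eqref{eq:one_per_flow} and \eqref{eq:back_SDE_dual}.
\end{itemize}

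\textbf{Your route.} You first prove a two-sided analogue of Corollary~\ref{cor:ex}: $\overline\cS_{r,x}(a)$ is the infimum over dyadic lines, started on either side of $r$, that pass strictly above $a$ at time $r$. After that, the proposition is a verbatim copy of the \ref{item:R3} verification in Theorem~\ref{thm:ex}, with Proposition~\ref{prop:non-cros} replacing Proposition~\ref{cor:non_cross}. There is no case split on $y,z$ and no symmetry reduction.

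The ``$\ge$'' half of the representation is immediate from Corollary~\ref{cor:ex} and \eqref{eq:cS_b}, since your infimum runs over a larger set. The ``$\le$'' half compares a general line with a dyadic one at the general line's starting time, and it now carries all the mixed cases. Two corrections there:
\begin{itemize}
\item Drop the enlarged-countable-set remark. It only yields a null set depending on $(a,r)$.
\item Monotonicity of $a\mapsto\cS_{x,r}(a)$ is not enough. You need \ref{item:R3} across different starting times.
\end{itemize}

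With \ref{item:R3}, \eqref{eq:one_per_flow} and \eqref{eq:back_SDE_dual}, each subcase is a one-liner:
\begin{itemize}
\item $r'\le r$, $x\ge r$: this is \ref{item:R3} for $\cS$.
\item $r'\ge r$, $x\le r$: this is \ref{item:R3} for $\cS^{\mathrm b}$.
\item $r'<r$, $x\le r$: if $c>\cS_{r',x}(a')$ (when $x\ge r'$), or if $\cS_{x,r'}(c)>a'$ (when $x\le r'$), then \ref{item:R3} gives $\cS_{x,r}(c)\ge\cS_{r',r}(a')>a$. Then \eqref{eq:back_SDE_dual} yields $\cS^{\mathrm b}_{r,x}(a)\le\overline\cS_{r',x}(a')$.
\item $r'>r$, $x\ge r$: \eqref{eq:back_SDE_dual} turns $\cS^{\mathrm b}_{r',r}(a')>a$ into $\cS_{r,r'}(a)\le a'$. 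For $x\ge r'$, conclude with \ref{item:R3} or \eqref{eq:one_per_flow}. For $x\in[r,r']$, every $c<\cS_{r,x}(a)$ has $\cS_{x,r'}(c)\le\cS_{r,r'}(a)\le a'$, so \eqref{eq:back_SDE_dual} gives $\cS_{r,x}(a)\le\cS^{\mathrm b}_{r',x}(a')$.
\end{itemize}

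\textbf{What each route buys.} Yours gives a reusable structural statement and makes uniformity over all real starting points transparent, at the cost of mixing approximants from both sides. The paper's needs no intermediate lemma, because it keeps each approximating line on its ``natural'' side of the starting point.
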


\begin{proof}
    We argue by contradiction. Suppose there exist parameters such that $\overline\cS_{r_1,y}(a_1)<\overline\cS_{r_2,y}(a_2)$ but $\overline\cS_{r_1,z}(a_1)>\overline\cS_{r_2,z}(a_2)$. Without loss of generality, assume that $r_1 < r_2$. By property~\ref{item:R3} for both $\cS$ and $\cS^{\mathrm b}$, $y\vee z>r_1$ and $y\wedge z<r_2$. We distinguish three cases.

    \medskip
    \noindent
    \textbf{Case 1:} $y,z\in [r_1,r_2]$. By assumption, $\cS_{r_1,y}(a_1)<\cS^{\mathrm{b}}_{r_2,y}(a_2)$ and $\cS_{r_1,z}(a_1)>\cS^{\mathrm{b}}_{r_2,z}(a_2)$. By Corollary~\ref{cor:ex}, there exist $a_1',r_1',a_2',r_2' \in \bD$ with $r_1' < r_1 < r_2 < r_2'$, $\cS_{r_1',r_1}(a_1')>a_1$ and $\cS^{\mathrm{b}}_{r_2',r_2}(a_2')>a_2$ such that $\cS_{r_1',y}(a_1')\in [\cS_{r_1,y}(a_1), \cS^{\mathrm{b}}_{r_2,y}(a_2))$ and $\cS^{\mathrm{b}}_{r_2',z}(a_2')\in [\cS^{\mathrm{b}}_{r_2,z}(a_2), \cS_{r_1,z}(a_1))$. Another application of Corollary~\ref{cor:ex} gives
    $$
    \cS_{r_1',z}(a_1')\ge \cS_{r_1,z}(a_1)>\cS^{\mathrm{b}}_{r_2',z}(a_2'), \quad 
    \cS^{\mathrm{b}}_{r_2',y}(a_2')\ge \cS^{\mathrm{b}}_{r_2,y}(a_2)>\cS_{r_1',y}(a_1'),
    $$
    contradicting Proposition~\ref{prop:non-cros}.

    \medskip
    \noindent
    \textbf{Case 2:} $z>r_2$ or $y<r_1$. By symmetry, it suffices to consider $z>r_2$, which gives $\cS_{r_1,z}(a_1) > \cS_{r_2,z}(a_2)$. Property~\ref{item:R3} together with~\eqref{eq:one_per_flow} implies $\cS_{r_1,r_2}(a_1)>a_2$. As established in Case 1, for any $x\in [r_1,r_2]$, $\cS_{r_1,x}(a_1)\ge \cS^{\mathrm{b}}_{r_2,x}(a_2)$. In particular, $a_1\ge \cS^{\mathrm{b}}_{r_2,r_1}(a_2)$, and hence $\cS^{\mathrm{b}}_{r_1,x}(a_1)\ge \cS^{\mathrm{b}}_{r_2,x}(a_2)$ for any $x\le r_1$. This rules out the existence of any $y<r_2$ satisfying the initial assumption.

    \medskip
    \noindent
    \textbf{Case 3:} $z<r_1$ or $y>r_2$. By symmetry, it suffices to consider $y>r_2$, which gives $\cS_{r_1,y}(a_1)<\cS_{r_2,y}(a_2)$. Let $b\in (\cS_{r_1,y}(a_1), \cS_{r_2,y}(a_2))$. By property~\ref{item:R3}, for any $a'>a_2$ we have $\cS_{r_2,y}(a')\ge \cS_{r_2,y}(a_2)>b$, while for any $a'<\cS_{r_1,r_2}(a_1)$ we have $\cS_{r_2,y}(a')\le \cS_{r_1,y}(a_1)<b$. Hence, by~\eqref{eq:back_SDE_dual} and property~\ref{item:R2}, $\cS^{\mathrm{b}}_{y,r_1}(b)>a_1$ and $\cS^{\mathrm{b}}_{y,r_2}(b)\in [\cS_{r_1,r_2}(a_1),a_2]$. It follows from~\eqref{eq:one_per_flow} that $\cS^{\mathrm{b}}_{r_2,r_1}(a_2)\ge \cS^{\mathrm{b}}_{y,r_1}(b)>a_1$. Combining Case 1 with property~\ref{item:R3}, this rules out the existence of any $z<r_2$ satisfying the initial assumption. 
    All cases lead to a contradiction, which completes the proof.
\end{proof}


\section{Flows on strip-like domains and their properties}\label{sec:strip}

Let $I\subset\r$ be an interval, possibly unbounded, and open at its right endpoint, and let $\cW$ be a white noise on $I\times\r$. Let $\cW^*$ denote the image of $\cW$ under $(s,u)\mapsto(s,-u)$. Having studied stochastic flows on the whole plane, we now consider flows on the strip $I\times\r$. Throughout this section, we assume that $(\sigma,b)$ satisfy Assumption~\ref{assu:H} and conditions~\ref{item:1a}, \ref{item:1b}, \ref{item:2c}, and~\ref{item:2d} on $\r$.

Let $d_1:=\inf I$ and $d_2:=\sup I$. For $\eta=(\eta_1,\eta_2)\in\{0,1\}^2$, we say that a solution to~\eqref{eq:SDE_gener} starting from $(a,r)\in I\times\r$ satisfies the boundary conditions $\eta$ if, whenever $\eta_i=0$, it is absorbed upon hitting $d_i$, and the obtained process stays in $\bar I$ for all times. When $|d_i|=\infty$, the boundary condition $\eta_i$ is irrelevant.

\begin{definition}\label{def:strip_flow}
Let $\eta\in \{0,1\}^2$. A stochastic flow on $I\times \r$ driven by $\cW$ with boundary conditions $\eta$ is a family of processes 
$$
\cS := \left\{\bigl(\cS_{r,x}(a),\, x \ge r \bigr)\right\}_{(a,r) \in I \times \r}
$$
with the regularity properties \ref{item:R1}--\ref{item:R3} in Definition~\ref{def:flow} (restricted to $I\times \r$) such that for each fixed $(a,r)\in I\times \r$,  $\cS_{r,\cdot}(a)$ is the almost sure unique strong solution to~\eqref{eq:SDE_gener} and satisfies the boundary conditions $\eta$. 
\end{definition}

Let $\cI:=(\bD\cap I)\times\bD$ denote the set of dyadic pairs in $I\times\r$. For each $(a,r)\in\cI$, let $(\cS_{r,x}(a),\,x\ge r)$ denote the almost sure unique strong solution to~\eqref{eq:SDE_gener} satisfying the boundary condition $\eta$. For any $x\in\r$, define
\begin{align*}
    M_I(x):=\bigl\{\cS_{r,x}(a):\, r<x,\ (a,r)\in\cI\bigr\}.
\end{align*}
When $I=\r$, this coincides with the set $M(x)$ defined in~\eqref{eq:def_M}. The next result shows that Lemma~\ref{prop:density} and Corollary~\ref{cor:ex} remain valid in the strip setting.

\begin{lemma}\label{lm:consis}
Let $\eta\in\{0,1\}^2$ and suppose that $(\sigma,b)$ satisfy Assumption~\ref{assu:H} and conditions~\ref{item:1a}, \ref{item:1b}, \ref{item:2c}, and~\ref{item:2d} on $\r$. Then the following properties hold:
\vspace{-0.5em}
\begin{enumerate}[label=(\roman*)]
    \item Almost surely, for any $x\in \r$, the set $M_I(x)$ is dense in $I$.
    \vspace{-0.5em}
    \item Suppose further that the associated stochastic flow $\cS$ on $I\times \r$ with boundary conditions $\eta$ exists. Then, almost surely, for all $(a,r)\in I\times \r$ and $x\ge r$,
    \begin{align}\label{eq:cS_I}
        \cS_{r,x}(a) = \inf \left\{ \cS_{r_n,x}(a_n):\,r_n< r,\, \cS_{r_n,r}(a_n)>a,\, (a_n,r_n)\in \cI \right\}.
    \end{align}
\end{enumerate}
\end{lemma}

\begin{proof}
(i) Let $\cShat$ denote the stochastic flow on $\r^2$ associated with $(\sigma,b)$, whose existence is guaranteed by Theorem~\ref{thm:ex}. Fix dyadic numbers $a$ and $h$ such that $(a-h,a+h)\subset I$, and let $r\in\r$ and $x\ge r$. On the event $\big\{\sup_{y\in[r,x]}|\cShat_{r,y}(a)-a|< h\bigr\}$, the path $(\cShat_{r,y}(a),\, y\in [r,x])$ stays inside $I$. Hence, by the definition of $\cS$,
$$
\cS_{r,y}(a) = \widehat{\cS}_{r,y}(a), \qquad r\le y\le x.
$$
Therefore, 
$$
\p \bigg(\sup_{y\in[r,x]}|\cS_{r,y}(a)-a|\ge h\bigg)\le \p \bigg(\sup_{y\in[r,x]}|\cShat_{r,y}(a)-a|\ge h\bigg).
$$
The remaining estimate follows as in Lemma~\ref{prop:density}, which proves (i).

\smallskip
(ii) Since $M_I(x)$ is dense in $I$, the infimum in~\eqref{eq:cS_I} is well defined. By property~\ref{item:R3}, the right-hand side of~\eqref{eq:cS_I} is bounded below by $\cS_{r,x}(a)$. On the other hand, by property~\ref{item:R2},
$$
\lim_{a_n\downarrow a}\cS_{r,x}(a_n)=\cS_{r,x}(a)
\qquad \text{almost surely}.
$$
For each $a_n>a$, applying (i) we can find $(b_n,r_n)\in \cI$ with $r_n<r$ such that $\cS_{r_n,r}(b_n)\in (a,a_n)$. Letting $n\to\infty$ shows that the right-hand side of~\eqref{eq:cS_I} is bounded above by $\cS_{r,x}(a)$, completing the proof.
\end{proof}

By the uniqueness argument in the proof of Theorem~\ref{thm:ex}, together with Lemma~\ref{lm:consis}(i), we obtain the following uniqueness result for the strip-like  stochastic flow.

\begin{proposition}[Uniqueness of the flow]\label{prop:unique}
    Let $\eta\in\{0,1\}^2$ and suppose that $(\sigma,b)$ satisfy Assumption~\ref{assu:H} and conditions~\ref{item:1a}, \ref{item:1b}, \ref{item:2c}, and~\ref{item:2d} on $\r$. If the associated stochastic flow $\cS$ on $I\times\r$ with boundary conditions $\eta$ exists, then it is unique.
\end{proposition}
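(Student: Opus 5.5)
The plan is to mimic the uniqueness part of the proof of Theorem~\ref{thm:ex}, with Lemma~\ref{prop:density} replaced by Lemma~\ref{lm:consis}(i).

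Let $\cS$ and $\tcS$ be two stochastic flows on $I\times\r$ with boundary conditions $\eta$, both driven by the same $\cW$.

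\textbf{Step 1 (dyadic lines).} For each $(a,r)\in\cI$, both $\cS_{r,\cdot}(a)$ and $\tcS_{r,\cdot}(a)$ are the almost sure unique strong solution to~\eqref{eq:SDE_gener} with boundary conditions $\eta$. Since $\cI$ is countable, there is a single almost sure event on which
\[
\cS_{r,x}(a)=\tcS_{r,x}(a)\qquad\text{for all } (a,r)\in\cI,\ x\ge r .
\]

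\textbf{Step 2 (reconstruction formula for $\tcS$).} Next I show that $\tcS$ satisfies~\eqref{eq:cS_I}. I will not invoke Lemma~\ref{lm:consis}(ii) directly for $\tcS$; instead I repeat its argument, which uses only \ref{item:R2}, \ref{item:R3} and the density statement. By Lemma~\ref{lm:consis}(i), almost surely the set $M_I(x)$ built from the dyadic lines is dense in $I$ for every $x$. By Step~1, this set is the same for $\cS$ and $\tcS$.

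Fix $(a,r)\in I\times\r$ and $x\ge r$.

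\emph{Upper bound on $\tcS_{r,x}(a)$.} Take any $(a_n,r_n)\in\cI$ with $r_n<r$ and $\tcS_{r_n,r}(a_n)>a=\tcS_{r,r}(a)$. Property~\ref{item:R3} gives $\tcS_{r_n,x}(a_n)\ge\tcS_{r,x}(a)$. Hence $\tcS_{r,x}(a)$ is bounded above by the right-hand side of~\eqref{eq:cS_I}.

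\emph{Reverse bound.} Take $a_n\downarrow a$ with $a_n\in I$; this is possible because $I$ is open at its right endpoint and $a\in I$. By density, choose $(b_n,r_n)\in\cI$ with $r_n<r$ and $\tcS_{r_n,r}(b_n)\in(a,a_n)$. Then \ref{item:R1} and \ref{item:R3} give $\tcS_{r_n,x}(b_n)\le\tcS_{r,x}(a_n)$. Letting $n\to\infty$ and using \ref{item:R2} (right-continuity in $a$), the right-hand side of~\eqref{eq:cS_I} is at most $\tcS_{r,x}(a)$.

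The same two bounds hold for $\cS$.

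\textbf{Step 3 (conclusion).} The right-hand side of~\eqref{eq:cS_I} involves only the dyadic lines, which agree by Step~1. Therefore $\tcS=\cS$ simultaneously for all $(a,r)\in I\times\r$ and $x\ge r$, almost surely.

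\textbf{Main obstacle.} The delicate point is the boundary case in the reverse bound. When $\eta_i=0$ and lines are absorbed at $d_i$, an approximating dyadic line may reach the boundary, and we must still be able to land it strictly inside $(a,a_n)\subset I$ at time $r$. This is guaranteed because Lemma~\ref{lm:consis}(i) gives density in $I$ itself. It also matters that all almost sure statements are reduced to countably many events indexed by $\cI$, so that the conclusion holds simultaneously for all real starting points.
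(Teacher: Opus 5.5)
Your proposal is correct and follows essentially the same route as the paper, which reruns the uniqueness part of the proof of Theorem~\ref{thm:ex} with Lemma~\ref{lm:consis}(i) in place of Lemma~\ref{prop:density}: the dyadic lines agree almost surely, and properties \ref{item:R1}--\ref{item:R3} together with density force any such flow to satisfy the reconstruction formula~\eqref{eq:cS_I}, which depends only on the dyadic lines. Avoiding Lemma~\ref{lm:consis}(ii) is unnecessary, since that lemma applies to any existing flow and hence directly to $\tcS$, but repeating its argument, as you do, is equally valid.
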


For each trajectory of the stochastic flow $\cS$ starting from $(a,r)\in \cI$, the path $x\mapsto \cS_{r,x}(a)$ is almost surely continuous. The next proposition extends this to all starting points in $I\times\r$, thereby proving the continuity statement in Theorem~\ref{thm:ex_intro}.

\begin{proposition}[Continuity of the flow]\label{prop:conti}
    Under the assumptions of Proposition~\ref{prop:unique}, almost surely, the map $x\mapsto \cS_{r,x}(a)$ is continuous for every $(a,r)\in I\times\r$.
\end{proposition}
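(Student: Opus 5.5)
We plan to use the representation~\eqref{eq:cS_I} from Lemma~\ref{lm:consis}(ii), which writes every flow line as an infimum of dyadic flow lines. These dyadic lines are continuous, so the task is to upgrade pointwise regularity to continuity. Fix $(a,r)\in I\times\r$ and write $f(x):=\cS_{r,x}(a)$ for $x\ge r$.

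\textbf{Upper semicontinuity.} As an infimum of continuous functions of $x$, restricted to $x\ge r$, $f$ is upper semicontinuous on $[r,\infty)$. Here the relevant family is the dyadic lines $\cS_{r_n,\cdot}(a_n)$ with $r_n<r$ and $\cS_{r_n,r}(a_n)>a$, each continuous on $[r_n,\infty)\supset[r,\infty)$.

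\textbf{Right-continuity.} For $x_0\ge r$, set $b:=f(x_0)$. The flow property~\eqref{eq:one_per_flow} gives $f(x)\le \cS_{x_0,x}(b)$ for $x\ge x_0$. For a lower bound, pick dyadic lines passing just below $b$ at time $x_0$. By density (Lemma~\ref{lm:consis}(i)) there is $(a',r')\in\cI$ with $r'<x_0$ and $\cS_{r',x_0}(a')\in(b-\varepsilon,b)$. If $\cS_{r',x_0}(a')<f(x_0)$, property~\ref{item:R3} gives $\cS_{r',x}(a')\le f(x)$ for all $x\ge x_0$. This requires $r'\ge r$, or else the comparison at time $\max(r,r')$, which holds in either case by~\ref{item:R3} applied at $y=x_0$. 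Continuity of the dyadic line then yields $\liminf_{x\downarrow x_0} f(x)\ge b-\varepsilon$. For the upper bound, take dyadic $(a'',r'')$ with $r''<x_0$ and $\cS_{r'',x_0}(a'')\in(b,b+\varepsilon)$. Then~\ref{item:R3} gives $f(x)\le \cS_{r'',x}(a'')$ for $x\ge x_0$, so $\limsup_{x\downarrow x_0}f(x)\le b+\varepsilon$. Hence $f$ is right-continuous. At the boundary of $I$ (when $b=d_i$) only one side is needed: the other inequality is trivial since paths stay in $\bar I$, and absorption keeps the comparison valid.

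\textbf{Left-continuity, the main obstacle.} Fix $x_0>r$. Upper semicontinuity gives $\limsup_{x\uparrow x_0}f(x)\le f(x_0)$, so we must rule out a downward jump, $\liminf_{x\uparrow x_0} f(x)<f(x_0)$. The sandwich argument above does not apply directly, because the dyadic lines through nearby points at time $x$ start before $x$ but are compared only forward in time.

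We argue by contradiction. Suppose $\liminf_{x\uparrow x_0}f(x)<c<f(x_0)$ for some $c$. By density choose $(a',r')\in\cI$ with $r'<x_0$ and $\cS_{r',x_0}(a')\in(c,f(x_0))$. By continuity, $\cS_{r',x}(a')>c$ for all $x$ in a left neighborhood of $x_0$, and we may take $r'$ close to $x_0$. Along a sequence $x_k\uparrow x_0$ with $x_k>r'$ we have $f(x_k)<c<\cS_{r',x_k}(a')$. Property~\ref{item:R3} then forces $f(x_0)\le\cS_{r',x_0}(a')<f(x_0)$, a contradiction.

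The only care needed is that $x_k\ge\max(r,r')$, which we ensure by taking $k$ large. Boundary absorption does not affect this argument, since~\ref{item:R3} holds on $I\times\r$ by Definition~\ref{def:strip_flow}. All steps hold simultaneously for every $(a,r)$ on the single almost-sure event where the dyadic lines are continuous, $M_I(x)$ is dense, and~\ref{item:R3} holds, which gives continuity of every flow line almost surely.
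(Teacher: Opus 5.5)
Your proof is correct and follows essentially the same route as the paper: upper semicontinuity from the infimum representation \eqref{eq:cS_I}, then ruling out a jump by placing a continuous dyadic line (via the density of $M_I$) strictly between the limiting value and $\cS_{r,x}(a)$ and invoking \ref{item:R3}. The only difference is organisational. You treat right- and left-sided limits separately, with a sandwich argument on the right. The paper instead handles an arbitrary sequence $x_k\to x$ from either side in a single contradiction argument.
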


\begin{proof}
    By~\eqref{eq:cS_I}, $\cS_{r,\cdot}(a)$ has values in $\bar I$ and is upper semi-continuous as an infimum of continuous functions. To prove continuity, it suffices to rule out upward jumps. 
    Suppose that there exists $(a,r)\in I\times \r$ and a sequence $\{x_k\}$ converging to $x\ge r$ such that $\lim_{k\to \infty} \cS_{r,x_k}(a) < \cS_{r,x}(a) - \varepsilon$ for some $\varepsilon > 0$.
    Then, by Lemma~\ref{lm:consis}(i), one can find $(a', r') \in \cI$ with $r' < x$ such that $\lim_{k\to \infty} \cS_{r,x_k}(a) < \cS_{r',x}(a') < \cS_{r,x}(a)$. Since $x \mapsto \cS_{r',x}(a')$ is continuous, for sufficiently large $k$, we have $\cS_{r,x_k}(a) < \cS_{r',x_k}(a')$. This contradicts property~\ref{item:R3} and completes the proof.
\end{proof}

Let $\cS$ be a stochastic flow on $I\times\r$ with boundary conditions $\eta$. With the convention that $\inf\emptyset=\infty$, the associated backward flow is defined by
\begin{align}\label{eq:def_dual_+}
    \cS_{r, x}^*(b):=\inf \{a \in I:\, \cS_{x,r}(a)>b \}\wedge \sup I, \qquad b\in I,\, x\le r.
\end{align}
Then, for any $(a,r)\in I\times \r$ and $x\ge r$, $(\cS^*)^*_{r,x}(a)=\cS_{r,x}(a)$. Indeed, since $\cS_{x,r}^*(b)>a$ implies that $\cS_{r,x}(a)\le b$, we have
$$
(\cS^*)^*_{r,x}(a)=\inf \{b \in I:\, \cS_{x,r}^*(b)>a \}\wedge \sup I\ge \inf \{b \in I:\, \cS_{r,x}(a)\le b \}\wedge \sup I=\cS_{r,x}(a).
$$
To prove the reverse inequality, first note that if $\cS_{r,x}(a)=\sup I$, then $(\cS^*)^*_{r,x}(a)\le \cS_{r,x}(a)$ holds. If instead $\cS_{r,x}(a)<\sup I$, let $\delta>0$ be sufficiently small so that $b:=\cS_{r,x}(a)+\delta\in I$. By property~\ref{item:R2}, $\cS^*_{x,r}(b)=\inf \{c \in I:\, \cS_{r,x}(c)>\cS_{r,x}(a)+\delta \}\wedge \sup I>a$. Hence, we have $(\cS^*)^*_{r,x}(a)\le \cS_{r,x}(a)+\delta$. Letting $\delta\downarrow0$ yields $(\cS^*)^*_{r,x}(a)\le \cS_{r,x}(a)$. This demonstrates that $\cS$ and $\cS^*$ are mutually dual. That is,
$$
\cS_{r, x}(a)=\inf \{b \in I:\, \cS_{x,r}^*(b)>a \}\wedge \sup I, \qquad a\in I,\, x\ge r.
$$

Let $(\sigma,b)$ and $(\sigma^*,b^*)$ be two pairs of coefficients, and let $\eta=(\eta_1,\eta_2)\in\{0,1\}^2$ and $\eta^*:=(1-\eta_1,1-\eta_2)$. Let $\cS$ and $\cS^{\mathrm b}$ denote the stochastic flows on $I\times\r$ associated with $(\sigma,b)$ and $(\sigma^*,b^*)$, respectively, with boundary conditions $\eta$ and $\eta^*$, where $\cS$ is governed by~\eqref{eq:SDE_gener} and $\cS^{\mathrm b}$ by
\begin{align}\label{sde:back}
\cS^{\mathrm b}_{-r,-x}(a)=a+\int_r^x\!\!\int_{\r}
\sigma^*\bigl(\cS^{\mathrm b}_{-r,-u}(a),-u,s\bigr)\cW^*(\dd s,\dd u)+\int_r^x b^*\bigl(\cS^{\mathrm b}_{-r,-u}(a),-u\bigr)\dd u.
\end{align}
We say that $(\cS,\cS^{\mathrm b})$ satisfy the \emph{absorption condition} if the following holds almost surely. For any $r_1<r_2$ and $a_1,a_2\in I$, if one of the trajectories $\cS_{r_1,\cdot}(a_1)$ (when $\eta_i=0$) and $\cS^{\mathrm b}_{r_2,\cdot}(a_2)$ (when $\eta_i^*=0$) is absorbed at $d_i$ at some level $s\in[r_1,r_2]$, then the other trajectory does not hit $d_i$ at level $s$.

\begin{theorem}\label{thm:dual_strip}
Let both $(\sigma,b)$ and $(\sigma^*,b^*)$ satisfy Assumption~\ref{assu:H} and conditions~\ref{item:1a}, \ref{item:1b}, \ref{item:2c}, and~\ref{item:2d} on $\r$. Assume that for all $x\in I$ and $s,t\in\r$,
\begin{align}\label{eq:rela}
\sigma^*(x,t,s)=-\sigma(x,t,s),\qquad
b^*(x,t)=-b(x,t)+\tfrac12\partial_x\bar\sigma(x,t).
\end{align}
Assume that the flows $(\cS,\cS^{\mathrm b})$ defined above exist and satisfy the absorption condition.
Then, almost surely, $\cS^{\mathrm b}=\cS^*$ in the sense of~\eqref{eq:def_dual_+}. In particular, $\{(\cS^*_{-r,-x}(a),\,x\ge r)\}_{(a,r)\in I \times \r}$ forms a stochastic flow on $I\times\r$ with boundary conditions $\eta^*$, and $(\cS^*)^*=\cS$.
\end{theorem}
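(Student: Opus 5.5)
The plan follows the proof of Theorem~\ref{thm:backward_flow_SDE}. There, two ingredients suffice: the infimum representation of each flow over dyadic starting points, and a non-crossing property between forward lines and backward lines. Once these hold for the strip, the identity $\cS^{\mathrm b}_{r,x}(b)=\inf\{a\in I:\cS_{x,r}(a)>b\}\wedge\sup I$ follows by the same two-sided argument. The flow property of $\cS^*$ and the relation $(\cS^*)^*=\cS$ are then immediate: $\cS^{\mathrm b}$ is by assumption a flow with boundary conditions $\eta^*$, and mutual duality was already shown after~\eqref{eq:def_dual_+}.

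\textbf{Step 1 (infimum representation).} Lemma~\ref{lm:consis}(ii) gives~\eqref{eq:cS_I} for $\cS$. Applied to $\cS^{\mathrm b}$, viewed as a forward flow in reversed time driven by $\cW^*$, it gives the analogue of~\eqref{eq:cS_b} with dyadic pairs in $\cI$.

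\textbf{Step 2 (non-crossing for dyadic starting points).} The claim is that almost surely, for all $(a_1,r_1),(a_2,r_2)\in\cI$ with $r_1<r_2$ and all $y,z\in[r_1,r_2]$,
\[
\cS_{r_1,y}(a_1)<\cS^{\mathrm b}_{r_2,y}(a_2)\ \Longrightarrow\ \cS_{r_1,z}(a_1)\le\cS^{\mathrm b}_{r_2,z}(a_2),
\]
and symmetrically with the roles of the two lines exchanged. I would prove this by localization to the whole-plane result. Extend $(\sigma,b)$ from $I$ to $\r$; they already satisfy the hypotheses on $\r$. By~\eqref{eq:rela}, the pair $(\sigma^*,b^*)$ is exactly the backward coefficient of $(\sigma,b)$ in~\eqref{eq:back_flow}. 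The same relation holds for the extensions if we define $\sigma^*:=-\sigma$ and $b^*:=-b+\frac12\partial_x\bar\sigma$ on $\r$. This needs~\ref{item:1c}--\ref{item:1d} for the extension. If only (2.a),(2.b) are available, I would instead approximate by smooth $(\sigma_n,b_n)$ via Lemma~\ref{lm:C1}/\ref{lm:a1} and use Proposition~\ref{thm:conv_appro_1} directly on the pair (forward, backward). Proposition~\ref{prop:non-cros} then gives non-crossing for the whole-plane lines $\overline{\cS}$. Before either strip line hits $\partial I$, it coincides with the corresponding whole-plane line, by pathwise uniqueness up to the hitting time, as in Lemma~\ref{lm:consis}(i). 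So the strip lines satisfy non-crossing up to the first boundary hitting time of either of them.

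\textbf{Step 3 (after a boundary hit; the main obstacle).} Suppose the forward line hits $d_i$ at level $s\in[r_1,r_2]$. Since $\eta^*_i=1-\eta_i$, exactly one of the two lines is absorbed at $d_i$; the other has boundary behaviour $1$ and stays in $\bar I$. Consider first the case where the forward line is absorbed ($\eta_i=0$). It stays at $d_i$ for all later levels. By the absorption condition, the backward line does not touch $d_i$ at level $s$. Hence the strict order relation between the two lines that held just before $s$ is the one holding at $s$. That order is preserved on $[s,r_2]$, because one line is frozen at the endpoint of $\bar I$ and the other stays in $\bar I$. The case where the backward line is absorbed is symmetric in reversed time. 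Before the first hit, the order is controlled by Step 2.

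The delicate point is the case where the non-absorbed line touches $d_i$ at a level where the absorbed line is not there. I expect this to be harmless, since touching the extreme point cannot reverse the order with a line that is inside $I$. I would handle it by splitting $[r_1,r_2]$ into the excursion intervals away from the boundary and using the whole-plane comparison of Step 2 on each interval, after restarting at dyadic levels.

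\textbf{Step 4 (identification).} Repeat the two inequalities of the proof of Theorem~\ref{thm:backward_flow_SDE}. If $\cS_{x,r}(a)\le b$, then Steps 1--2 give $\cS^{\mathrm b}_{r,x}(b)\ge a$. If $\cS_{x,r}(a)>b$, then the density of $M_I$ for $\cS^{\mathrm b}$ together with non-crossing gives $a\ge\cS^{\mathrm b}_{r,x}(b)$. The truncation $\wedge\sup I$ accounts for the case where the set $\{a\in I:\cS_{x,r}(a)>b\}$ is empty, in which the backward line must be at $\sup I$. I would check this case separately using the boundary behaviour from Step 3.
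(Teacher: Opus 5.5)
Your outer architecture matches the paper: the infimum representation over $\cI$, non-crossing between forward and backward lines on $[r_1,r_2]$, and then the two inequalities from the proof of Theorem~\ref{thm:backward_flow_SDE}. However, the non-crossing step carries the whole theorem, and it has two genuine gaps.

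\textbf{First gap: the comparison coefficients.} To use the whole-plane result you need a pair on $\r$ satisfying \ref{item:1a}--\ref{item:1d}. Its forward coefficients must agree with $(\sigma,b)$, and its backward coefficients $(-\sigma,-b+\tfrac12\partial_x\bar\sigma)$ must agree with $(\sigma^*,b^*)$, wherever the lines are compared. The given $(\sigma,b)$ need not satisfy \ref{item:1c}--\ref{item:1d} on $\r$; indeed $\partial_x\bar\sigma$ need not exist off $I$. Your fallback does not repair this:
\begin{itemize}
\item Lemma~\ref{lm:C1} presupposes \ref{item:1c}--\ref{item:1d}.
\item Lemma~\ref{lm:a1} gives neither \eqref{eq:assume_sig_partial} nor an $n$-uniform linear bound on $\partial_x\bar\sigma_n$. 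So the backward approximants, with drift $-b_n+\tfrac12\partial_x\bar\sigma_n$, have no reason to converge to solutions of~\eqref{sde:back}.
\end{itemize}
The missing observation is that \eqref{eq:rela} forces $\partial_x\bar\sigma=2(b+b^*)$ on $I$. Hence \ref{item:1c}--\ref{item:1d} hold on $I$ as a consequence of \ref{item:1a} and \ref{item:2d} for $b$ and $b^*$. One then extends by reparametrisation, $\sigma_N(x,t,s):=\sigma(\chi_N(x),t,s)$ and $b_N(x,t):=b(\chi_N(x),t)$. Here $\chi_N:\r\to I$ is smooth and equal to the identity on $I_N=(d_1+\tfrac1N,d_2-\tfrac1N)$ (Lemma~\ref{lm:C2}). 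Agreement then holds only on $I_N$. So the comparison must be localised to level intervals on which \emph{both} lines stay in $I_N$, not merely ``before hitting $\partial I$''.

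\textbf{Second gap: the boundary analysis.} Here your ``delicate point'' is left open. ``Before either line hits $\partial I$'' is not a common interval. The forward line is interior on an initial segment $[r_1,\tau_1)$ and the backward line on a final segment $(\tau_2,r_2]$, and these may be disjoint. Moreover, the non-absorbed line may touch $d_i$ on a perfect set of levels (e.g.\ reflecting BESQ$^\delta$, $\delta\in(0,2)$). So comparing excursion by excursion does not rule out a sign change at levels accumulating at boundary touches.

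The paper sidesteps this as follows.
\begin{itemize}
\item If the difference has strict opposite signs at $y<z$, continuity produces a maximal interval $[x,x']\subset(y,z)$ on which the two lines coincide. The sign is $\varepsilon$ at levels arbitrarily close to $x$ from the left and $-\varepsilon$ at levels arbitrarily close to $x'$ from the right.
\item Suppose the common value touched $d_i$ on $[x,x']$. The line with boundary condition $0$ at $d_i$ (exactly one of them, since $\eta^*=1-\eta$) is then absorbed either inside $[x,x']$ or strictly outside it.
\item Inside $[x,x']$, the other line is also at $d_i$ at that level, which the absorption condition forbids.
\item Strictly outside, the absorption level is before $x$ for the forward line and after $x'$ for the backward line. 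The difference then keeps a weak sign on a level range containing levels of both signs, a contradiction.
\item Hence the common value lies in $(d_1,d_2)$. One picks dyadic $y_0<x\le x'<z_0$ with both lines in $I_N$ on $[y_0,z_0]$ and restarts them at their random values at levels $y_0$ and $z_0$. This contradicts Proposition~\ref{prop:non-crossing} for $(\sigma_N,b_N)$.
\end{itemize}
Since the restart values are random, this needs the all-starting-points statement of Proposition~\ref{prop:non-crossing}, not the dyadic Proposition~\ref{prop:non-cros} you cite.
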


\smallskip
\begin{remark}\label{rmk:sufficient_condi}
    We provide a sufficient condition for the absorption condition in Theorem~\ref{thm:dual_strip} to hold: suppose that $\sigma$ and $b$ are time-homogeneous (i.e., $\sigma(x,t,s)\equiv\sigma(x,s)$ and $b(x,t)\equiv b(x)$), and that for any $(a,r)\in I\times\r$ and $x>r>y$, $\cS^{\mathrm b}_{r,y}(a)$ (when $\eta_i=0$) and $\cS_{r,x}(a)$ (when $\eta_i^*=0$) almost surely have no atoms at $d_i$.

    Indeed, by symmetry, it suffices to show that, when $\eta_i=0$,
    $$
    \p\!\left(\tau\le r_2, \cS^{\mathrm b}_{r_2,\tau}(a_2)=d_i\right)=0,
    $$
    for fixed $r_1<r_2$ and $a_1,a_2\in I$, where $\tau:=\inf\{t\ge r_1:\, \cS_{r_1,t}(a_1)=d_i\}$. For $t\ge r_1$, let $\cF_t:=\sigma(\cW|_{I\times [r_1,t]})$ be the natural right-continuous filtration of the white noise. Since $\tau$ is an $\cF_t$-stopping time and $\{\tau\le r_2\}\in\cF_\tau$, conditioning on $\cF_\tau$ yields
    \begin{align*}
        \p\!\left(\tau\le r_2, \cS^{\mathrm b}_{r_2,\tau}(a_2)=d_i\right) = \e\!\left[\mathbbm{1}_{\{\tau\le r_2\}}\p\!\left(\cS^{\mathrm b}_{r_2,\tau}(a_2)=d_i \mid \cF_\tau \right)\right].
    \end{align*}
    By strong uniqueness for~\eqref{sde:back}, there exists a measurable map $\Psi$ such that $\cS^{\mathrm b}_{r_2,\tau}(a_2) = \Psi\big(\tau, \cW|_{I\times [\tau, r_2]}\big)$ almost surely. The no-atom assumption implies $g(t)=0$ for all $t\in[r_1,r_2)$, where $g(t):=\p(\cS^{\mathrm b}_{r_2,t}(a_2)=d_i)$. Since $\cW|_{I\times[\tau,r_2]}$ is independent of $\cF_\tau$, we obtain that $\p\!\left(\cS^{\mathrm b}_{r_2,\tau}(a_2)=d_i \mid \cF_\tau \right) = g(\tau)$ almost surely, and hence the absorption condition follows.
\end{remark}

\smallskip
\begin{proof}[Proof of Theorem~\ref{thm:dual_strip}]
Under the assumptions of the theorem, both Lemma~\ref{lm:consis} and Proposition~\ref{prop:conti} apply to $\cS$ and $\cS^{\mathrm{b}}$. We first establish the non-crossing property between $\cS$ and $\cS^{\mathrm{b}}$, and then verify~\eqref{eq:def_dual_+}, thereby completing the proof.

\medskip
\noindent
\textbf{Non-crossing.} 
Fix $(a_i, r_i) \in \cI$ for $i=1,2$, with $r_1 < r_2$. We show that the trajectories $\cS_{r_1, \cdot}(a_1)$ and $\cS_{r_2, \cdot}^{\mathrm{b}}(a_2)$ almost surely do not cross on $[r_1,r_2]$. 
Equivalently, there do not exist $y, z \in [r_1, r_2]$ with $y < z$ such that
\begin{align}\label{eq:if_cross}
    \left(\cS_{r_1, y}(a_1)-\cS_{r_2, y}^{\mathrm{b}}(a_2)\right)\left(\cS_{r_1, z}(a_1)-\cS_{r_2, z}^{\mathrm{b}}(a_2)\right)<0.
\end{align}
We claim that if~\eqref{eq:if_cross} holds for some $y<z$, then there exist dyadic numbers $y_0<z_0$ in $[y,z]$ such that both processes remain in $(d_1,d_2)$ on $[y_0,z_0]$, and~\eqref{eq:if_cross} still holds with $y,z$ replaced by $y_0,z_0$. We now prove this claim. Let $\varepsilon=\mathrm{sgn}(\cS_{r_1, y}(a_1)-\cS_{r_2, y}^{\mathrm{b}}(a_2))$ with $\mathrm{sgn}(x):=\mathbbm{1}_{\{x>0\}}-\mathbbm{1}_{\{x<0\}}$.
Define $x:=\sup\{y'<z\colon \mathrm{sgn}(\cS_{r_1,y'}(a_1)-\cS_{r_2, y'}^{\mathrm{b}}(a_2))=\varepsilon\}$ and $x':=\inf\{z'>x\colon \mathrm{sgn}(\cS_{r_1,z'}(a_1)-\cS_{r_2, z'}^{\mathrm{b}}(a_2))=-\varepsilon\}$. By continuity of $\cS_{r_1,\cdot}(a_1)$ and $\cS_{r_2,\cdot}^{\mathrm b}(a_2)$ (Proposition~\ref{prop:conti}), $y<x\le x'<z$ and $\cS_{r_1,u}(a_1)=\cS_{r_2, u}^{\mathrm{b}}(a_2)$ for all $u\in [x,x']$. We show that $\cS_{r_1,u}(a_1) \in (d_1,d_2)$ for all $u\in [x,x']$. Suppose this is not the case and that the process touches some boundary $d_i$. Suppose first that $\eta_i=0$. The absorption level $s$ of $\cS_{r_1,\cdot}(a_1)$ satisfies $s\le x'$, and we cannot have $\cS^{\mathrm{b}}_{r_2,s}(a_2)=d_i$ by our assumptions. Hence $s<x$ and $\cS_{r_1,u}(a_1)=d_i$ for all $u\ge s$. In particular, $\cS_{r_1, u}(a_1)-\cS_{r_2, u}^{\mathrm{b}}(a_2)$ does not change sign on $[s,z]$, which contradicts the definition of $x$ and~\eqref{eq:if_cross}. Suppose now that $\eta_i=1$, so that $\eta_i^*=0$. The absorption level $s'$ of $\cS^{\mathrm{b}}_{r_2,\cdot}(a_2)$ is strictly greater than $x'$, and hence $\cS_{r_1, u}(a_1)-\cS_{r_2, u}^{\mathrm{b}}(a_2)$ does not change sign on $[y,s']$, which contradicts the definition of $x'$ and~\eqref{eq:if_cross}. Therefore, $\cS_{r_1,u}(a_1) \in (d_1,d_2)$ for all $u\in [x,x']$. Finally, by continuity of the paths, we can choose $y_0, z_0 \in \bD$ such that $y_0 < x < x' < z_0$ and both are sufficiently close to $x$ and $x'$, respectively, so as to deduce the claim.

We proceed by contradiction. Suppose that~\eqref{eq:if_cross} holds for some $y < z$ in $[r_1,r_2]\cap \bD$, and that both $\cS_{r_1,\cdot}(a_1)$ and $\cS_{r_2,\cdot}^{\mathrm b}(a_2)$ stay in $(d_1+\frac{1}{N}, d_2-\frac{1}{N})$ on $[y,z]$ for some sufficiently large integer $N \ge 1$. Let $a_1':=\cS_{r_1,y}(a_1)$ and $a_2':=\cS_{r_2,z}^{\mathrm b}(a_2)$. By the pathwise uniqueness for~\eqref{eq:SDE_gener}, almost surely, for any $x\in [y,z]$, $\cS_{r_1,x}(a_1)=\cS_{y,x}(a_1')$ and $\cS_{r_2,x}^{\mathrm b}(a_2)=\cS_{z,x}^{\mathrm b}(a_2')$. 
By the assumptions, $(\sigma,b)$ satisfy Assumption~\ref{assu:H} and conditions~\ref{item:1a}--\ref{item:1d} on $I$, so Lemma~\ref{lm:C2} applies. We may choose $(\sigma_N,b_N)$ satisfying Assumption~\ref{assu:H} and conditions~\ref{item:1a}--\ref{item:1d} on $\r$ such that
\begin{gather*}
    \bigl(\sigma_N(x,t,s), b_N(x,t)\bigr) = \bigl(\sigma(x,t,s), b(x,t)\bigr), \quad \forall t,s\in \r, \, x\in \bigl(d_1+\tfrac{1}{N}, d_2-\tfrac{1}{N}\bigr).
\end{gather*}
Since $\cS_{y,\cdot}(a_1')$ and $\cS_{z,\cdot}^{\mathrm b}(a_2')$ remain in $(d_1+\frac{1}{N}, d_2-\frac{1}{N})$ on $[y,z]$, they coincide with the flow lines governed by~\eqref{eq:SDE_gener} and~\eqref{eq:back_flow}, respectively, with the coefficients $(\sigma_N, b_N)$ in place of $(\sigma, b)$. This implies that~\eqref{eq:if_cross} contradicts the non-crossing property stated in Proposition~\ref{prop:non-crossing}. Consequently, for $\overline{\cS}$ defined in~\eqref{def:overline_S}, almost surely the following holds: for any $(a_i,r_i)\in \cI$, $i=1,2$, and any $x\in \r$,
\begin{align}\label{non_cross_for_proof}
	\left[\overline{\cS}_{r_1,x}(a_1)< \overline{\cS}_{r_2,x}(a_2) \right] \,\Longrightarrow \,\left[\forall z\in \r,\ \overline{\cS}_{r_1,z}(a_1)\le \overline{\cS}_{r_2,z}(a_2) \right].
\end{align}

\medskip
\noindent
\textbf{Duality relation.} Following the proof of Theorem~\ref{thm:backward_flow_SDE}, it remains to prove that $\cS^{\mathrm{b}}$ coincides with $\cS^*$ defined in~\eqref{eq:def_dual_+}, that is, for any $(b,r)\in I\times\r$ and $x\le r$,
\begin{align}\label{eq:back_SDE_dual_1}
	\cS^{{\mathrm{b}}}_{r, x}(b)=\inf \{a\ge 0:\, \cS_{x,r}(a)>b \}.
\end{align}

Suppose $a\ge 0$ satisfies $\cS_{x,r}(a) \le b$. For any $(b_n,r_n)\in \cI$ with $r_n > r$ and $\cS^{\mathrm{b}}_{r_n,r}(b_n) > b $,~\eqref{eq:cS_I} gives $(a_n,x_n)\in \cI$ with $x_n<x$, $\cS_{x_n,x}(a_n)>a$ and $\cS_{x_n,r}(a_n)\in [\cS_{x,r}(a), \cS^{\mathrm{b}}_{r_n,r}(b_n))$. \eqref{non_cross_for_proof} then implies $\cS^{\mathrm{b}}_{r_n,x}(b_n) \ge \cS_{x_n,x}(a_n)>a$, and applying~\eqref{eq:cS_I} to $\cS^{\mathrm b}$ yields $\cS^{\mathrm{b}}_{r,x}(b) \ge a$.
	
Conversely, suppose $a \ge 0$ satisfies $\cS_{x,r}(a) > b$. By Lemma~\ref{lm:consis}(i) applied to $\cS^{\mathrm{b}}$, there exist $(b_n,r_n)\in \cI$ with $r_n > r$ such that $\cS^{\mathrm{b}}_{r_n,r}(b_n)\in (b,\cS_{x,r}(a))$. For any $(a_n,x_n)\in \cI$ with $x_n<x$ and $\cS_{x_n,x}(a_n)>a$,~\eqref{eq:cS_I} gives $\cS_{x_n,r}(a_n)\ge \cS_{x,r}(a)>\cS^{\mathrm{b}}_{r_n,r}(b_n)$. Applying~\eqref{non_cross_for_proof}, we obtain $\cS_{x_n,x}(a_n)\ge \cS^{\mathrm{b}}_{r_n,x}(b_n)$, and taking the infimum yields $a\ge \cS^{\mathrm{b}}_{r,x}(b)$. This completes the proof of~\eqref{eq:back_SDE_dual_1}. 
\end{proof}


\section{Applications}\label{sec:apply}

In this section, we illustrate applications of the results in the foregoing sections through three stochastic flow models: the BESQ and Jacobi flows, and a newly introduced PSR flow. It is worth noting that the PSR flow does not fit exactly into the framework of Definition~\ref{def:strip_flow}. Instead, its structure is more closely related to the infinite family of independent coalescing reflected/absorbed Brownian motions (FICRAB) introduced by T\'oth and Werner~\cite{Toth98}, which is crucial in constructing scaling limits of rescaled true self-avoiding walks; see~\cite{Elena25} for recent developments.

\subsection{BESQ flow and Jacobi flow}

Let $I=\r_+$, $b(x,t)=\delta$, and $\sigma(x,t,s)=2\cdot\mathbbm{1}_{\{0\le s\le x\}}$ in~\eqref{eq:SDE_gener}. The associated general BESQ$^\delta$ flow $\cS$ is defined as the stochastic flow on $\h$ in the sense of Definition~\ref{def:strip_flow}, with boundary condition $\eta_1=0$ if $\delta\le0$, $\eta_1=1$ if $\delta\ge2$, and $\eta_1\in\{0,1\}$ if $\delta\in(0,2)$. This agrees with~\cite[Definition~2.2]{elie23}, where condition~\ref{item:R3} is equivalent to property~(iii) of~\cite[Definition~2.2]{elie23} by the perfect flow property established in~\cite[Appendix~A]{elie23}. Its dual flow $\cS^*$ is given, for $b\ge 0$ and $x\le r$, by
\begin{align*}
    \cS_{r, x}^*(b):=\inf \{a \ge 0:\, \cS_{x,r}(a)>b \}.
\end{align*}

Let $I=(0,1)$, $b(x,t)=\delta(1-x)-\delta'x$, and $\sigma(x,t,s)=2\cdot\mathbbm{1}_{\{0\le s\le 1\}}(\mathbbm{1}_{\{s\le x\}}-x)$ in~\eqref{eq:SDE_gener}. The associated general Jacobi$(\delta,\delta')$ flow $\cY$ is defined as the stochastic flow on $I\times\r$ in the sense of Definition~\ref{def:strip_flow}, with boundary conditions
\begin{itemize}[nosep]
    \item $\eta_1=0$ if $\delta\le0$, $\eta_1=1$ if $\delta\ge2$, and $\eta_1\in\{0,1\}$ if $\delta\in(0,2)$;
    \item $\eta_2=0$ if $\delta'\le0$, $\eta_2=1$ if $\delta'\ge2$, and $\eta_2\in\{0,1\}$ if $\delta'\in(0,2)$.
\end{itemize}
When $\delta,\delta'\in(0,2)$, there are four possible choices of boundary conditions. Its dual flow $\cY^*$ is given, for $b\in(0,1)$ and $x\le r$, by
\begin{align*}
    \cY_{r, x}^*(b):=\inf \{a \in (0,1):\, \cY_{x,r}(a)>b \}\wedge 1.
\end{align*}

The existence of the general BESQ flow and the general Jacobi flow follows from~\cite{elie23}. For the general BESQ flow, the coefficients satisfying~\eqref{eq:rela} are given by $b^*(x,t)=2-\delta$ and $\sigma^*(x,t,s)=-2\cdot\mathbbm{1}_{\{0\le s\le x\}}$. Since squared Bessel processes with positive dimension admit no atoms at deterministic times, the absorption condition follows immediately from Remark~\ref{rmk:sufficient_condi}. For the general Jacobi flow, the absorption condition follows from the transformation relating Jacobi and BESQ flows established in~\cite[Theorem~5.9]{elie23}. Applying Theorem~\ref{thm:dual_strip} therefore yields an explicit description of the dual flows in both cases. For brevity, we say that the flow is killed at $d_i$ if $\eta_i=0$, and non-killed at $d_i$ if $\eta_i=1$.

\begin{theorem}\label{thm:back_BESQ}
Let $\cW^*$ be the image of $\cW$ under $(a,r)\mapsto(a,-r)$.
\vspace{-0.5em}
\begin{enumerate}[label=(\roman*)]
    \item For each $(a,r)\in \h$, $(\cS^*_{-r,-x}(a),\, x\ge r)$ is the almost sure unique strong solution to
    \begin{align*}
        \cS^*_{-r,-x}(a)=a-2\int_r^x \cW^*([0,\cS^*_{-r,-u}(a)], \dd u) + (2-\delta)(x-r), \quad x\ge r.
    \end{align*}
    Let $\mathscr{S}^*_{r,x}(a):=\cS^*_{-r,-x}(a)$ for $x\ge r$. Then ${\mathscr S}^*$ is a general ${\rm BESQ}^{2-\delta}$ flow driven by $-\cW^*$. Moreover, ${\mathscr S}^*$ is killed if $\cS$ is not killed, and it is not killed if $\cS$ is killed. 

    \vspace{-0.5em}
    \item For each $(a,r)\in (0,1)\times \r$, $(\cY^*_{-r,-x}(a),\, x\ge r)$ is the almost sure unique strong solution to
    \begin{align*}
    \cY^*_{-r,-x}(a)= a 
    &- 2 \int_r^x\!\! \int_0^1 
    \bigl(\mathbbm{1}_{\{s \le \cY^*_{-r,-u}(a)\}} 
          - \cY^*_{-r,-u}(a)\bigr)
    \cW^*(\dd s, \dd u) \nonumber\\
    &+ \int_r^x 
    \Bigl[(2-\delta)\bigl(1-\cY^*_{-r,-u}(a)\bigr)
          - (2-\delta')\,\cY^*_{-r,-u}(a)\Bigr]
    \dd u, \quad x\ge r.
    \end{align*}
    Let $\mathscr{Y}^*_{r,x}(a):=\cY^*_{-r,-x}(a)$ for $x\ge r$. Then ${\mathscr Y}^*$ is a general ${\rm Jacobi}(2-\delta,2-\delta')$ flow driven by $-\cW^*$. Moreover, $\cY^*$ is killed at a boundary $0$ or $1$ if and only if $\cY$ is not. 
\end{enumerate}
\end{theorem}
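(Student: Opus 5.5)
The plan is to apply Theorem~\ref{thm:dual_strip} in each case. This requires computing the dual coefficients from~\eqref{eq:rela}, checking the standing hypotheses, and fixing the boundary conditions $\eta^*=(1-\eta_1,1-\eta_2)$ of the dual flow.

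For (i), $\sigma(x,t,s)=2\mathbbm{1}_{\{0\le s\le x\}}$. This gives $\bar\sigma(x,t)=4x$ on $I=\r_+$, so $\partial_x\bar\sigma=4$. Then~\eqref{eq:rela} yields
\[
\sigma^*=-2\mathbbm{1}_{\{0\le s\le x\}},\qquad b^*=-\delta+2=2-\delta.
\]
Substituting into~\eqref{sde:back} gives exactly the displayed SDE, since $\int_\r\sigma^*(y,-u,s)\cW^*(\dd s,\dd u)=-2\cW^*([0,y],\dd u)$. The coefficients need to be extended to $x<0$, for instance by $\sigma(x,\cdot)=0$ there. Then $\int|\sigma(x)-\sigma(y)|^2\dd s=4|x-y|$, so~\ref{item:1b} holds with $\rho_m(z)=2\sqrt z$. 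Conditions~\ref{item:1a}, \ref{item:2c}, \ref{item:2d} and Assumption~\ref{assu:H} are immediate, because the coefficients are constant in time and $\sigma\ge0$.

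The flow $\cS^{\mathrm b}$ on $\h$ with boundary condition $\eta_1^*=1-\eta_1$ is then a general BESQ$^{2-\delta}$ flow driven by $-\cW^*$, and it exists by~\cite{elie23}. We must check that $\eta^*$ is admissible for dimension $2-\delta$. If $\delta\le0$, then $\eta_1=0$ and $2-\delta\ge2$ forces $\eta_1^*=1$, which matches. If $\delta\ge2$, then $\eta_1=1$ and $2-\delta\le0$ forces $\eta_1^*=0$, which also matches. If $\delta\in(0,2)$, both choices are allowed. The absorption condition follows from Remark~\ref{rmk:sufficient_condi}, as noted before the statement. Theorem~\ref{thm:dual_strip} therefore gives $\cS^{\mathrm b}=\cS^*$. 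The strong uniqueness of the SDE follows from the pathwise uniqueness result in the appendix under these conditions.

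For (ii), I compute
\[
\bar\sigma(x)=4\int_0^1(\mathbbm{1}_{\{s\le x\}}-x)^2\dd s=4x(1-x),\qquad \partial_x\bar\sigma=4-8x.
\]
Then
\[
b^*=-\delta(1-x)+\delta'x+2-4x=(2-\delta)(1-x)-(2-\delta')x,
\]
and $\sigma^*=-\sigma$. This reproduces the displayed equation and identifies $\mathscr Y^*$ as a Jacobi$(2-\delta,2-\delta')$ flow driven by $-\cW^*$. The regularity checks on $(0,1)$, extended suitably outside it, are similar to (i). The H\"older-$1/2$ bound now holds because $\int_0^1(\mathbbm{1}_{\{s\le x\}}-\mathbbm{1}_{\{s\le y\}}-(x-y))^2\dd s\le 2|x-y|+2|x-y|^2$.

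One point needs care: $\sigma$ does not have constant sign, because of the $-x$ term. I would handle this by noting that the proofs only use the sign condition through the approximation lemmas. Alternatively, one could replace $\sigma$ by $2\mathbbm{1}_{\{s\le x\}}$ together with a one-dimensional Brownian term, which gives the same law. This is the step I expect to be the main obstacle, and I would resolve it by invoking the Jacobi–BESQ transformation of~\cite[Theorem~5.9]{elie23}, as the paper suggests.

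The boundary conditions are matched at each endpoint exactly as in (i). The absorption condition comes from that same transformation. Theorem~\ref{thm:dual_strip} then gives $\cY^{\mathrm b}=\cY^*$ together with the killed/non-killed swap.
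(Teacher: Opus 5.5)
Your proposal is correct and is essentially the paper's argument: compute $(\sigma^*,b^*)$ from \eqref{eq:rela}, check that the flipped boundary conditions are admissible for the new dimensions, take the absorption condition from Remark~\ref{rmk:sufficient_condi} (BESQ) and from \cite[Theorem~5.9]{elie23} (Jacobi), and apply Theorem~\ref{thm:dual_strip}. You rightly flag that the Jacobi coefficient violates the constant-sign hypothesis, which the paper passes over by applying Theorem~\ref{thm:dual_strip} without comment; of your fixes, the first is the one that works (outside the approximation lemmas, the sign is only used in the submartingale/Doob--Meyer step of Lemma~\ref{lm:claim}, which can be replaced by the fact that a continuous finite-variation local martingale vanishes), whereas the ``same law'' alternative does not, both because $\cY^{\mathrm b}=\cY^*$ is a pathwise statement for the given noise and because an added independent Brownian term can only increase the covariance $4\,x\wedge y$, so it cannot produce $4(x\wedge y-xy)$.
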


\subsection{PSR flow and self-duality}

Let $\cW$ be a standard space-time white noise on $\h$ with respect to the natural right-continuous filtration $\cF=(\cF_x,\ x\in\r)$. The following theorem establishes the existence and uniqueness of the polynomially self-repelling (PSR) flow with index $\alpha>0$.

\begin{theorem}\label{thm:forward}
    Fix $\alpha>0$. There exists a family of processes
    $$
    \cS = \left\{\bigl(\cS_{r,x}(a),\, x \ge r\bigr)\right\}_{(a,r) \in \h}
    $$
    such that, almost surely, the regularity properties~\ref{item:R1}--\ref{item:R3} in Definition~\ref{def:flow} (restricted to $\h$) hold, and for each $(a,r)\in \h$, $(\cS_{r,x}(a),\, x\ge r)$ is the unique strong solution to
    \begin{align}\label{eq:SDE_PSR}
        \cS_{r,x}(a) = a + \int_r^x \frac{2}{\cS_{r,u}(a)^{\alpha}}  \int_{\r} s^\alpha \mathbbm{1}_{\{0<s\le \cS_{r,u}(a)\}} \mathcal{W}(\dd s, \dd u) + \frac{x-r}{2\alpha+1},
    \end{align}
    which is absorbed at $\tau_{(a,r)}:=\inf \{x\ge r\vee 0:\, \cS_{r,x}(a)=0\}$. Moreover, this family is unique and continuous, and is referred to as the \textbf{polynomially self-repelling flow with index $\alpha$}.
\end{theorem}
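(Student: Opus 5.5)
Throughout, $\sigma(x,s):=2x^{-\alpha}s^\alpha\mathbbm{1}_{\{0<s\le x\}}$ for $x>0$ and $b\equiv 1/(2\alpha+1)$. The plan is to reduce to the machinery of Sections~\ref{sec:exist}--\ref{sec:strip} with $I=\r_+$ and boundary condition $\eta_1=0$ (absorption at $0$), after extending the coefficients to all of $\r$. I would set $\sigma(x,s):=0$ for $x\le 0$, keep $b\equiv 1/(2\alpha+1)$, and extend $\cW$ to a white noise on $\r^2$ by an independent noise on $(-\infty,0)\times\r$; this extension is never seen, since $\sigma$ vanishes for $s\le 0$.

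\textbf{Step 1: coefficient conditions.} I would verify Assumption~\ref{assu:H} and conditions~\ref{item:1a}--\ref{item:1d}. The coefficients are time-homogeneous, $\sigma\ge0$, and
$$\bar\sigma(x)=\frac{4x}{2\alpha+1}\,\mathbbm{1}_{\{x>0\}}.$$
For \ref{item:1b}, take $0\le x<y$. Then
$$\int_\r(\sigma(x,s)-\sigma(y,s))^2\dd s\le \frac{4x}{2\alpha+1}\bigl(1-(x/y)^\alpha\bigr)^2+4(y-x)\le C|y-x|,$$
using $1-(x/y)^\alpha\le C(y-x)/y$. Hence $\rho_m(z)=\sqrt{Cz}$ works, and $\int_{0+}\rho_m^{-2}=\infty$. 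The remaining conditions are \ref{item:1a} and \ref{item:2c}; the latter follows from $\bar\sigma\le C(1+|x|)$. Only \ref{item:1c} and \ref{item:1d} fail, and only at the kink $x=0$ of $\bar\sigma$, so I will work with \ref{item:1a}, \ref{item:1b}, \ref{item:2c}, \ref{item:2d}, which is all Theorem~\ref{thm:ex} needs.

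This gives a whole-plane flow $\cShat$, whose single lines are, after scaling by $2\alpha+1$, BESQ$^1$ processes. In particular $\cShat\ge0$ on lines started in $\h$.

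\textbf{Step 2: construct the killed flow.} For $(a,r)\in\cI$ I would define
$$\cS_{r,x}(a):=\cShat_{r,x\wedge\tau_{(a,r)}}(a).$$
Pathwise uniqueness for~\eqref{eq:SDE_PSR} up to absorption follows from that of $\cShat$, and it makes this the unique strong solution absorbed at $\tau_{(a,r)}$. For general $(a,r)\in\h$, I would define $\cS$ by the infimum formula~\eqref{eq:cS_I}, exactly as in the proof of Theorem~\ref{thm:ex}.

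Properties \ref{item:R1} and \ref{item:R2} then follow from Lemma~\ref{lm:consis}(i) and the monotone right-continuous form of~\eqref{eq:cS_I}. For \ref{item:R3} on dyadic lines, I would combine Proposition~\ref{cor:non_cross} for $\cShat$ with a simple observation: if the upper line is absorbed at level $t$ while the lower one is not yet absorbed, then the lower line satisfies $\cShat\le 0$ there, hence equals $0$ and is absorbed too. Absorption therefore preserves order. The extension of \ref{item:R3} to all starting points repeats the argument in Theorem~\ref{thm:ex}. Uniqueness is then Proposition~\ref{prop:unique}, and continuity is Proposition~\ref{prop:conti}.

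\textbf{Step 3: the main obstacle.} The hard part is showing that the line defined by~\eqref{eq:cS_I} at a non-dyadic $(a,r)$ is the strong solution of~\eqref{eq:SDE_PSR} absorbed at $\tau_{(a,r)}$. This needs an absorbed analogue of Proposition~\ref{thm:conv_appro_2}: as $a_n\downarrow a$, not only $\cShat_{r,\cdot}(a_n)\to\cShat_{r,\cdot}(a)$ uniformly, but also $\tau_{(a_n,r)}\downarrow\tau_{(a,r)}$.

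Monotonicity gives $\tau_{(a_n,r)}\ge\tau_{(a,r)}$. For the reverse bound I would apply the strong Markov property of the white noise at $\tau=\tau_{(a,r)}$. Almost surely $\cShat_{r,\cdot}(a)$ has zeros accumulating at $\tau$ from the right, because the zero set of BESQ$^1$ has no isolated points. I would then use the uniform convergence together with a comparison to show that for $a_n$ close to $a$ the line $\cShat_{r,\cdot}(a_n)$ must hit $0$ before $\tau+\varepsilon$. Concretely, I would sandwich it below a dyadic line started at level $\tau$ near height $0$, which hits $0$ within time $\varepsilon$ with high probability, and then send the starting height to $0$ along dyadics.

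If this sandwich argument proves too delicate, the fallback is to apply Theorem~\ref{thm:dual_strip}-type reasoning directly to the already constructed absorbed dyadic lines, and to obtain the SDE for general lines from pathwise uniqueness before absorption plus the above convergence of hitting times.
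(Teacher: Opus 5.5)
Your plan is correct and follows the paper's own route. It builds absorbed dyadic lines from $\cShat$, gets non-crossing from \ref{item:R3} of $\cShat$ plus the observation that absorption preserves order, uses the infimum formula with density for general starting points, and handles the key step (the paper's Lemma~\ref{lm:consi}(ii)), $\tau_{(a_n,r)}\downarrow\tau_{(a,r)}$, via the strong Markov property at $\tau$ and the fact that a scaled BESQ$^1$ started near $0$ hits $0$ within any fixed time with probability close to one.

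The one step to tighten is the sandwich, because a line started at the random level $\tau$ is not one of your dyadic lines. Either start the comparison line at the dyadic stopping time $\lceil 2^k\tau\rceil 2^{-k}$, or, as the paper does, apply the strong Markov property to $\cS_{r,\cdot}(a_n)$ itself and use that $b\mapsto\p(\cS_{0,\delta}(b)>\varepsilon)$ is nondecreasing by non-crossing.
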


\begin{remark}
    Note that~\eqref{eq:SDE_PSR} is an instance of~\eqref{eq:SDE_gener} with
    \begin{align}\label{eq:b_and_sig}
        b(x,t)\equiv b(x)=\tfrac{1}{2\alpha+1}, \qquad 
        \sigma(x,t,s)\equiv \sigma(x,s)=\tfrac{2s^\alpha}{x^\alpha}\mathbbm{1}_{\{0< s\le x\}}.
    \end{align}
    Then $(\sigma,b)$ satisfy Assumption~\ref{assu:H} and conditions~\ref{item:1a}, \ref{item:1b}, \ref{item:2c}, and~\ref{item:2d} on $\r$, so~\eqref{eq:SDE_PSR} admits a unique strong solution with pathwise uniqueness. 
\end{remark}

\begin{remark}\label{rmk:direct_proof}
    While in this paper we adopt a method parallel to that of Section~\ref{subsec:exist} to keep the presentation self-contained, the existence of the PSR flow can also be obtained in a rather direct way.
    Let $\cShat$ denote the stochastic flow on $\r^2$ associated with $(\sigma,b)$, whose existence is guaranteed by Theorem~\ref{thm:ex}. For each $(a,r)\in \h$, define $\cS_{r,\cdot}(a)$ to be the path $\cShat_{r,\cdot}(a)$ absorbed at $\hat\tau_{(a,r)}:=\inf \{x\ge r\vee 0:\, \cShat_{r,x}(a)=0\}$. Then, $\cS_{r,\cdot}(a)$ is the almost sure unique strong solution to~\eqref{eq:SDE_PSR}, absorbed at $\tau_{(a,r)}$. Moreover, properties~\ref{item:R1} and~\ref{item:R3} are immediate from the definition of $\cS$. By property~\ref{item:R2} of $\cShat$ and the monotonicity of $a\mapsto \cS_{r,x}(a)$, it remains to show that $a\mapsto \cS_{r,x}(a)$ is right-continuous for any $x\ge \tau_{(a,r)}$. To this end, we rely on the following coming-down-from-infinity property, to be established in a forthcoming work: almost surely, for any $x<y$, the set
    $$
    M(x,y):=\{\cShat_{r,y}(a):\, (a,r)\in \cD,\, r<x\}
    $$
    is locally finite. Thus, Corollary~\ref{cor:ex} implies that $\{\cShat_{r,\tau_{(a,r)}}(b):\,b\in\r_+\}$ is locally finite. Since $\cS_{r,\tau_{(a,r)}}(a_n)\downarrow \cS_{r,\tau_{(a,r)}}(a)=0$ as $a_n\downarrow a$, and only finitely many values of $\cS_{r,\tau_{(a,r)}}(a_n)$ lie in $[0,1)$, there exists $N$ such that $\cS_{r,\tau_{(a,r)}}(a_n)=0$ for all $n\ge N$. Hence, $\cS_{r,\cdot}(a_n)=0$ is absorbed at $\tau_{(a,r)}$ for all $n\ge N$. This outlines a direct pathway to prove Theorem~\ref{thm:forward}.
\end{remark}

For each $(a,r)\in \cD:=\h\cap \bD^2$, let $\cS_{r,\cdot}(a)$ denote the unique strong solution to~\eqref{eq:SDE_PSR}, absorbed at $\tau_{(a,r)}$. Each $\cS_{r,\cdot}(a)$ is almost surely continuous. Before proving Theorem~\ref{thm:forward}, we first study properties of the countable family $\{(\cS_{r,x}(a),\,x\ge r)\}_{(a,r)\in \cD}$.

\begin{lemma}\label{lm:non_cross}
    Almost surely, for any $(a_1,r_1)$, $(a_2,r_2) \in \cD$ and any $z \ge y \ge r_1 \vee r_2$,
    $$
    \cS_{r_1,y}(a_1) < \cS_{r_2,y}(a_2) 
    \ \Longrightarrow\ 
    \cS_{r_1,z}(a_1) \le \cS_{r_2,z}(a_2).
    $$
\end{lemma}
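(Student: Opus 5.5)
We will reduce the statement to the whole-plane non-crossing result, Proposition~\ref{cor:non_cross}, applied to the unabsorbed flow $\cShat$ on $\r^2$ associated with $(\sigma,b)$ from~\eqref{eq:b_and_sig}; its existence is guaranteed by Theorem~\ref{thm:ex}, since the preceding remark records that $(\sigma,b)$ satisfy Assumption~\ref{assu:H} and conditions~\ref{item:1a}, \ref{item:1b}, \ref{item:2c}, \ref{item:2d} on $\r$. The coefficient $\sigma$ in~\eqref{eq:b_and_sig} is only defined for $x>0$. I would extend it to $x\le0$ by $\sigma(x,s):=0$, which is consistent with $\bar\sigma(x)=\tfrac{4x}{2\alpha+1}$ for $x>0$. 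If this extension is not the one implicit in the remark, I would instead use a Lipschitz extension that agrees on $(0,\infty)$; all that matters is that the resulting equation agrees with~\eqref{eq:SDE_PSR} on $(0,\infty)$.

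For $(a,r)\in\cD$, the path $\cS_{r,\cdot}(a)$ agrees with $\cShat_{r,\cdot}(a)$ up to $\tau_{(a,r)}$ and is $0$ afterwards. This follows from pathwise uniqueness: both processes solve the same equation on $[r,\tau_{(a,r)}]$, where they stay in $[0,\infty)$ and the coefficients coincide. There is a small subtlety when $r<0$, where $\tau$ is taken over $x\ge r\vee0$ and the line lives on the white noise restricted to $\h$. I would handle it by noting that the noise on $\{s\le0\}$ never enters, since $\sigma$ vanishes for $s\le 0$. So almost surely, simultaneously for all $(a,r)\in\cD$, we have $\cS_{r,x}(a)=\cShat_{r,x}(a)$ for $x\le\tau_{(a,r)}$ and $\cS_{r,x}(a)=0$ for $x\ge\tau_{(a,r)}$.

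Now fix $(a_i,r_i)\in\cD$ and $z\ge y\ge r_1\vee r_2$ with $\cS_{r_1,y}(a_1)<\cS_{r_2,y}(a_2)$. Then $\cS_{r_2,y}(a_2)>0$, so $y<\tau_{(a_2,r_2)}$ and $\cS_{r_2,y}(a_2)=\cShat_{r_2,y}(a_2)$. We split into two cases.
\begin{enumerate}[label=(\alph*)]
\item If $\tau_{(a_1,r_1)}\le y$, then $\cS_{r_1,\cdot}(a_1)\equiv0$ on $[y,\infty)$. Since $\cS_{r_2,\cdot}(a_2)\ge0$, the conclusion is trivial.
\item If $y<\tau_{(a_1,r_1)}$, then $\cShat_{r_1,y}(a_1)<\cShat_{r_2,y}(a_2)$. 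Proposition~\ref{cor:non_cross} gives $\cShat_{r_1,u}(a_1)\le\cShat_{r_2,u}(a_2)$ for all $u\ge y$. In particular, $\tau_{(a_2,r_2)}\ge \tau_{(a_1,r_1)}$: at $u=\tau_{(a_2,r_2)}$ we have $\cShat_{r_1,u}(a_1)\le 0$, so the first line has already hit zero, using continuity and a start above $0$. For $z<\tau_{(a_1,r_1)}$ the inequality transfers directly. For $z\ge\tau_{(a_1,r_1)}$, the left side is $0$ and the right side is $\ge0$.
\end{enumerate}

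The main obstacle is the identification step, namely that the absorbed solution coincides with the whole-plane flow line up to absorption and that the extension to $x\le0$ is admissible. If the extension of $\sigma$ fails condition~\ref{item:1b} near $0$, I would fall back on the approach of Proposition~\ref{thm:conv_appro_1}. In that approach we take coefficients truncated away from $0$, apply Lemma~\ref{lm:a1} and Proposition~\ref{cor:non_cross} to them, and localize before the hitting time of level $1/N$. Letting $N\to\infty$ then recovers the claim before absorption, and after absorption the argument concludes as in case~(a).
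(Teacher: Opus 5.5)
Your proposal is correct and follows essentially the same route as the paper. Both identify each absorbed dyadic line with the whole-plane line $\cShat_{r,\cdot}(a)$ up to $\tau_{(a,r)}$, both invoke whole-plane non-crossing (the paper uses property~\ref{item:R3} of $\cShat$; your Proposition~\ref{cor:non_cross} gives the same thing on $\cD$), and both handle the post-absorption range by nonnegativity. The only differences are cosmetic: the paper argues by contradiction and shifts the comparison to $z_0=\tau_{(a_1,r_1)}$, whereas you derive $\tau_{(a_1,r_1)}\le\tau_{(a_2,r_2)}$ directly, and your zero extension of $\sigma$ to $x\le 0$ already satisfies~\ref{item:1b} with $\rho_m(z)=\sqrt{Cz}$, so your fallback argument is never needed.
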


\begin{proof}
    Let $\cShat$ denote the stochastic flow on $\r^2$ constructed in Remark~\ref{rmk:direct_proof}. We claim that for any fixed $(a_1,r_1), (a_2,r_2)\in\cD$, the paths $\cS_{r_1,\cdot}(a_1)$ and $\cS_{r_2,\cdot}(a_2)$ do not cross. That is, there do not exist $z>y\ge r_1\vee r_2$ such that
    \begin{align}\label{eq:cros_if}
        \bigl(\cS_{r_1,y}(a_1)-\cS_{r_2,y}(a_2)\bigr)
        \bigl(\cS_{r_1,z}(a_1)-\cS_{r_2,z}(a_2)\bigr)<0.
    \end{align}
    If $z\le \tau_{(a_i,r_i)}$ for $i=1,2$, then~\eqref{eq:cros_if} is excluded by property~\ref{item:R3} of $\cShat$. By symmetry, suppose~\eqref{eq:cros_if} holds with $z>\tau_{(a_1,r_1)}$. Then $\cS_{r_2,z}(a_2)>\cS_{r_1,z}(a_1)=0$, which implies $z<\tau_{(a_2,r_2)}$ and $\cS_{r_2,z_0}(a_2)>\cS_{r_1,z_0}(a_1)$ for $z_0:=\tau_{(a_1,r_1)}$. It follows that $\cShat_{r_1,y}(a_1)>\cShat_{r_2,y}(a_2)$ while $\cShat_{r_1,z_0}(a_1)<\cShat_{r_2,z_0}(a_2)$, a contradiction. This concludes the proof of the claim.
\end{proof}

\begin{lemma}\label{lm:consi}
	The following properties hold almost surely:
    \vspace{-0.5em}
	\begin{enumerate}[label=\textup{(\roman*)}]
		\item For every $x\in \r$, the set $M_+(x):=\{\cS_{r,x}(a):\, r<x,\,(a,r)\in\cD\}$ is dense in $ \r_+$.
		\vspace{-0.5em}
		\item Fix $(a,r)\in \h$, and let $\cS_{r,\cdot}(a)$ denote the unique strong solution to~\eqref{eq:SDE_PSR}, absorbed at $\tau_{(a,r)}$. Then, almost surely, for all $x\ge r$,
		\begin{align}\label{eq:cS_I_+}
			\cS_{r,x}(a)
			= \inf \left\{ \cS_{r_n,x}(a_n):\, r_n< r,\ \cS_{r_n,r}(a_n)>a,\ (a_n,r_n)\in \cD \right\}.
		\end{align}
	\end{enumerate}
\end{lemma}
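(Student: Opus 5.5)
The plan is to follow the pattern of Lemma~\ref{prop:density}, Lemma~\ref{prop:density2} and Lemma~\ref{lm:consis}. The absorbed PSR lines are compared with the lines of the whole-plane flow $\cShat$ of Remark~\ref{rmk:direct_proof}, which exists by Theorem~\ref{thm:ex}. The non-crossing property among dyadic lines is supplied by Lemma~\ref{lm:non_cross}.

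For (i), I fix dyadic $a>0$ and $h>0$ with $a-h>0$, and a dyadic $K>0$. Before $\cShat_{r,\cdot}(a)$ leaves $(a-h,a+h)$ it cannot hit $0$. So on the event $\{\sup_{y\in[r,x]}|\cShat_{r,y}(a)-a|<h\}$ we have $\cS_{r,y}(a)=\cShat_{r,y}(a)$ for $y\in[r,x]$, exactly as in Lemma~\ref{lm:consis}(i). Partitioning $(-K,K]$ into intervals of length $t=K2^{-k}$, the union bound and Markov's inequality with the fourth-moment estimate~\eqref{eq:lm_S} bound the failure probability by $2h^{-4}CKt\to0$. This needs~\eqref{eq:lm_S} for the coefficients~\eqref{eq:b_and_sig}; these satisfy the hypotheses of Proposition~\ref{thm:conv_appro_1} by the preceding remark. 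Taking a countable union over dyadic $a,h,K$ gives density of $M_+(x)$ in $(0,\infty)$, hence in $\r_+$. I start with $a>0$ so that the target window stays away from the boundary, which is why $M_+(x)$ is claimed dense only in $\r_+$.

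For (ii), I enlarge the countable family by adjoining the lines started at $(a_n,r)$ with $a_n\in\bD\cup\{a\}$. Lemma~\ref{lm:non_cross} still holds for this family: its proof uses only property~\ref{item:R3} of $\cShat$ together with the absorption rule, and $\cShat$ is a flow for all starting points. Non-crossing then shows that $\cS_{r,x}(a)$ is at most the right-hand side of~\eqref{eq:cS_I_+}. For the reverse inequality, I take dyadic $a_n\downarrow a$ and use (i) to pick $(a_n',r_n')\in\cD$ with $r_n'<r$ and $\cS_{r_n',r}(a_n')\in(a,a_n)$. Non-crossing gives $\cS_{r_n',x}(a_n')\le\cS_{r,x}(a_n)$. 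It then remains to show $\cS_{r,x}(a_n)\to\cS_{r,x}(a)$ almost surely for all $x\ge r$.

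I expect this right-continuity in the initial value, across absorption, to be the main obstacle. Before absorption, Proposition~\ref{thm:conv_appro_2} applied to $\cShat$ gives $\cShat_{r,\cdot}(a_n)\to\cShat_{r,\cdot}(a)$ uniformly on compacts. The same convergence for the absorbed paths holds on $[r,\tau_{(a,r)})$, since $\cShat_{r,\cdot}(a)$ stays bounded away from $0$ on compacts there. The difficulty is at and after $\tau_{(a,r)}$, where $\cS_{r,\cdot}(a)$ is $0$ but $\cS_{r,\cdot}(a_n)$ might not be absorbed. Remark~\ref{rmk:direct_proof} handles this with a coming-down-from-infinity property that is postponed to later work, so I would avoid it.

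Instead I would argue that $\cShat_{r,\cdot}(a)$ hits $0$ at $\tau:=\tau_{(a,r)}$ and then strictly enters $(-\infty,0)$ immediately after $\tau$. By the strong Markov property, this reduces to the solution started from $0$ becoming negative in every interval $(0,\epsilon)$. Near $0$ the drift $1/(2\alpha+1)$ is constant, and the diffusion coefficient is $\bar\sigma(x)=4x/(2\alpha+1)$ for $x>0$; the $x<0$ side needs a separate look at the extension. Given this, uniform convergence of $\cShat_{r,\cdot}(a_n)$ forces it below $0$ shortly after $\tau$, so $\tau_{(a_n,r)}\to\tau$. The absorbed paths then converge uniformly on compacts.

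If the negative-excursion property fails, I would fall back on using (i) to sandwich dyadic lines absorbed by time $\tau+\epsilon$ between $\cS_{r,\cdot}(a)$ and $\cS_{r,\cdot}(a_n)$.
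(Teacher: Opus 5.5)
Your part (i), and your reduction of part (ii) to showing $\cS_{r,x}(a_n)\to\cS_{r,x}(a)=0$ for all $x>\tau:=\tau_{(a,r)}$, agree with the paper, which likewise repeats the proof of Lemma~\ref{prop:density2} together with Proposition~\ref{thm:conv_appro_2}. The gap is in this last step.

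Your main claim, that $\cShat_{r,\cdot}(a)$ enters $(-\infty,0)$ immediately after $\tau$, is false. With the coefficients \eqref{eq:b_and_sig} on $\r$, the indicator $\mathbbm{1}_{\{0<s\le x\}}$ vanishes for $x\le0$, so $\bar\sigma(x)=0$ there, while $b\equiv1/(2\alpha+1)>0$. Suppose the line were negative on some $(s,x]$ with $\cShat_{r,s}(a)=0$. Its martingale part would then have constant bracket on $[s,x]$, hence be constant there, giving $\cShat_{r,x}(a)=(x-s)/(2\alpha+1)>0$, a contradiction. So $\cShat_{r,\cdot}(a)\ge0$ at all times: it is the scaled BESQ$^1$ process reflected at $0$.

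Consequently $\cShat_{r,\cdot}(a_n)\ge\cShat_{r,\cdot}(a)\ge0$, and uniform convergence of $\cShat_{r,\cdot}(a_n)$ to $\cShat_{r,\cdot}(a)$ says nothing about whether $\cShat_{r,\cdot}(a_n)$ hits $0$ shortly after $\tau$. This is why Remark~\ref{rmk:direct_proof} has to invoke the coming-down-from-infinity property.

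Your fallback does not repair this. Dyadic lines squeezed between $\cS_{r,\cdot}(a)$ and $\cS_{r,\cdot}(a_n)$ lie \emph{below} $\cS_{r,\cdot}(a_n)$, so their absorption gives no upper bound on $\cS_{r,x}(a_n)$. An upper bound would need an absorbed line lying above $\cS_{r,\cdot}(a_n)$, i.e.\ one passing just above $a$ at time $r$. That is exactly the right-continuity you are trying to prove. Part (i) only gives density at a fixed time, with no control on absorption times.

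The missing ingredient is a quantitative use of the strong Markov property at the stopping time $\tau$. Since $\tau_{(a_n,r)}\ge\tau$ by Lemma~\ref{lm:non_cross}, we have $\cS_{r,\tau}(a_n)=\cShat_{r,\tau}(a_n)\to0$ almost surely. Conditionally on $\cF_\tau$, the line restarted at time $\tau$ from $b=\cS_{r,\tau}(a_n)$ equals, until absorption, $(2\alpha+1)^{-1}\bigl(\sqrt{(2\alpha+1)b}+B_t\bigr)^2$ with $B$ a standard Brownian motion. Set $g(b):=\p\bigl(\inf_{0\le t\le\delta}B_t>-\sqrt{(2\alpha+1)b}\bigr)$, which tends to $0$ as $b\downarrow0$. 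Dominated convergence then gives, for every $\delta>0$,
\begin{align*}
\p\bigl(\tau_{(a_n,r)}>\tau+\delta\bigr)\le\e\bigl[g\bigl(\cS_{r,\tau}(a_n)\bigr)\bigr]\longrightarrow0 .
\end{align*}
Since $\tau_{(a_n,r)}$ is nonincreasing in $n$ (again by Lemma~\ref{lm:non_cross}), this upgrades to $\tau_{(a_n,r)}\downarrow\tau$ almost surely. Hence for every $x>\tau$ one has $\cS_{r,x}(a_n)=0$ for all large $n$. The immediate negative excursion you were looking for does exist, but for the Brownian motion $B$ in this representation, not for the SDE solution $\cShat$.
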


\begin{proof}
The proof of~(i) is identical to that of Lemma~\ref{lm:consis}(i) and is omitted.
	
Let $\{a_n\}_{n\ge1}\subset\bD_+$ be any sequence with $a_n\downarrow a$, and let $\{\cS_{r,\cdot}(a_n)\}_{n\ge1}$ denote the unique strong solutions to~\eqref{eq:SDE_PSR}, absorbed at $\tau_{(a_n,r)}$. By Lemma~\ref{lm:non_cross}, we have $\tau_{(a_n,r)}\ge \tau_{(a,r)}$. Repeating the proof of Lemma~\ref{prop:density2} and applying Proposition~\ref{thm:conv_appro_2}, it suffices to show that, almost surely, for all $x> \tau_{(a,r)}$,
\begin{align}\label{eq:L1_conv_kill}
    \lim_{n\to\infty} \cS_{r,x}(a_n)=\cS_{r,x}(a)=0.
\end{align}
The monotonicity of the map $\bD_+\ni a\mapsto \cS_{r,x}(a)$, also implied by Lemma~\ref{lm:non_cross}, reduces~\eqref{eq:L1_conv_kill} to showing that for any $\varepsilon>0$, $\lim_{n\to\infty}\p \left(\cS_{r,x}(a_n)>\varepsilon \right)=0$, for all $\delta>0$ and $x= \tau_{(a,r)}+\delta$.
Note that $\tau:=\tau_{(a,r)}$ is a stopping time with respect to $\cF$ and satisfies $\tau<\infty$ almost surely. Let $\p^b(\cS_{r,x}>\varepsilon)$ denote $\p(\cS_{r,x}(b)>\varepsilon)$. The strong Markov property for It\^o diffusions (see, e.g., \cite[Theorem~7.2.4]{ok2013}) yields
\begin{align*}
    \p\! \left(\cS_{r,x}(a_n)>\varepsilon \right)=\e \bigl[ \p \!\left(\cS_{\tau ,x}(\cS_{r,\tau}(a_n))>\varepsilon \,\middle|\, \cF_{\tau} \right) \bigr]
    =\e \bigl[ \p^{\,\cS_{r,\tau}(a_n)}\! \left(\cS_{0 ,\delta}>\varepsilon  \right) \bigr].
\end{align*}

Hence, by Lemma~\ref{lm:non_cross} and the fact that $\lim_{n\to\infty} \cS_{r,\tau}(a_n)=0$, we obtain
\begin{align}\label{con_zero}
    \nonumber  \lim_{n\to\infty}\p \left(\cS_{r,x}(a_n)>\varepsilon \right)\le &\; \lim_{b_n\downarrow 0}\p\! \left(\cS_{0,\delta}(b_n)>\varepsilon \right)\\
    \nonumber  \le &\; \lim_{b_n\downarrow 0}\p \bigg( \inf_{0\le t\le \delta}(2\alpha+1)^{-1}\bigg[\sqrt{(2\alpha+1)b_n}+B_{t}\bigg]^2>0\bigg)\\
    = &\; \lim_{b_n\downarrow 0}\p \left(\inf_{0\le t\le \delta}B_t>-\sqrt{(2\alpha+1)b_n} \right)=0,
\end{align}
where $B$ is a standard Brownian motion. The second inequality follows since, on the event $\{\cS_{0,x}(b_n)>\varepsilon\}$, the process $(\cS_{0,t}(b_n),\, 0\le t\le x)$ evolves as a scaled $\mathrm{BESQ}^1$ process, and hence admits the representation $(2\alpha+1)^{-1}\bigl[\sqrt{(2\alpha+1)b_n}+B_{t}\bigr]^2$.
\end{proof}

\begin{proof}[Proof of Theorem~\ref{thm:forward}]
By the proof of Theorem~\ref{thm:ex}, the existence and uniqueness of the PSR flow are guaranteed by Lemmas~\ref{lm:non_cross} and~\ref{lm:consi}, while its continuity follows from the proof of Proposition~\ref{prop:conti}.
\end{proof}

Let $\cS$ be a PSR flow. The dual flow $\cS^*$ is given, for $b\ge0$ and $x\le r$, by
\begin{align}\label{eq:def_dual_+_PSR}
	\cS_{r,x}^*(b):=\inf\{a\ge0:\,\cS_{x,r}(a)>b\}.
\end{align}
The family $\cS^*:=\{(\cS^*_{r,x}(a),\,x\le r)\}_{(a,r)\in\h}$ is called the \emph{dual} (or \emph{backward}) \emph{PSR flow}. In the following, we establish the self-duality of the PSR flow.

\begin{theorem}[Self-duality]\label{thm:dual_PSR}
    Let $\cS$ be the PSR flow from Theorem~\ref{thm:forward}, and let $\cS^*$ be its dual defined by~\eqref{eq:def_dual_+_PSR}. Then the families $\{(\cS_{r,x}(a),\,x\ge r)\}_{(a,r)\in\h}$ and $\{(\cS^*_{-r,-x}(a),\,x\ge r)\}_{(a,r)\in\h}$ are identical in law.
\end{theorem}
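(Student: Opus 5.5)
The plan is to run the proof of Theorem~\ref{thm:dual_strip} with a \emph{time-dependent} boundary condition. The first step is to identify the candidate backward coefficients via~\eqref{eq:rela}. For $\sigma(x,s)=2s^\alpha x^{-\alpha}\mathbbm{1}_{\{0<s\le x\}}$ one computes
\[
\bar\sigma(x)=4\int_0^x s^{2\alpha}x^{-2\alpha}\,\dd s=\frac{4x}{2\alpha+1},
\qquad
\tfrac12\partial_x\bar\sigma(x)=\frac{2}{2\alpha+1}.
\]
Hence
\[
\sigma^*=-\sigma,
\qquad
b^*=-\frac{1}{2\alpha+1}+\frac{2}{2\alpha+1}=\frac{1}{2\alpha+1}=b .
\]
So the backward SDE~\eqref{sde:back} is exactly~\eqref{eq:SDE_PSR} driven by $-\cW^*$, and $-\cW^*$ is again a standard white noise on $\h$.

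Next I would pin down the boundary behaviour, which is the real content of self-duality. In~\eqref{eq:SDE_PSR} absorption only occurs at levels $x\ge r\vee 0$. For levels $x<0$ a line reflects, as a scaled $\mathrm{BESQ}^1$ does near $0$; this is not an absorbed process, since the drift $1/(2\alpha+1)$ corresponds to dimension one. Following the exchange $\eta\leftrightarrow\eta^*$ of Theorem~\ref{thm:dual_strip}, the dual lines should therefore be absorbed at original levels $<0$ and reflect at original levels $>0$. After the reversal $x\mapsto -x$, this says a dual line is reflected at reversed times $<0$ and absorbed at reversed times $\ge 0$, which is exactly the PSR boundary rule.

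So I would define $\cS^{\mathrm b}$ by requiring that $(\cS^{\mathrm b}_{-r,-x}(a),\,x\ge r)$ be the PSR flow of Theorem~\ref{thm:forward} driven by $-\cW^*$. This flow exists, is unique and is continuous by Theorem~\ref{thm:forward} applied to that noise. It then suffices to show $\cS^{\mathrm b}=\cS^*$ almost surely, since the two families are then images of white noises with the same law under the same measurable map.

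To prove $\cS^{\mathrm b}=\cS^*$ I would repeat the two parts of the proof of Theorem~\ref{thm:dual_strip}. For the \textbf{non-crossing} part, fix dyadic starting points and suppose a crossing occurs on $[r_1,r_2]$. As in the claim there, I localize to dyadic $y_0<z_0$ on which both lines stay in $(1/N,\infty)$, or else reach a contradiction via the boundary. On $(1/N,\infty)$ the coefficients are regular: $\int(\sigma(x,s)-\sigma(y,s))^2\dd s$ is Lipschitz in $|x-y|$ away from $0$, and $\bar\sigma$ is linear. So a Lemma~\ref{lm:C2}-type modification equal to $(\sigma,b)$ there satisfies \ref{item:1a}--\ref{item:1d} on $\r$, and Proposition~\ref{prop:non-crossing} gives the contradiction.

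The boundary case needs the \textbf{absorption condition}. I must exclude that one line is absorbed at $0$ at some level $s$ at which the other line is also at $0$. At levels $s\ge 0$ the forward line may be absorbed while the backward line reflects there; at levels $s<0$ the roles swap. In either case the reflecting line is a time-homogeneous diffusion which near $0$ behaves like a scaled $\mathrm{BESQ}^1$, so it has no atom at $0$ at deterministic times. The conditioning argument of Remark~\ref{rmk:sufficient_condi} then applies: $\tau$ is a stopping time, the strong solution is a functional of the noise after $\tau$, and that noise is independent of $\cF_\tau$. With non-crossing~\eqref{non_cross_for_proof} in hand, the \textbf{duality relation} follows verbatim from the two-inequality argument using Lemma~\ref{lm:consi} in place of~\eqref{eq:cS_I}.

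I expect the main obstacle to be this boundary step, because of the switch in boundary behaviour at level $0$. The case analysis in the claim of Theorem~\ref{thm:dual_strip} must be redone with $\eta$ depending on the sign of the level. The delicate configuration is two lines meeting at $0$ exactly at level $0$, or one line straddling level $0$ while in contact with the boundary. For that case I would first try to show it has probability zero using the same no-atom and strong Markov argument at the deterministic level $0$.
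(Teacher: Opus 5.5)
Your proposal is correct and follows the paper's own proof: identify $(\sigma^*,b^*)=(-\sigma,b)$ from $\tfrac12\partial_x\bar\sigma=\tfrac{2}{2\alpha+1}$, take $\cS^{\mathrm b}$ to be the PSR flow driven by $-\cW^*$, get the absorption condition from the no-atom property of scaled $\mathrm{BESQ}^1$ via the conditioning argument of Remark~\ref{rmk:sufficient_condi}, and rerun the non-crossing and duality steps of Theorem~\ref{thm:dual_strip}. The level-dependent boundary rule you flag as the main obstacle causes no extra trouble: forward lines are absorbing at every level $u\ge 0$ and backward lines at every level $u\le 0$, so any level at which the coinciding lines touch $0$ in the claim of Theorem~\ref{thm:dual_strip} falls into one of its two existing branches, and level $0$ falls into both.
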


\begin{proof}
    Recall the pair $(\sigma,b)$ in~\eqref{eq:b_and_sig}. For $x\in\r_+$, we have $b(x)=-b(x)+\tfrac{1}{2}\partial_x \bar\sigma(x)$, and hence $(\sigma^*,b^*):=(-\sigma,b)$ fulfills~\eqref{eq:rela}. Let $(\cS^{\mathrm b}_{-r,-x}(a),\,x\ge r)_{(a,r)\in\h}$ be the PSR flow driven by $-\cW^*$. It suffices to show that $\cS^{\mathrm b}=\cS^*$ almost surely. Since each PSR flow line has the law of a scaled BESQ$^1$ process killed at $\tau_{(a,r)}$, the absorption condition follows from Remark~\ref{rmk:sufficient_condi}. The result then follows by the argument in Theorem~\ref{thm:dual_strip}.
\end{proof}

We call the PSR flow and its dual $(\cS,\cS^*)$ the \emph{double polynomially self-repelling flow} (or \emph{double PSR flow}), and glue them in the sense of~\eqref{def:overline_S} to obtain the two-sided PSR flow $\overline{\cS}$. By the argument in the proof of Theorem~\ref{thm:dual_strip},~\eqref{non_cross_for_proof} holds for all $(a_i,r_i)\in \cD$, $i=1,2$, and all $x\in \r$. Together with the argument in Section~\ref{sec:non-cros}, this yields the following non-crossing property for $\overline{\cS}$.

\begin{proposition}
    Let $\overline{\cS}$ be the two-sided PSR flow. Then, almost surely, for all $(a_i,r_i)\in \h$, $i=1,2$, and all $y,z\in \r$,
    \begin{align*}
        \overline\cS_{r_1,y}(a_1)< \overline\cS_{r_2,y}(a_2) \ \Longrightarrow \ \overline\cS_{r_1,z}(a_1)\le \overline\cS_{r_2,z}(a_2).
    \end{align*}
\end{proposition}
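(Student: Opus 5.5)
The plan is to transcribe the argument of Proposition~\ref{prop:non-crossing} to the PSR setting. The only inputs that proof uses are: the dyadic non-crossing statement for the glued family, the infimum representations (Corollary~\ref{cor:ex} for $\cS$ and \eqref{eq:cS_b} for the backward flow), the one-sided relation \eqref{eq:one_per_flow}, and the duality identity \eqref{eq:back_SDE_dual}. Each has a PSR counterpart already available above.

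First, the dyadic two-sided non-crossing \eqref{non_cross_for_proof} holds for all $(a_i,r_i)\in\cD$, as stated just before the proposition. It comes from the proof of Theorem~\ref{thm:dual_strip}: local coincidence with a flow on $\r^2$ away from $0$ via Lemma~\ref{lm:C2}, plus the absorption condition from the proof of Theorem~\ref{thm:dual_PSR}. Second, the infimum representation \eqref{eq:cS_I_+} of Lemma~\ref{lm:consi}(ii) gives the analogue of Corollary~\ref{cor:ex} for $\cS$. Applied to the backward PSR flow $\cS^{\mathrm b}=\cS^*$ (itself a PSR flow driven by $-\cW^*$), it gives the analogue of \eqref{eq:cS_b}. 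Third, \eqref{eq:one_per_flow} follows verbatim from properties \ref{item:R1}--\ref{item:R3} of Theorem~\ref{thm:forward}, and the duality \eqref{eq:def_dual_+_PSR} is the PSR version of \eqref{eq:back_SDE_dual}.

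With these in place, I will argue by contradiction exactly as before. Take $r_1<r_2$, and suppose $\overline\cS_{r_1,y}(a_1)<\overline\cS_{r_2,y}(a_2)$ but $\overline\cS_{r_1,z}(a_1)>\overline\cS_{r_2,z}(a_2)$. Property \ref{item:R3}, used for both the forward and the backward flows, forces $y\vee z>r_1$ and $y\wedge z<r_2$.

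\textbf{Case 1} ($y,z\in[r_1,r_2]$). I approximate the two lines strictly from above by dyadic lines, using \eqref{eq:cS_I_+} together with the density in Lemma~\ref{lm:consi}(i) for both flows. These dyadic lines would then cross, contradicting \eqref{non_cross_for_proof}.

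\textbf{Case 2} ($z>r_2$ or $y<r_1$) and \textbf{Case 3} ($z<r_1$ or $y>r_2$). These reduce to Case~1 via \eqref{eq:one_per_flow} and the duality, as in Proposition~\ref{prop:non-crossing}.

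The main obstacle is the boundary $0$. In Case 1, the strict inequality $\cS_{r_1,z}(a_1)>\cS^*_{r_2,z}(a_2)\ge0$ ensures the larger line is positive, so the interval used to choose the dyadic approximant is a nonempty subinterval of $\r_+$ and density in $\r_+$ suffices. In Case 3, I pick $b$ strictly between two values, one of which is positive, so $b>0$ and \eqref{eq:def_dual_+_PSR} applies with $b\ge0$. I would check that no step requires approximating a line at $0$ from below, since absorbed lines identically $0$ break the strict inequalities. When a forward line sits at $0$, I expect that only the trivial inequality $\overline\cS\ge0$ is needed.
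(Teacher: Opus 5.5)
Your proposal is correct and takes essentially the same route as the paper. The paper also first obtains \eqref{non_cross_for_proof} for dyadic pairs in $\cD$ via the argument of Theorem~\ref{thm:dual_strip}, with the absorption condition supplied in the proof of Theorem~\ref{thm:dual_PSR}, and then reruns the three-case argument of Proposition~\ref{prop:non-crossing} using \eqref{eq:cS_I_+} for both $\cS$ and $\cS^{\mathrm b}=\cS^*$, \eqref{eq:one_per_flow}, and the duality \eqref{eq:def_dual_+_PSR}. Your boundary checks are sound, for instance the chosen $b$ in Case~3 is automatically positive; one small point is that in Case~1 the dyadic approximants come directly from the infimum in \eqref{eq:cS_I_+}, so density is needed only to make that infimum set nonempty.
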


\appendix


\section{Auxiliary convergence results}\label{app:conv}

This section provides supplementary proofs for Section~\ref{subsec:conv_re}. In particular, we establish the following proposition.

\begin{proposition}\label{lm:conv_sigma}
Fix $(a,r)\in\r^2$ and assume that the coefficients $(\sigma_n,b_n)_{n \ge 1}$ and $(\sigma,b)$ satisfy the conditions in Proposition~\ref{thm:conv_appro_1}. Suppose further that, almost surely, $\cW^n \to \hat{\cW}$ in $H^{-d}_{\mathrm{loc}}(\r^2)$ and $X^n \to X$ in $C([r,\infty))$, where $X^n$ is the unique strong solution to
\begin{align}\label{sde_solu_hatS}
    X^n(x) = a +\int_r^x\!\! \int_{\r} \sigma_n(X^n(u), u, s) \cW^n(\dd s, \dd u)+ \int_r^x b_n(X^n(u),u)  \dd u,  \quad x\ge r.
\end{align}
Then, almost surely, the limit $X$ satisfies
\begin{align}\label{eq:final}
    X(x) = a + \int_r^x\!\! \int_{\r} \sigma(X(u), u, s)\hat{\cW}(\dd s, \dd u) + \int_r^x b(X(u),u) \dd u,  \quad x\ge r.
\end{align}
\end{proposition}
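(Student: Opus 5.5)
We pass to the limit in \eqref{sde_solu_hatS} term by term. The drift term is simplest. By the a.s.\ convergence $X^n\to X$ uniformly on $[r,T]$, all paths lie in a fixed compact $K$ (random, but fixed $\omega$ by $\omega$). Then
\[
\int_r^x b_n(X^n(u),u)\,\dd u \;\longrightarrow\; \int_r^x b(X(u),u)\,\dd u,
\]
uniformly on $[r,T]$. This uses the uniform convergence of $b_n$ to $b$ on $K\times[r,T]$ from \eqref{eq:assume_sig_b}, together with the continuity of $b$, which makes $b$ uniformly continuous on $K\times[r,T]$. The real work is the stochastic integral.

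First I would fix the filtrations. Let $\cG^n_x$ be generated by $\cW^n$ restricted to $\r\times[r,x]$ together with $(X^n(u))_{u\le x}$. Let $\hat\cG_x$ be the analogous filtration for $(\hat\cW,X)$, augmented and made right-continuous. Since each $X^n$ is a strong solution, $\cW^n$ is a $\cG^n$-white noise. The standard argument then shows $\hat\cW$ is a $\hat\cG$-white noise. Concretely, for test functions $\phi$ supported in $\r\times(x,\infty)$, the variable $\hat\cW(\phi)$ is independent of $\hat\cG_x$, and $\hat\cW(\phi)$ is Gaussian with variance $\|\phi\|_{L^2}^2$. Both facts pass to the limit through a.s.\ convergence in $H^{-d}_{\mathrm{loc}}$ and through finite-dimensional distributions, using bounded continuous functionals. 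So the right side of \eqref{eq:final} is a well-defined Walsh integral.

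For the martingale term $M^n(x):=\int_r^x\int\sigma_n(X^n(u),u,s)\cW^n(\dd s,\dd u)$ I would use a discretization sandwich. Let $\{\phi_j\}$ be an orthonormal basis of $L^2(\r)$ made of smooth compactly supported functions. Let $\pi_\ell$ be the time partition with mesh $2^{-\ell}$ and spatial truncation to $\sum_{j\le \ell}$. Define the elementary approximation
\[
M^{n,\ell}(x)=\sum_{k}\sum_{j\le \ell}\Big\langle \sigma_n(X^n(t_k),t_k,\cdot),\phi_j\Big\rangle\,\cW^n\big(\phi_j\otimes\mathbbm 1_{(t_k,t_{k+1}\wedge x]}\big).
\]
Define $M^{\ell}$ analogously with $\sigma$, $X$ and $\hat\cW$. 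The argument has three steps.

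\textbf{Step (a): $M^{n,\ell}\to M^\ell$ a.s.\ for fixed $\ell$.} Here I use that $\cW^n(\phi_j\otimes\mathbbm 1_I)\to\hat\cW(\phi_j\otimes\mathbbm 1_I)$. Indicators are not in $H^{d}$, so I would first replace $\mathbbm 1_I$ by smooth mollifications, with an $L^2$ error controlled uniformly in $n$. I would also use that $\langle\sigma_n(X^n(t_k),t_k,\cdot),\phi_j\rangle\to\langle\sigma(X(t_k),t_k,\cdot),\phi_j\rangle$. This follows from \eqref{eq:assume_sig_b} and the $L^2$-continuity in $x$ in condition~\ref{item:1b}.

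\textbf{Step (b): error bounds uniform in $n$.} By the Itô isometry, $\e\sup_{x\le T}|M^n-M^{n,\ell}|^2$ is bounded by
\[
C\,\e\int_r^T\Big\|\sigma_n(X^n(u),u,\cdot)-P_\ell\,\sigma_n(X^n(\lfloor u\rfloor_\ell),\lfloor u\rfloor_\ell,\cdot)\Big\|_{L^2}^2\,\dd u ,
\]
where $P_\ell$ is the projection onto $\mathrm{span}\{\phi_j:j\le\ell\}$. I split this into three pieces. The time-regularity piece is handled by \eqref{assu:conti_t}, transferred to $\sigma_n$ through the uniform convergence \eqref{eq:assume_sig_b}. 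The space-modulus piece uses $\rho_m$ and Lemma~\ref{lm:kanke}. The projection-tail piece is $\|(I-P_\ell)\sigma_n(y,t,\cdot)\|$. I expect this tail to be the main obstacle. It is small uniformly over $n$ and over $(y,t)$ in compacts because $\{\sigma(y,t,\cdot)\}$ is a compact subset of $L^2$ (by continuity in $(y,t)$, Lemma~\ref{app:lem_conti}), and the $\sigma_n$ are uniformly close to it. Localization to $|X^n|\le m$ is handled with the moment bound \eqref{eq:lm_tight} and the growth bound \eqref{eq:assume_lin_bound}.

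\textbf{Step (c): the limit.} Step (b) with $\sigma,X,\hat\cW$ in place of $\sigma_n,X^n,\cW^n$ gives $M^\ell\to M:=\int\sigma(X,u,s)\hat\cW$ in probability. Combining (a)–(c) through a triangle inequality yields $M^n\to M$ in probability, uniformly on compacts. Taking a further subsequence gives a.s.\ convergence, and \eqref{eq:final} follows from the a.s.\ identity \eqref{sde_solu_hatS} in the limit.
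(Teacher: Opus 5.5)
Your argument is correct, but it takes a different route from the paper.

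\textbf{The paper's route.} The paper never proves directly that $\int\sigma_n(X^n)\,\dd\cW^n\to\int\sigma(X)\,\dd\hat\cW$. It defines $M$ as the almost sure limit of $M^n=X^n-a-\int b_n(X^n)$ (Lemma~\ref{lem:L2_convergence_drift}) and then identifies $M$ through its martingale structure. Uniform integrability and Doob--Meyer give $\langle M\rangle=\int\!\int\sigma(X)^2$ (Lemma~\ref{lm:mart_M}) and $\langle M,N(\phi)\rangle=\int\!\int\sigma(X)\phi$ (Lemma~\ref{lm:claim}). The covariation is extended to $I=\int\sigma(X)\,\dd\hat\cW$ through simple predictable approximations (Lemmas~\ref{lm:predic} and~\ref{lm:qua_cov}), and the proof concludes from $\langle M-I\rangle=0$. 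This needs only convergence of the coefficients along the limit path plus $L^4$ bounds. It requires no discretization error estimate that is uniform in $n$.

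\textbf{Your route.} You use the classical Skorokhod-type sandwich, relying only on It\^o's isometry and Chebyshev. It proves convergence of the stochastic integrals themselves. As a by-product, it shows that $\sigma(X(u),u,\cdot)$ lies in the closure of simple processes, so the content of Lemma~\ref{lm:predic} comes for free. It also never uses the sign condition $\sigma_n\sigma\ge0$, which the paper invokes in Lemma~\ref{lm:claim} to obtain a submartingale.

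\textbf{The cost, and points to tighten.}
\begin{enumerate}
\item You need the spatial projection $P_\ell$ and the $L^2$-compactness of $\{\sigma(y,t,\cdot)\}$. This compactness follows directly from \ref{item:1b} together with \eqref{assu:conti_t}. It does not follow from Lemma~\ref{app:lem_conti}, which concerns $\bar\sigma$.
\item The moduli $\rho_m$ in \ref{item:1b} for $\sigma_n$ may depend on $n$. Uniformity in $n$ must therefore come from \eqref{eq:assume_sig_b}, via $\|\sigma_n(y,t,\cdot)-\sigma_n(y',t,\cdot)\|_{L^2}\le 2\sup_{K\times[r,T]}\|\sigma_n-\sigma\|_{L^2}+\rho_m(|y-y'|)$. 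The time piece is handled the same way. It suffices to control $\limsup_{n}$ rather than $\sup_n$.
\item In Step (a), mollifying the time indicators only yields convergence in probability, not almost surely. That is all your triangle inequality requires.
\end{enumerate}
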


The proof of Proposition~\ref{lm:conv_sigma} relies on the following sequence of lemmas. Throughout the rest of this section, we work under the assumptions and notation of Proposition~\ref{lm:conv_sigma}.

\begin{lemma}\label{lem:range}
    For any $T \ge r$, there exists a random compact set $K \subset \r$ such that, almost surely, the ranges of $(X^n)_{n\ge1}$ and $X$ on $[r,T]$ are contained in $K$, and $\e \bigl[\sup_{z\in K}|z|^4\bigr]<\infty$.
\end{lemma}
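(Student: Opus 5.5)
The natural choice is $K:=[-Z,Z]$ with
$$
Z:=\sup_{n\ge1}\sup_{r\le x\le T}|X^n(x)|\;\vee\;\sup_{r\le x\le T}|X(x)|.
$$
By construction the ranges of every $X^n$ and of $X$ on $[r,T]$ lie in $K$, so the whole lemma reduces to two facts: $Z<\infty$ almost surely, and $\e[Z^4]<\infty$.

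Finiteness is immediate from the hypothesis. Almost surely $X^n\to X$ uniformly on $[r,T]$, so there is a random $N$ with $\sup_{[r,T]}|X^n-X|\le1$ for all $n>N$. Writing $Y_n:=\sup_{[r,T]}|X^n|$ and $Y:=\sup_{[r,T]}|X|$, this gives
$$
Z\le (Y+1)\vee\max_{n\le N}Y_n<\infty \quad\text{a.s.}
$$

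For the moment bound, I would first get uniform $L^p$ bounds, as in the proof of Lemma~\ref{lm:kanke}. Each $X^n$ solves \eqref{sde_solu_hatS}, and the coefficients satisfy the uniform linear growth \eqref{eq:assume_lin_bound}. Burkholder--Davis--Gundy applied to the martingale part, whose bracket is $\int\bar\sigma_n(X^n(u),u)\dd u\le K_\sigma'\int(1+|X^n(u)|)^2\dd u$, followed by Gronwall, should give
$$
\sup_n\e[Y_n^p]\le C_p<\infty \quad\text{for every } p\ge1.
$$
Fatou's lemma then gives $\e[Y^p]\le C_p$ as well.

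The main obstacle is turning these bounds into a bound on $\e[\sup_nY_n^4]$. A uniform bound on $\e[Y_n^p]$ does not control the supremum over infinitely many $n$, and the union bound $\sum_n\e[Y_n^p]$ diverges. What I would try first is to use the almost sure convergence to reduce the supremum to a tail event. For $\lambda\ge2$,
$$
\p(Z>\lambda)\le \p(Y>\lambda-1)+\p\Bigl(\sup_n Y_n>\lambda,\ \sup_{n}\sup_{[r,T]}|X^n-X|>1\Bigr),
$$
and the aim is to make the second term summable in $\lambda^3\,\dd\lambda$. To do this, I would estimate the excursions of $X^n$ above level $\lambda$ through the common structure of the equations. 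The drift part is bounded pathwise, via Gronwall, by $K_b'$ and the running maximum. For the martingale parts $M^n$, I would use the convergence $\cW^n\to\hat\cW$ in $H^{-d}_{\mathrm{loc}}$: the noises are then controlled by a single almost surely finite random norm, which should allow a domination of $\sup_n\sup_{[r,T]}|M^n|$ with moments of all orders.

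If this domination cannot be made uniform in $n$, the fallback is to enlarge $K$ using the quantitative rate implicit in Proposition~\ref{thm:conv_appro_1}. That rate should give $\e[\sup_{[r,T]}|X^n-X|^4]\to0$ fast enough along the full sequence for $\sum_n\e[\sup_{[r,T]}|X^n-X|^4]<\infty$. One would then conclude from
$$
Z\le Y+\Bigl(\sum_n\sup_{[r,T]}|X^n-X|^4\Bigr)^{1/4},
$$
which gives $\e[Z^4]<\infty$. I expect establishing this summability, or the uniform domination of the stochastic integrals, to be the genuinely delicate step. Everything else is routine.
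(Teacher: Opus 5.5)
You correctly reduce the lemma to $K=[-Z,Z]$, and you correctly establish $Z<\infty$ almost surely, $\sup_n\e[Y_n^p]<\infty$ and, by Fatou, $\e[Y^p]<\infty$. You also isolate the real crux: these facts do not control $\e[\sup_nY_n^4]$. The paper's two-line proof uses exactly these ingredients and passes over this point silently.

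However, your proposal leaves this step open, and neither of your routes can close it.
\begin{itemize}
\item The ``single random norm'' idea fails for two reasons. First, $M^n=\int_r^{\cdot}\!\int\sigma_n(X^n(u),u,s)\,\cW^n(\dd s,\dd u)$ is not a continuous functional of $\cW^n$ in $H^{-d}_{\mathrm{loc}}$: the integrand depends on $X^n$ and can be an indicator in $s$. Second, $\sup_n\|\cW^n\|$ is itself only almost surely finite, with no moment control, for the same reason as $\sup_nY_n$.
\item The fallback is circular. Proposition~\ref{thm:conv_appro_1} is proved via Proposition~\ref{lm:conv_sigma}, which uses the present lemma. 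Its compactness proof also gives no rate.
\end{itemize}

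In fact the full-sequence bound cannot be derived at all. The hypotheses of Proposition~\ref{lm:conv_sigma} fix only each pair $(X^n,\cW^n)$ and the almost sure convergence, not the coupling across $n$. Here is a counterexample.
\begin{itemize}
\item Take $\sigma_n=\sigma=\mathbbm{1}_{[0,1]}(s)$ and $b_n=b=0$, so $X^n=a+\cW^n([0,1]\times[r,\cdot])$.
\item Let $V$ be uniform and independent of $\hat\cW$, and let $S_n\subset[0,1]$ be disjoint with $|S_n|=\delta_n\asymp 1/(n\log^2 n)$.
\item Let $A_n$ be the set of noises whose solution satisfies $\sup_{[r,T]}|X|>q_n$, where $q_n$ is chosen so that $\p(A_n)=\delta_n$. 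Then $q_n\asymp\sqrt{\log n}$.
\item Define $\cW^n$ as follows, using copies independent of everything else: a copy of the noise conditioned on $A_n$ if $V\in S_n$; a copy conditioned on $A_n^c$ if $V\notin S_n$ and $\hat\cW\in A_n$; and $\hat\cW$ otherwise.
\end{itemize}
The law of $\cW^n$ is $\delta_n\mathcal{L}(\cW\mid A_n)+(1-\delta_n)\mathcal{L}(\cW\mid A_n^c)$, so each $\cW^n$ is a white noise. Moreover $\cW^n=\hat\cW$ (hence $X^n=X$) for all large $n$ almost surely. Yet $\e[\sup_n\sup_{[r,T]}|X^n|^4]\ge\sum_n\delta_n q_n^4=\infty$.

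The repair uses only estimates you already have. The conclusion of Proposition~\ref{lm:conv_sigma} concerns only $X$ and $\hat\cW$, and its hypotheses survive passage to subsequences, so you may pass to one.
\begin{itemize}
\item Your uniform $L^8$ bound makes $\sup_{[r,T]}|X^n-X|^4$ uniformly integrable, hence $\e[\sup_{[r,T]}|X^n-X|^4]\to0$.
\item Choose $(n_k)$ with $\sum_k\e[\sup_{[r,T]}|X^{n_k}-X|^4]<\infty$. Your final display along $(n_k)$ then gives $\e[Z^4]<\infty$.
\end{itemize}
Alternatively, note that Lemmas~\ref{lem:L2_convergence_drift}, \ref{lm:mart_M} and~\ref{lm:predic} only need $Z<\infty$ almost surely together with $\e[\sup_{[r,T]}|X|^4]<\infty$. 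You have already proved both.
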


\begin{proof}
    By~\eqref{eq:lm_tight}, $\sup_n \e \bigl[\sup_{x\in [r,T]} |X^n(x)|^4\bigr]<\infty$. Together with the almost sure uniform convergence of $X^n$ to $X$ on $[r,T]$, this completes the proof.
\end{proof}

\begin{lemma}\label{lem:L2_convergence_drift}
	For any $T\ge r$, as $n\to\infty$, the following convergence holds almost surely,
	\begin{align*}
		\sup_{r\le x\le T} \left| \left(X^n(x)-\int_r^x b_n(X^n(u),u)\dd u\right) - \left(X(x)-\int_r^x b(X(u),u)\dd u\right) \right| \to 0.
	\end{align*}
\end{lemma}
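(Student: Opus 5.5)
Since $X^n\to X$ uniformly on $[r,T]$ almost surely, it suffices to show
\[
\sup_{r\le x\le T}\Bigl|\int_r^x b_n(X^n(u),u)\,\dd u-\int_r^x b(X(u),u)\,\dd u\Bigr|\to0 \quad\text{almost surely.}
\]
We bound this supremum by $\int_r^T |b_n(X^n(u),u)-b(X(u),u)|\,\dd u$ and split the integrand as
\[
|b_n(X^n(u),u)-b(X^n(u),u)|+|b(X^n(u),u)-b(X(u),u)|.
\]
Throughout we fix an $\omega$ in the almost sure event on which $X^n\to X$ uniformly on $[r,T]$, and let $K=K(\omega)$ be the compact set from Lemma~\ref{lem:range}. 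By that lemma, all ranges $X^n([r,T])$ and $X([r,T])$ lie in $K$.

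For the first term, $X^n(u)\in K$ for all $u\in[r,T]$ and all $n$, so the term is bounded by $\sup_{r\le t\le T}\sup_{x\in K}|b_n(x,t)-b(x,t)|$. This tends to $0$ by the second limit in~\eqref{eq:assume_sig_b}, applied with the (now deterministic) compact set $K(\omega)$. Integrating over $[r,T]$ multiplies it only by $T-r$.

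For the second term, Assumption~\ref{assu:H} says $b$ is continuous on $\r^2$, hence uniformly continuous on the compact set $K\times[r,T]$. Since $\sup_{u\in[r,T]}|X^n(u)-X(u)|\to0$, uniform continuity gives $\sup_{u\in[r,T]}|b(X^n(u),u)-b(X(u),u)|\to0$. Alternatively, one can take $m$ with $K\subset[-m,m]$ and use~\ref{item:2d}, which gives the bound $r_m(\sup_u|X^n(u)-X(u)|)$; this tends to $0$ because a concave nondecreasing $r_m$ with $\int_{0+}r_m^{-1}=\infty$ satisfies $r_m(0+)=0$. The continuity argument is cleaner and I would use it.

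The only point needing care is that $K$ is random, while~\eqref{eq:assume_sig_b} is stated for deterministic compact sets. This is harmless. The convergence in~\eqref{eq:assume_sig_b} is a deterministic statement about the coefficients and holds for every compact set, so we may apply it pathwise with $K(\omega)$ for each fixed $\omega$. No expectation or uniform-integrability argument is required, and the moment bound $\e[\sup_{z\in K}|z|^4]<\infty$ from Lemma~\ref{lem:range} is not used at this step; it matters only for the later stochastic-integral lemmas. Combining the two terms then yields the claimed almost sure uniform convergence.
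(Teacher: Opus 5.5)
Your proof is correct and follows essentially the same route as the paper. It makes the same reduction to $(T-r)\sup_{u}|b_n(X^n(u),u)-b(X(u),u)|$, and it uses the same pathwise split on the random compact set from Lemma~\ref{lem:range} together with~\eqref{eq:assume_sig_b}; the only difference is that for the second term the paper uses the modulus $r_m$ from~\ref{item:2d} (your stated alternative), whereas you prefer uniform continuity of $b$ on $K\times[r,T]$, which works equally well.
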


\begin{proof}
	Fix $T\ge r$. Let $\sup_x$ denote $\sup_{x\in [r,T]}$. Since $X^n\to X$ uniformly on $[r,T]$ almost surely, it suffices to prove $\sup_x |\int_r^x \Delta b_n(u)\dd u|\le \int_r^T |\Delta b_n(u)| \dd u\le (T-r)\sup_x |\Delta b_n(x)|\to 0$ almost surely, where $\Delta b_n(u):=b_n(X^n(u), u) - b(X(u), u)$. By Lemma~\ref{lem:range}, let $\Omega_0$ be an event of probability one such that the ranges of $X^n$ and $X$ on $[r,T]$ are contained in a bounded set $K$ and $X^n(\cdot,\omega) \to X(\cdot,\omega)$ uniformly on $[r,T]$ for all $\omega \in \Omega_0$. For each $\omega\in \Omega_0$, by~\eqref{eq:assume_sig_b} and condition~\ref{item:2d}, there exist $m\ge 1$ such that $K\subset [-m,m]$ and a nondecreasing function $r_m$ on $\r_+$ such that
	\begin{align*}
    	\sup_x |\Delta b_n(x)|
    	&\le \sup_x \Bigl( |b_n(X^n(x),x)-b(X^n(x),x)| 
    	+ |b(X^n(x),x)-b(X(x),x)| \Bigr) \\
    	&\le \sup_x \sup_{z\in K}|b_n(z,x)-b(z,x)| 
    	+ r_m\!\left(\sup_x |X^n(x)-X(x)|\right)
    	\longrightarrow 0,
    \end{align*}
    as $n\to \infty$. This completes the proof.
\end{proof}

For each $n\ge 1$, in view of the SDE~\eqref{sde_solu_hatS}, we define the process $(M^n_x)_{x\ge r}$ by
\begin{align}\label{def:M_n}
    M_x^n := \int_r^x \!\! \int_{\r} \sigma_n(X^n(u), u, s) \cW^n(\dd s, \dd u)
    = X^n(x) - a - \int_r^x b_n(X^n(u),u) \dd u.
\end{align}

\noindent
By~\eqref{eq:assume_lin_bound} and~\eqref{eq:lm_tight}, and the Burkholder--Davis--Gundy (BDG) inequality (see, e.g., \cite[Chapter~IV.4]{RevYor}), there exists a constant $C_{\mathrm{B}}>0$ such that
\begin{align}\label{eq:L_4_bound1}
	\nonumber \sup_n \e\bigl[(M_x^n)^4\bigr] 
	\le&\; C_{\mathrm{B}} \sup_n \e\!\left[\Bigl(\int_r^x\!\! \int_{\r} \sigma_n(X^n(u), u, s)^2 \dd s\dd u \Bigr)^2\right]\\
	\le &\; C_{\mathrm{B}} (K_\sigma')^2 (x-r)^2 \sup_n \hat\e\!\left[\Bigl(1+\sup_{r\le u\le x} |X^n(u)|\Bigr)^4\right]<\infty.
\end{align}
Therefore, $(M_x^n)_{x \ge r}$ is a fourth-integrable martingale with respect to the filtration $\cF^n_x:=\sigma\bigl(X^n(y) : r \le y \le x\bigr)$.
Lemma~\ref{lem:L2_convergence_drift} implies that, almost surely, $(M_x^n)_{x \ge r}$ converges in $C([r,\infty))$. Denote the limit by $(M_x)_{x\ge r}$, then, almost surely, for all $x\ge r$,
\begin{align}\label{eq:M_x=fcS}
    M_x = X(x) - a - \int_r^x b(X(u),u) \dd u.
\end{align}
Moreover, the uniform $L^4$-boundedness of $(M_x^n)_{n \ge 1}$ implies that it is uniformly integrable for each fixed $x$. Hence, the limit $M_x$ is integrable. 
Define $\tilde\cF_x := \sigma\bigl(X(y) : r \le y \le x\bigr)$.

\begin{lemma}\label{lm:mart_M}
    The process $(M_x)_{x\ge r}$ is a $\tilde{\cF}$-martingale with quadratic variation
    $$
    \langle M \rangle_x=\int_r^x\!\! \int_{\r} \sigma(X(u), u, s)^2 \dd s\dd u.
    $$
\end{lemma}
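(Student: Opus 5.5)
We show that $M$ is a martingale and that $M_x^2-A_x$ is a martingale, where $A_x:=\int_r^x\int_\r \sigma(X(u),u,s)^2\,\dd s\,\dd u$. Both properties are passed to the limit from the approximating processes $M^n$, which are martingales with $\langle M^n\rangle_x=A^n_x:=\int_r^x\int_\r\sigma_n(X^n(u),u,s)^2\,\dd s\,\dd u$. Since $A$ is continuous, adapted and of finite variation, these two facts identify $\langle M\rangle=A$.

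We use the standard test-function characterization. Fix $r\le x<y$, points $r\le y_1<\dots<y_k\le x$, and a bounded continuous $\phi:\r^k\to\r$. Because $M^n$ is an $\cF^n$-martingale and $(M^n)^2-A^n$ is as well (BDG together with the $L^4$ bound \eqref{eq:L_4_bound1} makes the local martingale a true martingale), we have
\begin{align*}
\e\bigl[(M^n_y-M^n_x)\,\phi(X^n(y_1),\dots,X^n(y_k))\bigr]&=0,\\
\e\bigl[\bigl((M^n_y)^2-(M^n_x)^2-(A^n_y-A^n_x)\bigr)\,\phi(X^n(y_1),\dots,X^n(y_k))\bigr]&=0.
\end{align*}
These expectations are taken on the Skorokhod space where $X^n\to X$ almost surely. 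We then let $n\to\infty$. Since $X^n\to X$ uniformly on compacts almost surely, $M^n\to M$ almost surely by Lemma~\ref{lem:L2_convergence_drift} and \eqref{eq:M_x=fcS}, and $\phi(X^n(y_i))\to\phi(X(y_i))$. The uniform $L^4$ bound on $M^n_y$ gives uniform integrability of $M^n_y$ and of $(M^n_y)^2$ (the latter is bounded in $L^2$). Hence $\e[(M_y-M_x)\phi]=0$ and the corresponding quadratic terms converge. A monotone class argument over $\phi$ and the times $y_i$ then yields the martingale property with respect to $\tilde\cF$.

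The main obstacle is showing $A^n_x\to A_x$ almost surely, with uniform integrability. For each $u$ we write
\[
\Bigl|\int_\r\sigma_n(X^n(u),u,s)^2\,\dd s-\int_\r\sigma(X(u),u,s)^2\,\dd s\Bigr|\le \|f_n-f\|_{L^2}\bigl(\|f_n\|_{L^2}+\|f\|_{L^2}\bigr),
\]
where $f_n:=\sigma_n(X^n(u),u,\cdot)$ and $f:=\sigma(X(u),u,\cdot)$. We then split
\[
\|f_n-f\|_{L^2}\le\|\sigma_n(X^n(u),u,\cdot)-\sigma(X^n(u),u,\cdot)\|_{L^2}+\|\sigma(X^n(u),u,\cdot)-\sigma(X(u),u,\cdot)\|_{L^2}.
\]
On the random compact $K\subset[-m,m]$ given by Lemma~\ref{lem:range}, the first term tends to zero uniformly in $u\in[r,T]$ by \eqref{eq:assume_sig_b}. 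The second term is bounded by $\rho_m(\sup_u|X^n(u)-X(u)|)$ by \ref{item:1b}, which also tends to zero because $\rho_m(0+)=0$. (Nondecreasing $\rho_m$ with $\int_{0+}\rho_m^{-2}=\infty$ forces $\rho_m(z)\to0$; if needed we use $\rho_m(0+)=0$ directly.) The factors $\|f_n\|+\|f\|$ are bounded via \eqref{eq:assume_lin_bound} and \ref{item:2c} by $C(1+\sup_K|z|)$. Thus $A^n_x\to A_x$ almost surely. Moreover $|A^n_y|\le K'_\sigma(y-r)(1+\sup_K|z|)^2$, which is integrable by Lemma~\ref{lem:range}; dominated convergence then handles the $A^n$ term.

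It remains to note that $A$ is $\tilde\cF$-adapted, continuous and increasing. Then $M^2-A$ being a martingale gives $\langle M\rangle=A$, which is the claim.
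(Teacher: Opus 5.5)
Your proposal is correct and follows essentially the same route as the paper. Both pass the martingale property of $M^n$ and of $(M^n)^2-\langle M^n\rangle$ to the limit via test functions, and both show $\langle M^n\rangle_x\to A_x$ almost surely by the same split of $\sigma_n(X^n(u),u,\cdot)-\sigma(X(u),u,\cdot)$ using \eqref{eq:assume_sig_b} and \ref{item:1b}. The only differences are cosmetic: you handle the $A^n$ term by dominated convergence with the envelope from Lemma~\ref{lem:range} rather than by an $L^2$ bound on $(M^n_x)^2-\langle M^n\rangle_x$, and you identify $\langle M\rangle=A$ directly from the characterization of quadratic variation rather than by citing Doob--Meyer uniqueness.
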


\begin{proof}
	From the identity~\eqref{eq:M_x=fcS}, $M_x$ is measurable with respect to $\tilde{\cF}$. To show that $M_x$ is a $\tilde\cF$-martingale, it suffices, by the monotone class theorem, to verify that for any integer $k$, any $r \le t_1 < \cdots < t_k \le y < x$, and any bounded continuous function $f:\r^k \to \r$, 
    \begin{align}\label{eq:mar_prop}
        \e \!\left[(M_x - M_y) f\bigl(X_{t_1}, \ldots, X_{t_k}\bigr)\right] = 0.
    \end{align}
    Set $Z := f\bigl(X_{t_1}, \ldots, X_{t_k}\bigr)$ and $Z_n := f\bigl(X^n_{t_1}, \ldots, X^n_{t_k}\bigr)$. Since $(M^n_x)_{x \ge r}$ is a $\cF^n$-martingale and $Z_n$ is bounded, it follows that for all $n \ge 1$,
    \begin{align*}
        \e\!\left[ (M^n_x - M^n_y) Z_n \right] = 0.
    \end{align*}
    By the almost sure uniform convergence of $M^n$ and $X^n$ on compact intervals and the continuity of $f$, we have $(M^n_x - M^n_y)Z_n \to (M_x - M_y)Z$ almost surely.
    Moreover, by~\eqref{eq:L_4_bound1}, $\{(M^n_x - M^n_y) Z_n\}_{n \ge 1}$ is uniformly integrable. Thus, 
    \begin{align*}
        \e \!\left[(M_x - M_y) Z\right]
        = \lim_{n \to \infty} \e\!\left[ (M^n_x - M^n_y) Z_n \right]
        = 0.
    \end{align*}

	\smallskip
    For each $n$, the process $\tilde M^n_x:=(M^n_x)^2-\langle M^n\rangle_x$, $x\ge r$, is a martingale, where
    \begin{align*}
        \langle M^n \rangle_x=\int_r^x\!\! \int_\r \sigma_n(X^n(u),u,s)^2\dd s\dd u.
    \end{align*}
    Fix $T\ge r$. By Lemma~\ref{lem:range}, there exists a bounded random set $K$ such that $X^n([r,T])\subset K$ for all $n\ge1$ and $X([r,T])\subset K$. For any $x \in [r,T]$,
    \begin{align*}
        \left| \langle M^n \rangle_x - \int_r^x\!\! \int_\r \sigma(X(u),u,s)^2 \dd s \dd u \right| &\le \int_r^x \!\! \int_\r \left| \sigma_n(X^n(u),u,s)^2 - \sigma(X(u),u,s)^2 \right| \dd s \dd u \\
        &\le (T-r)\sup_{r \le u \le T} \sqrt{\Xi_n^+(u) \Xi_n^-(u)},
    \end{align*}
    where 
    $$
    \Xi_n^+(u):=\int_\r \bigl( \sigma_n(X^n(u),u,s) + \sigma(X(u),u,s) \bigr)^2 \dd s\le 2\bar\sigma_n(X^n(u),u)+2\bar\sigma(X(u),u)
    $$
    is uniformly bounded by condition~\ref{item:2c} and~\eqref{eq:assume_lin_bound}, and
    \begin{multline}
        \Xi_n^-(u):=\int_\r \bigl( \sigma_n(X^n(u),u,s) - \sigma(X(u),u,s) \bigr)^2 \dd s\\
        \le 2 \sup_{z\in K}\int_\r \bigl( \sigma_n(z,u,s) - \sigma(z,u,s) \bigr)^2 \dd s+ 2\int_\r \bigl( \sigma(X^n(u),u,s) - \sigma(X(u),u,s) \bigr)^2 \dd s. \label{es_eq2}
    \end{multline}
    By~\eqref{eq:assume_sig_b}, the first term vanishes uniformly over $u\in [r,T]$, while condition~\ref{item:1b} implies that there exist $m\ge1$ with $K\subset[-m,m]$ and a nondecreasing function $\rho_m$ on $\r_+$ such that the second term is bounded by $\rho_m\!\left(\sup_{r\le u\le T}|X^n(u)-X(u)|\right)^2$, which converges to zero. Therefore, $\langle M^n \rangle_x$ converges almost surely in $C([r,\infty))$ to
	$$A_x := \int_r^x \!\! \int_\r \sigma(X(u),u,s)^2\dd s \dd u.$$

    \smallskip
	By~\eqref{eq:L_4_bound1}, the BDG inequality implies that
	\begin{align*}
		\sup_n \e\!\left[\bigl(\tilde M^n_x\bigr)^2\right]
		\le 2\sup_n \e[(M_x^n)^4] + 2\sup_n \e[\langle M^n\rangle_x^2] \le 2(C_{\mathrm{B}}+1) \sup_n \e[\langle M^n\rangle_x^2] < \infty.
	\end{align*}
    It follows that for any fixed $x$, the family $\bigl(\tilde M^n_x\bigr)_{n\ge 1}$ is uniformly integrable. Combining this with the almost sure uniform convergence of $M^n$ to $M$ and $\langle M^n \rangle$ to $A$ on compact intervals, we conclude that $\tilde M^n_x$ converges in $C([r,\infty))$ to a process $\tilde M$, which preserves the martingale property~\eqref{eq:mar_prop} (with $M$ replaced by $\tilde M$) and satisfies
	\begin{align*}
		-(M_x)^2=-\tilde M_x-A_x.
	\end{align*}
    The left-hand side is a $\tilde\cF$-supermartingale with $M_r=0$. Moreover, $A$ is a continuous, nondecreasing, $\tilde\cF$-predictable process with $A_r = 0$. By the Doob--Meyer decomposition (see, e.g., \cite[Chapter~III, Theorem~16]{prot05}), the decomposition is unique. Therefore, $A$ coincides almost surely with the quadratic variation of $M$. This completes the proof.
\end{proof}

For any nonnegative function $\phi \in C_c^\infty(\r^2)$ and any $x\ge r$, define the martingales
\begin{align*}
	N_x^n(\phi) := \int_r^x \!\! \int_{\r} \phi(u, s) \cW^n(\dd s, \dd u), \qquad 
	N_x(\phi) :=\int_r^x \!\! \int_{\r} \phi(u, s)\hat{\cW}(\dd s, \dd u).
\end{align*}
The almost sure convergence $\cW^n \to \hat{\cW}$ in $H^{-d}_{\mathrm{loc}}(\r^2)$ implies that, for every $f \in C_c^\infty(\r^2)$
\begin{equation}\label{conv_S}
    \langle \cW^n, f \rangle := \iint_{\r^2} f(u,s) \cW^n(\dd s, \dd u) \longrightarrow \langle \hat{\cW}, f \rangle := \iint_{\r^2} f(u,s) \hat{\cW}(\dd s, \dd u),
\end{equation}
almost surely. Although $\phi(u,s)\mathbbm{1}_{[r,x]}(u)$ does not belong to $C_c^\infty(\r^2)$, the indicator $\mathbbm{1}_{[r,x]}$ can be approximated by smooth compactly supported functions. Combined with the It\^o isometry for the white noises $\cW^n$ and $\hat{\cW}$, this extends \eqref{conv_S} to integrands of the form $\phi(u,s)\mathbbm{1}_{[r,x]}(u)$. Consequently, for each fixed $x\ge r$, $N^n_x(\phi)\to N_x(\phi)$ in probability.

\begin{lemma}\label{lm:claim}
	The quadratic covariation of $M$ and $N(\phi)$ satisfies
	\begin{align}\label{eq:step_1}
		\langle M, N(\phi) \rangle_x = \int_r^x \!\! \int_{\r} \sigma(X(u), u, s)\, \phi(u, s) \, \dd s \dd u.
	\end{align}
\end{lemma}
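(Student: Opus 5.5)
The strategy mirrors the proof of Lemma~\ref{lm:mart_M}. We identify the covariation as the unique continuous finite-variation correction that makes the product $M_xN_x(\phi)$ a martingale for the joint filtration. For each $n$, $M^n$ and $N^n(\phi)$ are stochastic integrals against the same white noise $\cW^n$. Hence
\[
M^n_xN^n_x(\phi)-\int_r^x\!\!\int_\r \sigma_n(X^n(u),u,s)\phi(u,s)\,\dd s\,\dd u
\]
is a martingale with respect to the filtration $\cG^n_x:=\sigma(X^n(y),N^n_y(\psi): r\le y\le x,\ \psi\in C_c^\infty)$, which is contained in the one generated by $\cW^n$ on $\r\times[r,x]$. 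We work with the enlarged limiting filtration $\cG_x:=\sigma(X(y),N_y(\psi):y\le x)$. First we recheck that $M$ remains a $\cG$-martingale with $\langle M\rangle$ given by Lemma~\ref{lm:mart_M}, and that $N(\phi)$ is a $\cG$-martingale with $\langle N(\phi)\rangle_x=\int_r^x\int\phi^2$. This is the same monotone class argument, now with test functionals $f(X_{t_1},\dots,X_{t_k},N_{t_1}(\psi_1),\dots)$.

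The steps in order are as follows.

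\textbf{Step 1: convergence of the correction term.} We show that $A^n_x:=\int_r^x\int\sigma_n(X^n)\phi$ converges a.s.\ uniformly on $[r,T]$ to $A_x:=\int_r^x\int\sigma(X(u),u,s)\phi\,\dd s\,\dd u$. By Cauchy--Schwarz,
\[
|A^n_x-A_x|\le (T-r)\,\|\phi\|_{L^\infty_uL^2_s}\,\sup_u\sqrt{\Xi^-_n(u)}.
\]
The bound~\eqref{es_eq2} together with~\eqref{eq:assume_sig_b} and~\ref{item:1b} gives $\sup_u\Xi^-_n(u)\to0$, exactly as in the previous proof.

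\textbf{Step 2: convergence of $N^n(\phi)$.} We upgrade the convergence $N^n_x(\phi)\to N_x(\phi)$ in probability to a.s.\ convergence for countably many $x$, along a subsequence, using a diagonal argument over rational $x$ and a countable dense family of $\phi$. Uniform $L^4$ bounds follow from Gaussianity, since $\e[N^n_x(\phi)^4]=3(\int_r^x\int\phi^2)^2$ for every $n$. These bounds also give tightness in $C$, so that the convergence holds for all $x$.

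\textbf{Step 3: passage to the limit.} For $y<x$ and a bounded continuous test functional $Z_n$ of the past, we have
\[
\e\bigl[(M^n_xN^n_x-A^n_x-M^n_yN^n_y+A^n_y)Z_n\bigr]=0.
\]
The family is uniformly integrable because $\sup_n\e[(M^n_x)^4]<\infty$ by~\eqref{eq:L_4_bound1}, the $N^n$ have uniform Gaussian moments, and $|A^n_x|\le C\int_r^x(1+\sup|X^n|)\,\dd u$ has a bounded second moment by~\eqref{eq:assume_lin_bound}. Passing to the limit gives that $MN(\phi)-A$ is a $\cG$-martingale. Since $A$ is continuous, adapted and of finite variation, uniqueness of the Doob--Meyer/covariation decomposition (equivalently, polarizing $\langle M\pm N\rangle$) yields $\langle M,N(\phi)\rangle=A$, which is~\eqref{eq:step_1}.

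The main obstacle is Step 2 together with the filtration issue: the convergence~\eqref{conv_S} only concerns smooth test functions. One must verify that the joint limits $(X,M,N(\cdot))$ are adapted to a common filtration for which $N(\phi)$ is still a martingale with the correct bracket. The first approach to try is the monotone class argument applied jointly to $(X^n,N^n(\psi))$, using the independence of increments of $\cW^n$ after time $y$ from $\cG^n_y$.
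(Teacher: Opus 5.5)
Your proposal is correct and follows essentially the same route as the paper: pass the martingale property of $M^nN^n(\phi)-\langle M^n,N^n(\phi)\rangle$ to the limit using the uniform convergence of the correction term (via \eqref{es_eq2}) and uniform integrability, then identify the bracket by uniqueness of the decomposition. The differences are minor. The paper assumes $\sigma_n,\phi\ge0$ so that $M^nN^n(\phi)$ is a submartingale and invokes Doob--Meyer, whereas your argument (a continuous finite-variation local martingale vanishes) needs no sign condition. You also fix the common filtration $\mathcal G$ at the outset, while the paper introduces it only after the lemma. Your a.s.\ upgrade in Step~2 is harmless but not needed, since convergence in probability plus uniform integrability already suffices.
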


\begin{proof}
	Assume without loss of generality that $\sigma$ and $(\sigma_n)_{n\ge 1}$ are nonnegative. Then,
	$$
	\langle M^n, N^n(\phi) \rangle_x = \int_r^x\!\! \int_\r \sigma_n(X^n(u), u, s) \phi(u,s)\dd s \dd u
	$$
	is nondecreasing in $x$. Applying It\^{o}'s product rule, 
	$$
	M^n_x N^n_x(\phi) = M^n_r N^n_r(\phi) + \int_r^x M^n_u \dd N^n_u(\phi) + \int_r^x N^n_u (\phi)\dd M^n_u + \langle M^n, N^n(\phi) \rangle_x.
	$$
	It follows that $(M^n_x N^n_x)_{x \ge r}$ is a submartingale. Let $C_\phi := \|\phi\|_\infty^2 \cdot \operatorname{meas}(\operatorname{supp}_s(\phi))$.
	For any $T\ge r$, the Cauchy--Schwarz inequality yields
	\begin{align*}
		&\left| \langle M^n, N^n(\phi) \rangle_x - \int_r^x \!\! \int_{\r}  \sigma(X(u), u, s)\,\phi(u, s)\, \dd s \dd u \right|^2 \\
		\le&\;  (T-r)^2 C_\phi \sup_{r\le u\le T} \int_\r \bigl(\sigma_n(X^n(u), u, s)-\sigma(X(u), u, s)\bigr)^2 \dd s
	\end{align*}
	converges to $0$ uniformly on $[r,T]$ by~\eqref{es_eq2} and the subsequent argument. Let
	\begin{align*}
		Y_x^n(\phi):=M_x^n N_x^n(\phi) - \langle M^n, N^n(\phi) \rangle_x .
	\end{align*}
	Then $(Y_x^n(\phi))_{x \ge r}$ is a martingale, and, for each fixed $x\ge r$, it converges in probability to
	\begin{align*}
		Y_x(\phi):=M_x N_x(\phi) -  \int_r^x \!\! \int_{\r}  \sigma(X(u), u, s) \phi(u, s) \dd s \dd u.
	\end{align*}
	
	We establish uniform integrability of the family $\bigl(Y_x^n(\phi)\bigr)_{n\ge 1}$ by the following estimate, together with the previous arguments: for each $n\ge 1$,
	\begin{align*}
		\e \bigl[(Y_x^n(\phi))^2\bigr] 
		\le \e \bigl[(M_x^n)^4\bigr] + \e \bigl[(N_x^n(\phi))^4\bigr] + 2 (T-r)^2 C_\phi \e\! \left[\sup_{r\le u\le T} \bar\sigma_n(X^n(u), u) \right].
	\end{align*}
	Arguing as in the proof of Lemma~\ref{lm:mart_M}, we conclude that $(M_x N_x(\phi))_{x \ge r}$ is a submartingale and $(Y_x(\phi))_{x \ge r}$ is a martingale. Therefore, we may apply the Doob--Meyer decomposition to obtain~\eqref{eq:step_1}. 
\end{proof}

We define the joint filtration $\cG$ by
$$
\cG_x:=\tilde{\cF}_x \vee \sigma\bigl(\hat\cW(B): B\in \mathcal{B}([r,x]\times \r)\bigr), \qquad x\ge r.
$$
To see that the martingale properties are preserved under this enlargement, fix $r \le u < x$ and let $\Phi$ be any bounded continuous functional depending on finitely many variables of the form $X^n(t_i)$ and $\langle \cW^n, \phi_j \rangle$, where $t_i \le u$ and $\phi_j$ are smooth with compact support in $[r,u]\times \r$. Since $M^n$ is a martingale with respect to the natural filtration of $\cW^n$, 
$$
\hat\e\!\left[ (M_x^n - M_u^n)\Phi \right] = 0.
$$
By the almost sure convergence of $X^n$ and $\cW^n$ (in their respective topologies) alongside the uniform integrability of $M^n$, passing to the limit yields that $M$ is a continuous martingale with respect to $\cG$. The same argument verifies that $\hat\cW$ remains a martingale measure with respect to $\cG$. The following lemma establishes the well-posedness of the stochastic integral $\int_r^x\!\! \int_{\r} \sigma(X(u), u, s)\hat{\cW}(\dd s, \dd u)$. Without loss of generality, we take $[r,x]=[0,T]$. Based on \cite[pp.~292--295]{Walsh1986AnIT}, let $\mathcal{P}$ denote the Hilbert space of $\cG$-predictable processes $f$ for which $\hat\e\!\left[\int_0^T\!\! \int_{\r} f(u, s)^2 \dd s \dd u\right]<\infty$.

\begin{lemma}\label{lm:predic}
    The process $H(u,s) := \sigma(X(u), u, s)$ belongs to $\mathcal{P}$.
\end{lemma}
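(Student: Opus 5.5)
We must show two things: that $H(u,s)=\sigma(X(u),u,s)$ is $\cG$-predictable as a random field on $[0,T]\times\r$, and that $\hat\e\bigl[\int_0^T\!\int_\r H(u,s)^2\,\dd s\,\dd u\bigr]<\infty$.

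\emph{Integrability.} By condition~\ref{item:2c},
\[
\int_\r H(u,s)^2\,\dd s=\bar\sigma(X(u),u)\le K_\sigma\bigl(1+|X(u)|\bigr)^2 .
\]
Integrating over $[0,T]$ and taking expectations, we get
\[
\hat\e\Bigl[\int_0^T\!\!\int_\r H^2\,\dd s\,\dd u\Bigr]\le K_\sigma T\,\hat\e\Bigl[\bigl(1+\sup_{u\le T}|X(u)|\bigr)^2\Bigr].
\]
The right-hand side is finite by Lemma~\ref{lem:range}, which even gives fourth moments.

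\emph{Predictability.} Here one has to be careful, because $\sigma$ is merely Borel in $s$ and only $L^2$-continuous in $(x,t)$. I will approximate $H$ in $\mathcal P$ by processes that are manifestly predictable, and use the fact that $\mathcal P$ is a closed subspace of $L^2(\Omega\times[0,T]\times\r)$.

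Step 1 (spatial localisation). Fix $m$ and the stopping time $\tau_m:=\inf\{u:|X(u)|\ge m\}\wedge T$, which is a $\cG$-stopping time since $X$ is continuous and $\cG$-adapted. It suffices to treat $H\mathbbm 1_{\{u\le\tau_m\}}$ and then let $m\to\infty$ by dominated convergence, using the integrable bound above.

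Step 2 (time discretisation). Fix a partition $0=u_0<u_1<\dots<u_N=T$ of mesh $h$, and set
\[
H^h(u,s):=\sigma\bigl(X(u_k),u_k,s\bigr)\mathbbm 1_{\{u_k<u\le u_{k+1}\}}\mathbbm 1_{\{u_k<\tau_m\}} .
\]
Each summand is $\cG_{u_k}\otimes\mathcal B(\r)$-measurable times the indicator of $(u_k,u_{k+1}]$, since $(\omega,s)\mapsto\sigma(X(u_k,\omega),u_k,s)$ is a composition of Borel maps. Hence $H^h$ is a simple predictable function in Walsh's sense.

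Step 3 (convergence). We estimate $\int_\r|H^h(u,s)-H(u,s)|^2\,\dd s$ on $\{u\le \tau_m\}$ by splitting into two terms. The first term is
\[
\int_\r\bigl(\sigma(X(u_k),u_k,s)-\sigma(X(u),u_k,s)\bigr)^2\,\dd s\le\rho_m\bigl(|X(u)-X(u_k)|\bigr)^2
\]
by condition~\ref{item:1b}. The second term is
\[
\int_\r\bigl(\sigma(X(u),u_k,s)-\sigma(X(u),u,s)\bigr)^2\,\dd s,
\]
which tends to $0$ uniformly over $x\in[-m,m]$ by~\eqref{assu:conti_t}. Uniform continuity of $X$ on $[0,T]$ forces both terms to $0$ pointwise in $\omega$ as $h\to0$; for the first term we also need $\rho_m(z)\to0$ as $z\to0$, which follows from $\rho_m$ being nondecreasing with $\int_{0+}\rho_m^{-2}=\infty$. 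The bound $4K_\sigma(1+m)^2$ together with the boundary indicator mismatch (an $O(h)$-measure set) gives domination, so $H^h\to H\mathbbm 1_{\{u\le\tau_m\}}$ in $L^2(\hat\p\otimes\dd u\otimes\dd s)$. Therefore the limit lies in $\mathcal P$.

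The main obstacle is Step 3, specifically getting joint measurability and $L^2$-convergence from the merely $L^2(\dd s)$ regularity of $\sigma$ rather than from pointwise continuity. This is precisely why the estimate has to be organised through $\rho_m$ and~\eqref{assu:conti_t} instead of through pointwise limits in $s$.
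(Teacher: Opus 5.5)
Your proof is correct and follows essentially the same route as the paper: left-endpoint time discretization $\sigma(X(u_k),u_k,s)\mathbbm{1}_{(u_k,u_{k+1}]}(u)$, $L^2(\dd s)$-continuity from condition~\ref{item:1b} and~\eqref{assu:conti_t} together with uniform continuity of $X$, and then dominated convergence and closedness of $\mathcal P$. The only real difference is that you localize with the stopping times $\tau_m$ to get a deterministic bound, whereas the paper argues pathwise on the random compact set $K(\omega)$ of Lemma~\ref{lem:range} and dominates by the integrable variable $TC(1+\sup_{z\in K}|z|)^2$. One small point of wording: your approximants are predictable but not ``simple'' in Walsh's sense, which is harmless because only membership in $\mathcal P$ is used.
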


\begin{proof}
    By condition~\ref{item:2c} and Lemma~\ref{lem:range}, we deduce that $\e\!\left[\int_0^T\!\! \int_{\r} H(u, s)^2 \dd s \dd u\right]$ is finite. Since $\sigma$ is assumed to be of constant sign, without loss of generality, suppose that $H \ge 0$.

    For each $n \in \N$, let $u^n_k := k 2^{-n}$ for $k \ge 0$, and define
    \begin{align*}
        H_n(u,s) = \sum_{k\ge 0} \mathbbm{1}_{(u^n_{k},\, u^n_{k+1}]}(u) H(u^n_{k},s).
    \end{align*}
    Since $X$ is $\cG$-adapted, the mapping $(\omega,s)\mapsto H(u^n_k,s)$ is $\cG_{u^n_k}\otimes \mathcal B(\r)$-measurable for each $k$. It then follows from the definition of predictable processes that $H_n(u,s)$ is $\cG$-predictable for every $n \in \N$. It remains to prove that
    \begin{align*}
        \lim_{n\to \infty}\hat\e\!\left[\int_0^T\!\! \int_{\r} \bigl(H_n(u,s)-H(u,s)\bigr)^2 \dd s \dd u\right]=0.
    \end{align*}
    For almost every $\omega$ and any $u,v \in [0,T]$, since $X$ is uniformly continuous on $[0,T]$ with values in the compact set $K(\omega)$, condition~\ref{item:1b} and~\eqref{assu:conti_t} imply $\int_{\r} \bigl(H(u,s)-H(v,s)\bigr)^2 \dd s\to 0$ uniformly as $|u-v|\to 0$. Thus, almost surely,
    \begin{align*}
        \int_0^T\!\! \int_{\r} \bigl(H_n(u,s)-H(u,s)\bigr)^2 \dd s \dd u= \sum_{k\ge 0} \int_{u^n_{k}\wedge T}^{u^n_{k+1}\wedge T} \!\! \dd u \int_{\r} \bigl(H(u^n_k,s)-H(u,s)\bigr)^2 \dd s \to 0.
    \end{align*}
    Finally, by condition~\ref{item:2c}, $\int_0^T\!\! \int_{\r} H_n(u,s)^2 \dd s \dd u \le T C (1+\sup_{z\in K}|z|)^2$ for some constant $C$. The proof is completed by the dominated convergence theorem.
\end{proof}

\smallskip
In the remainder of this section, we aim to show that the martingale $M$ admits a stochastic integral representation with respect to $\hat\cW$, that is, almost surely, for all $x \ge r$,
\begin{align*}
	M_x = \int_r^x\!\! \int_{\r} \sigma(X(u), u, s)\hat{\cW}(\dd s, \dd u)=: I_x.
\end{align*}
Once this is established, the proof of Proposition~\ref{lm:conv_sigma} is complete.

\begin{lemma}\label{lm:qua_cov}
	The quadratic covariation of $M$ and $I$ satisfies
	\begin{align*}
		\langle M,I \rangle_x=\int_r^x\!\! \int_{\r} \sigma(X(u), u, s)^2 \dd s\dd u.
	\end{align*}
\end{lemma}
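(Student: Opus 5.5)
We need $\langle M,I\rangle_x=\int_r^x\int_\r \sigma(X(u),u,s)^2\,\dd s\,\dd u$, where $I$ is the Walsh integral of $H(u,s)=\sigma(X(u),u,s)$ against $\hat\cW$. The plan is to reduce to Lemma~\ref{lm:claim}, which already gives $\langle M,N(\phi)\rangle_x=\int_r^x\int_\r \sigma(X(u),u,s)\phi(u,s)\,\dd s\,\dd u$ for nonnegative $\phi\in C_c^\infty(\r^2)$. We then extend this identity from deterministic test functions to the predictable integrand $H\in\mathcal P$ from Lemma~\ref{lm:predic}, using linearity and $L^2$-continuity of the bracket.

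\textbf{Step 1: simple predictable integrands.} First we pass from nonnegative $\phi$ to arbitrary $\phi\in C_c^\infty$ by writing $\phi=\phi^+-\phi^-$ with smooth nonnegative pieces, or $\phi=(\phi+c\psi)-c\psi$ with $\psi\ge0$ a bump. Next we treat integrands of the form $f(u,s)=\xi\,\mathbbm 1_{(t_1,t_2]}(u)\phi(u,s)$, with $\xi$ bounded and $\cG_{t_1}$-measurable. For these, $\int f\,\dd\hat\cW=\xi\,(N_{\cdot\wedge t_2}(\phi)-N_{\cdot\wedge t_1}(\phi))$. The standard rule $\langle M,\xi(N^{t_2}-N^{t_1})\rangle=\xi(\langle M,N\rangle^{t_2}-\langle M,N\rangle^{t_1})$ for continuous $\cG$-martingales then gives
\[
\Bigl\langle M,\int f\,\dd\hat\cW\Bigr\rangle_x=\int_r^x\!\!\int_\r \sigma(X(u),u,s)f(u,s)\,\dd s\,\dd u .
\]
Here we use that $M$ and $N(\phi)$ are $\cG$-martingales, as verified before Lemma~\ref{lm:predic}, and that the bracket in Lemma~\ref{lm:claim} computed in $\tilde\cF$-terms agrees with the $\cG$-bracket, since it is the limit of sums of products of increments. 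The identity extends by linearity to finite sums, and by an $L^2(\dd s\,\dd u\,\dd\hat\p)$-approximation of the indicator in $s$ by smooth functions it extends to all simple predictable $f$.

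\textbf{Step 2: density and passage to the limit.} Walsh's theory gives simple $f_k\to H$ in $\mathcal P$, so that $I^k:=\int f_k\,\dd\hat\cW\to I$ in $L^2$ uniformly on $[r,T]$. By the Kunita--Watanabe inequality,
\[
\e\sup_{x\le T}\bigl|\langle M,I^k-I\rangle_x\bigr|\le \e\bigl[\langle M\rangle_T^{1/2}\langle I^k-I\rangle_T^{1/2}\bigr],
\]
and the right side is at most $\bigl(\e\langle M\rangle_T\bigr)^{1/2}\|f_k-H\|_{\mathcal P}\to0$. Here $\e\langle M\rangle_T<\infty$ by Lemma~\ref{lm:mart_M}, condition~\ref{item:2c} and Lemma~\ref{lem:range}. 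On the other side, Cauchy--Schwarz gives
\[
\e\Bigl|\int\!\!\int \sigma(X)(f_k-H)\Bigr|\le \Bigl(\e\!\int\!\!\int \sigma(X)^2\Bigr)^{1/2}\|f_k-H\|_{\mathcal P}\to0 .
\]
Taking limits yields the claimed formula with $f=H$, giving $\int\!\int \sigma(X(u),u,s)^2\,\dd s\,\dd u$.

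\textbf{Main obstacle.} The delicate point is Step 1, namely making sure that Lemma~\ref{lm:claim} and the localization identity hold with respect to the enlarged filtration $\cG$ rather than $\tilde\cF$. I would handle this by proving that $MN(\phi)-\int\!\int\sigma(X)\phi$ is a $\cG$-martingale directly. The approach is to rerun the limiting argument of Lemma~\ref{lm:claim} with test functionals $\Phi$ that also depend on $\langle\cW^n,\phi_j\rangle$, as was done when introducing $\cG$. Uniform integrability is provided by the same $L^2$ bound on $Y^n_x(\phi)$.
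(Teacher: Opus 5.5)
Your proposal is correct and follows essentially the same route as the paper: extend \eqref{eq:step_1} to simple predictable integrands $\sum_i \xi_i \mathbbm{1}_{(t_{i-1},t_i]}\phi_i$, approximate $H$ in $\mathcal P$ using Lemma~\ref{lm:predic}, and pass to the limit on both sides via the Kunita--Watanabe inequality and Cauchy--Schwarz. What you add is extra care on details the paper leaves implicit, not a different argument: you work in the enlarged filtration $\cG$, you remove the sign restriction on $\phi$ (use $\phi=(\phi+c\psi)-c\psi$, since $\phi^\pm$ need not be smooth), you pass from smooth $\phi$ to indicators in $s$, and you get convergence uniformly in $x$ instead of at fixed $x$ followed by a subsequence.
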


Lemma~\ref{lm:qua_cov}, together with Lemma~\ref{lm:mart_M} and the definition of $I_x$, implies
\begin{align*}
	\langle M-I \rangle_x=\langle M \rangle_x + \langle I \rangle_x - 2 \langle M,I \rangle_x= 0.
\end{align*}
Since $M$ and $I$ are continuous martingales vanishing at $r$, $(M_x-I_x)^2-\langle M-I \rangle_x=(M_x-I_x)^2$ is a martingale. In particular, $\e [(M_x-I_x)^2]=\e [(M_r-I_r)^2]=0$, which yields $M_x = I_x$ almost surely for all $x \ge r$, achieving the representation we aimed for. 

\begin{proof}[Proof of Lemma~\ref{lm:qua_cov}]
	Consider the simple predictable process $h$ defined by
	\begin{equation*}
		h(x,s) : = \sum_{i=1}^{n} \xi_i \mathbbm{1}_{(t_{i-1}, t_i]}(x) \phi_i(x,s),
	\end{equation*}
	where $r = t_0 < t_1 < \dots < t_n < \infty$, each $\phi_i \in C_c^\infty(\r^2)$ is deterministic, and $\xi_i$ is a bounded $\cG_{t_{i-1}}$-measurable random variable. Define for any $x\ge r$,
	\begin{align*}
		N_x(h):=\int_r^x \!\!\int_\r h(u,s) \hat\cW(\dd s,\dd u)=\sum_{i=1}^n \xi_i \int_{t_{i-1}\wedge x}^{t_i\wedge x}\int_\r \phi_i(u,s) \hat\cW(\dd s, \dd u).
	\end{align*}
	
	\noindent
	By linearity, we extend the identity~\eqref{eq:step_1} to any simple predictable process $h$, and obtain
	\begin{align*}
		\langle M, N(h)\rangle_x 
		&= \sum_{i=1}^n \xi_i \int_{t_{i-1}\wedge x}^{t_i\wedge x}\int_\r \sigma(X(u), u, s) \phi_i(u,s) \dd s\dd u \\
		&= \int_r^x \!\! \int_\r \sigma(X(u), u, s) h(u,s)\dd s\dd u.
	\end{align*}
	
	Since $M$ and $I=N(H)$ are continuous martingales, both $\int_r^x\! \int_\r H(u,s)^2 \dd s \dd u$ and $\langle M, I\rangle_x$ are continuous in $x$. Thus, it suffices to show that, for fixed $x$,
	\begin{align}\label{eq:as_id}
		\langle M, N(H)\rangle_x = \int_r^x\!\! \int_\r H(u,s)^2 \dd s \dd u \quad \text{almost surely.}
	\end{align}
	By Lemma~\ref{lm:predic}, choose simple predictable processes $(h^n)_{n\ge1}$ such that
	$$
	\lim_{n \to \infty} \e \left[\int_r^x\!\! \int_\r \left|H(u, s)-h^n(u, s)\right|^2 \dd s \dd u\right]=0.
	$$
	
	\noindent 
	Therefore, $N_x(h^n) \to N_x(H)$ in $L^2$, which implies $\langle M, N(h^n) \rangle_x \to \langle M, N(H) \rangle_x$ in $L^1$. On the other hand,  $\int_r^x\! \int_\r H(u,s) h^n(u,s) \dd s \dd u\to \int_r^x\! \int_\r H(u,s)^2 \dd s \dd u$ in $L^1$. Hence, there exists a subsequence $\bigl\{h^{n_k}\bigr\}_{k\ge 1}$ along which both sequences converge almost surely, yielding the desired identity~\eqref{eq:as_id}.
\end{proof}


\section{Approximation of the coefficients}\label{app:appro}

In this section we introduce two approximation schemes for the coefficients $(\sigma,b)$. 
Before stating the corresponding lemmas, we recall the standard cutoff functions and mollifiers that will be used in Lemma~\ref{lm:a1} and~\ref{lm:C1}.

Choose a cutoff function $\zeta\in C_c^\infty(\r)$ such that
$$
\zeta(x)= 1 \ \text{ for } |x|\le 1,\quad \operatorname{supp} \zeta= [-2,2],\quad  0\le \zeta\le 1.
$$
Let $D_n := [-n,n]$ and define $\zeta_n(x):=\zeta(x/n)$. 
Then $\zeta_n\equiv 1$ on $D_n$ and $\zeta_n\equiv 0$ on $\r\setminus D_{2n}$. Its derivative satisfies $|\zeta_n'(x)|\le \tfrac{1}{n}\sup_{u\in \r} |\zeta'(u)|=:\tfrac{B}{n}$.
Let $\eta\in C_c^\infty(\r)$ be the standard mollifier supported on $D_1$, and set $\eta_n(x):=n\eta(nx)$. 
By classical results on convolution and smoothing (see, e.g., \cite[Appendix~C.5]{Evans}), if $f\in C(U)$ is a continuous function on an open set $U\subset \r$, then $\eta_n * f \to f$ uniformly on any compact subsets of $U$.

\begin{lemma}\label{lm:a1}
Suppose that $(\sigma,b)$ satisfy Assumption~\ref{assu:H} and conditions~\ref{item:1a}, \ref{item:1b}, \ref{item:2c}, and \ref{item:2d}. Then there exists a sequence $(\sigma_n,b_n)_{n\ge1}$ such that, for any $T\ge r$,
\begin{enumerate}[label=(\roman*), nosep]
    \item \eqref{eq:assume_sig_b} holds for any compact set $K \subset \r$;
    \item \eqref{eq:assume_lin_bound} holds for all $x\in \r$ and $t\in [r,T]$ with constants $K_b'$ and $K_\sigma'$ independent of $n$;
    \item for each $n$, $(\sigma_n,b_n)$ satisfies Assumptions~\ref{assu:H} and~\ref{assu:A} for $t\in [r,T]$, and $\sigma_n$ has the same sign as $\sigma$.
\end{enumerate}
\end{lemma}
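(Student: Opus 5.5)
The construction will be explicit: first cut off, then mollify in the space variable only, applied separately to $b$ and to $\sigma$. For the drift I set $b_n(x,t):=\zeta_n(x)\,(\eta_n*b(\cdot,t))(x)$. For the diffusion I set
\begin{align*}
\sigma_n(x,t,s):=\zeta_n(x)\,\mathbbm{1}_{\{|s|\le n\}}\int_\r \eta_n(x-y)\,\sigma(y,t,s)\,\dd y .
\end{align*}
The truncation in $s$ is optional. Since $\eta_n\ge0$ and $\zeta_n\ge0$, $\sigma_n$ has the same sign as $\sigma$, and smoothness in $x$ is inherited from $\eta_n$.

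\textbf{Step 1: the uniform linear bounds (ii).} On $[r,T]$, condition~\ref{item:1a} gives $|b_n(x,t)|\le \sup_{|y-x|\le1}K_b(1+|y|)\le 2K_b(1+|x|)$. For $\sigma_n$, Jensen's inequality (equivalently Minkowski's integral inequality) with the probability kernel $\eta_n(x-\cdot)$ gives
\begin{align*}
\bar\sigma_n(x,t)\le \int\eta_n(x-y)\,\bar\sigma(y,t)\,\dd y\le 4K_\sigma(1+|x|)^2 ,
\end{align*}
using~\ref{item:2c}. These constants do not depend on $n$.

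\textbf{Step 2: the convergence (i).} Fix a compact $K$ and take $n$ large enough that $\zeta_n\equiv1$ on $K^{1}:=\{x:\operatorname{dist}(x,K)\le 1\}$. Then Minkowski's inequality gives
\begin{align*}
\Big(\int_\r(\sigma_n-\sigma)^2\,\dd s\Big)^{1/2}\le \sup_{|z|\le1/n}\Big(\int_\r\big(\sigma(x-z,t,s)-\sigma(x,t,s)\big)^2\dd s\Big)^{1/2}+\Big(\int_{|s|>n}\sigma(x,t,s)^2\,\dd s\Big)^{1/2}.
\end{align*}
The first term is at most $\rho_m(1/n)$ by~\ref{item:1b}, and $\rho_m(1/n)\to0$. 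This uses that $\rho_m(0+)=0$: nondecreasing together with the divergence of $\int_{0+}\rho_m^{-2}$ forces it.

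The tail term needs convergence uniformly over $(x,t)\in K\times[r,T]$. I get this by Dini-type compactness. The map $(x,t)\mapsto\sigma(x,t,\cdot)\in L^2$ is continuous by~\ref{item:1b} and~\eqref{assu:conti_t}. So the image of $K\times[r,T]$ is compact in $L^2$, and compact subsets of $L^2$ have uniformly small tails. If this step proves awkward, I will simply drop the $s$-truncation.

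The drift converges uniformly on $K\times[r,T]$ because $b$ is uniformly continuous there.

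\textbf{Step 3: Assumptions~\ref{assu:H} and~\ref{assu:A} for fixed $n$ (iii).} Continuity of $b_n$ and property~\eqref{assu:conti_t} for $\sigma_n$ follow from the corresponding properties of $(\sigma,b)$ via convolution estimates. Condition~\ref{item:A1} holds because $b_n$ is smooth in $x$ with compact $x$-support. Its derivatives are bounded using $\sup_{[-2n-1,2n+1]\times[r,T]}|b|<\infty$ and $\|\eta_n'\|_1,\|\eta_n''\|_1<\infty$.

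For~\ref{item:A2}, differentiating under the integral gives $\partial_x^\alpha\sigma_n$ as a sum of terms $\zeta_n^{(j)}(x)\int\eta_n^{(\alpha-j)}(x-y)\,\sigma(y,t,s)\,\dd y$. Minkowski's inequality then bounds $\int(\partial_x^\alpha\sigma_n)^2\,\dd s$ by $C_n\sup_{|y|\le 2n+1}\bar\sigma(y,t)$, which is finite by~\ref{item:2c}. The bound $|a_n(x,y,t)|\le K(1+|x|)(1+|y|)$ follows from Cauchy--Schwarz and Step 1. The sign condition is already in place.

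\textbf{Main obstacle.} The main difficulty is the uniform-in-$t$ part of Step 2, where the $\rho_m$ bound and the compactness must hold simultaneously over $K\times[r,T]$. Everything else is routine Minkowski/Jensen bookkeeping.
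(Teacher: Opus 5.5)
Your proposal is correct and follows the paper's proof essentially step for step. You use the same cutoff-times-spatial-mollification $\zeta_n(x)\,(\eta_n*\cdot)$ for both $b$ and $\sigma$, Jensen for the $n$-uniform bound $\bar\sigma_n\le 4K_\sigma(1+|x|)^2$, the mollification error bound $\int(\sigma_n-\sigma)^2\,\dd s\le\rho_m(1/n)^2$ (already uniform in $t$ by (1.b)) for (i), and derivative bounds through $\eta_n^{(\beta)}$ for (A2). The only deviation is your optional truncation $\mathbbm{1}_{\{|s|\le n\}}$, which the paper does not use: it is harmless, and your compactness/Dini argument for the tails is valid, but it is also the sole source of your ``main obstacle,'' which disappears once you drop it.
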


\begin{proof}
    For each $n\ge1$, define
    \begin{align*}
        \sigma_n(x,t,s) := \zeta_n(x) \int_{\r} \eta_n(u) \sigma(x-u,t, s) \dd u, \quad
        b_n(x,t) := \zeta_n(x) \int_{\r} \eta_n(u) b(x-u,t) \dd u.
    \end{align*}

    Fix a compact set $K \subset \r$ and choose $N$ with $K \subset D_{N-1}$. For any $n \ge N$, $\zeta_n(x) \equiv 1$ on $K$. Thus, for any $x \in K$ and $t\in [r,T]$, applying Cauchy--Schwarz inequality yields
    \begin{align}\label{Sig_22}
    \nonumber \int_\r (\sigma_n(x,t,s)-\sigma(x,t,s))^2\dd s
    = &\int_{\r} \Bigl(\int_{\r} \eta_n(u)\bigl(\sigma(x-u,t,s)-\sigma(x,t,s)\bigr)\dd u\Bigr)^2 \dd s \\
    \nonumber \le &\int_{\r} \int_{\r} \eta_n(u)\bigl(\sigma(x-u,t,s)-\sigma(x,t,s)\bigr)^2\dd s\dd u \\
    \le &\int_{\r} \eta_n(u) \rho_{N}(|u|)^2\dd u = \int_{\r} \eta(v) \rho_{N}(|v|/n)^2\dd v.
    \end{align}
    The last inequality follows from condition~\ref{item:1b}. Since $\rho_N(0+)=0$, the right-hand side converges to zero by dominated convergence. This proves (i) for $\sigma_n$.

    For all $x\in \r$, $t\in [r,T]$ and integers $n$, by condition~\ref{item:2c}, we have
    \begin{multline}
        \int_\r \sigma_n(x,t,s)^2\dd s\le \int_\r \Bigl( \int_\r \eta_n(u)\sigma(x-u,t,s) \dd u \Bigr)^2\dd s
        \le \int_\r \int_\r \eta_n(u)\sigma(x-u,t,s)^2 \dd s \dd u\\
        \le K_\sigma \int_\r \eta_n(u)(1+|x-u|)^2 \dd u 
        \le K_\sigma (2+|x|)^2\le 4K_\sigma (1+|x|)^2. \label{bound_sig_n}
    \end{multline}
    Therefore, the second bound in~\eqref{eq:assume_lin_bound} holds with $K_\sigma' = 4K_\sigma$, which proves \textnormal{(ii)} for $\sigma_n$.
    
    Fix $n \ge 1$. Then $\sigma_n$ is smooth in $x$. By~\eqref{bound_sig_n} and the Cauchy--Schwarz inequality, the first bound in~\ref{item:A2} holds. To verify that $\int_\r \left(\partial_x^\beta \sigma_n(x,t,s)\right)^2\dd s$ is uniformly bounded for each $\beta = 1,2,3$, we define, for any $\beta\ge 0$,
    \begin{align*}
        F_\beta^n (x,t):=\int_\r \Bigl(\frac{\partial^\beta}{\partial x^\beta} (\eta_n*\sigma)(x,t,s) \Bigr)^2\dd s= \int_\r \Bigl( \int_\r \eta_n^{(\beta)}(u)\sigma(x-u,t,s) \dd u \Bigr)^2\dd s.
    \end{align*}
    Since the derivatives of $\zeta_n$ are bounded, the Leibniz rule reduces the verification to showing that $F_\beta^n$ is uniformly bounded on $D_{2n}\times [r,T]$ for each $\beta$. By condition~\ref{item:2c}, 
    \begin{align}\label{eq:a4}
        \nonumber F_\beta^n (x,t)\le& \int_\r \bigl| \eta_n^{(\beta)}(u)\bigr|^2  \dd u \cdot \int_\r\int_{|u|\le \frac{1}{n}} \sigma(x-u,t,s)^2 \dd u \dd s\\
        = &\; C_\beta n^{2\beta+1}\int_{|u|\le \frac{1}{n}}\bar\sigma(x-u,t) \dd u
        \le 2 C_\beta K_\sigma n^{2\beta}(2+|x|)^2,
    \end{align}
    where $C_\beta:=\int_\r |\eta^{(\beta)}(u)|^2\dd u$ is a constant. Thus, $F_\beta^n (x,t)\le 32 C_\beta K_\sigma n^{2\beta+2}$ for any $x\in D_{2n}$ and $t\in [r,T]$. This establishes~\ref{item:A2}, and hence Assumption~\ref{assu:A} for $\sigma_n$.
    Using~\eqref{assu:conti_t} for $\sigma$, 
    \begin{multline}
    \int_\r \Bigl(\int_{\r} \eta_n(u)\big[\sigma(x-u,t+h,s)-\sigma(x-u,t,s)\big]\dd u\Bigr)^2 \dd s \\
    \le C_0 n \int_{|u|\le 1} \int_{\r} \big[\sigma(x-u,t+h,s)-\sigma(x-u,t,s)\big]^2 \dd s \dd u\to 0, \quad \text{as } h \to 0 . \label{eq:conti_sig_n_t}
    \end{multline}
    This verifies Assumption~\ref{assu:H} for $\sigma_n$, completing the proof for $\sigma_n$.
    
    \smallskip
    The proofs of (i) and (ii) for $b_n$ are analogous and simpler, so we only verify (iii). For each fixed $n$,
    $$|b_n(x,t)|\le K_b\int_{\r} \eta_n(u)(1+|x-u|)\dd u\le K_b\int_{\r} \eta_n(u)(2+|x|)\dd u\le 2K_b(1+|x|),$$
    so $b_n$ satisfies~\ref{item:1a}. Since $b_n$ is smooth in the first variable with compact support, its derivatives $\partial_x b_n$ and $\partial_x^2 b_n$ are uniformly bounded for $t\in [r,T]$, and~\ref{item:A1} follows. To prove the continuity of $b_n$, it remains to show that $|b_n(x,t)-b_n(x,t')|\to 0$ as $|t-t'|\to 0$. Indeed,
    $$|b_n(x,t) - b_n(x,t')| \le \mathbbm{1}_{\{|x|\le 2n\}} \int_{\r} \eta_n(u) |b(x-u,t) - b(x-u,t')| \dd u.$$
    The integrand is bounded by $2K_b\eta_n(u)(1+|x-u|)$, which is integrable. The desired continuity follows from the dominated convergence theorem and the continuity of $b$.
\end{proof}

\begin{lemma}\label{lm:C1}
Suppose that $(\sigma,b)$ satisfy Assumption~\ref{assu:H} and conditions~\ref{item:1a}--\ref{item:1d}. Then there exists a sequence $(\sigma_n,b_n)_{n\ge1}$ such that, for any $T\ge r$,
\begin{enumerate}[label=(\roman*), nosep]
    \item \eqref{eq:assume_sig_b} and~\eqref{eq:assume_sig_partial} hold for any compact set $K \subset \r$;
    \item \eqref{eq:assume_lin_bound_1} holds for all $x\in \r$ and $t\in [r,T]$ with constants $K_b'$ and $K_\sigma'$ independent of $n$;
    \item for each $n$, $(\sigma_n,b_n)$ satisfies Assumptions~\ref{assu:H} and~\ref{assu:A} for $t\in [r,T]$, and $\sigma_n$ has the same sign as $\sigma$.
\end{enumerate}
\end{lemma}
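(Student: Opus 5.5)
We will use the same cutoff--mollifier construction as in Lemma~\ref{lm:a1}, namely
\begin{align*}
\sigma_n(x,t,s) := \zeta_n(x)\,(\eta_n*\sigma)(x,t,s),\qquad b_n(x,t):=\zeta_n(x)\,(\eta_n*b)(x,t),
\end{align*}
where the convolution acts in the first variable. Conditions~\ref{item:1a}, \ref{item:1b} and~\ref{item:1c} imply~\ref{item:2c} (integrate $\partial_x\bar\sigma$), and~\ref{item:1d} implies~\ref{item:2d}. So the proof of Lemma~\ref{lm:a1} already gives~\eqref{eq:assume_sig_b}, the linear bound on $b_n$, Assumptions~\ref{assu:H} and~\ref{assu:A} for each fixed $n$, and the sign property, since both $\zeta_n$ and $\eta_n$ are nonnegative. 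What remains is the statement about $\partial_x\bar\sigma_n$: the locally uniform convergence~\eqref{eq:assume_sig_partial}, and the bound $|\partial_x\bar\sigma_n(x,t)|\le K_\sigma'(1+|x|)$ with $K_\sigma'$ independent of $n$.

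By Lemma~\ref{lm:es_conA}(i) we can write $\bar\sigma_n=\zeta_n^2\,\bar\sigma^{(n)}$, where $\bar\sigma^{(n)}(x,t):=\int_\r \bigl((\eta_n*\sigma)(x,t,s)\bigr)^2\dd s$. Expanding the square gives
\begin{align*}
\bar\sigma^{(n)}(x,t)=\iint \eta_n(u)\eta_n(v)\,a(x-u,x-v,t)\,\dd u\,\dd v .
\end{align*}
This is a mollification of $a$ along the diagonal. For the derivative we use the polarization identity
\begin{align*}
a(y,z,t)=\tfrac12\bigl(\bar\sigma(y,t)+\bar\sigma(z,t)\bigr)-\tfrac12\int_\r\bigl(\sigma(y,t,s)-\sigma(z,t,s)\bigr)^2\dd s .
\end{align*}
The first part, after mollification, equals $\eta_n*\bar\sigma$. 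Its $x$-derivative is $\eta_n*\partial_x\bar\sigma$, which by~\ref{item:1c} is bounded by $2K_\sigma(1+|x|)$ and by continuity (Lemma~\ref{app:lem_conti}) converges to $\partial_x\bar\sigma$ locally uniformly.

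The second part is the error term
\begin{align*}
E_n(x,t):=\tfrac12\iint\eta_n(u)\eta_n(v)\,D(x-u,x-v,t)\,\dd u\,\dd v,\qquad D(y,z,t):=\int_\r\bigl(\sigma(y,t,s)-\sigma(z,t,s)\bigr)^2\dd s .
\end{align*}
Its $x$-derivative is the main obstacle. Differentiating the kernel gives $\partial_x E_n=\tfrac12\iint(\eta_n'(u)\eta_n(v)+\eta_n(u)\eta_n'(v))\,D(x-u,x-v,t)\,\dd u\,\dd v$. Here $\|\eta_n'\|_{L^1}\sim n$, so we need $D(y,z,t)=o(|y-z|)$ locally uniformly, with a bound linear in $|x|$ globally. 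Condition~\ref{item:1b} only gives $D\le\rho_m(|y-z|)^2$, which is not enough. I would try to get the sharper estimate from the $C^1$-regularity in~\ref{item:1c}--\ref{item:1d}. Write $D(y,z)=\bar\sigma(y)+\bar\sigma(z)-2a(y,z)$. The map $g(y):=a(y,z)$ satisfies $g(z)=\bar\sigma(z)$ and $g\le\tfrac12(\bar\sigma(y)+\bar\sigma(z))$ by Cauchy--Schwarz, with equality at $y=z$. So $h(y):=\tfrac12(\bar\sigma(y)+\bar\sigma(z))-a(y,z)\ge0$, and $h$ is minimized (equal to zero) at $y=z$.

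Since $\bar\sigma$ is $C^1$ with modulus $r_m$ for its derivative, the plan is to show that the minimum of the nonnegative function $h$ at $y=z$ forces $D(y,z)=2h(y)\le C|y-z|\,r_m(|y-z|)$. The difficulty is that $a$ is not known to be differentiable in $y$, so this step is delicate. If it fails, the fallback is a different approximation: mollify the nonnegative kernel differently (for instance, average $\sigma$ over $x$ jointly with a convex combination) so that $\bar\sigma_n$ becomes exactly $\eta_n*\bar\sigma$ up to a controllable error. Once $D=o(|y-z|)$ is in hand, uniformly on compacts and with the linear growth coming from~\ref{item:1c}, we get $\partial_xE_n\to0$ locally uniformly and $|\partial_xE_n|\le C(1+|x|)$.

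The cutoff contributes $2\zeta_n\zeta_n'\bar\sigma^{(n)}$. This is bounded by $(B/n)\cdot C(1+|x|)^2\mathbbm 1_{\{|x|\le 2n\}}\le C'(1+|x|)$, and it vanishes on compacts for large $n$. Together with the previous two paragraphs this yields~\eqref{eq:assume_sig_partial} and~\eqref{eq:assume_lin_bound_1}.
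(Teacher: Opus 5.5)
Your proposal has a genuine gap: the plain cutoff--mollifier construction does not satisfy the lemma, so the problem is not just a missing estimate. Reducing \eqref{eq:assume_sig_b}, the drift, the sign and Assumptions~\ref{assu:H} and~\ref{assu:A} to Lemma~\ref{lm:a1} is fine, and so is your handling of $\eta_n*\partial_x\bar\sigma$ and of the cutoff term. The failure is in $\partial_x E_n$.

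First, the bound $D(y,z)=o(|y-z|)$ does not follow from \ref{item:1c}--\ref{item:1d}. For $\sigma=\mathbbm{1}_{\{0\le s\le \phi(x)\}}$ with $\phi>0$ smooth, $\bar\sigma=\phi$ is smooth but $D(y,z)=|\phi(y)-\phi(z)|$. So your function $h$ can have a corner at its minimum.

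Second, and decisively, plain mollification genuinely violates \eqref{eq:assume_sig_partial}. Your error term is $E_n(x,t)=\int_\r \mathrm{Var}_{u\sim\eta_n}\bigl(\sigma(x-u,t,s)\bigr)\dd s$. Its $x$-derivative is governed by how the local density $\lim_{z\to y}D(y,z)/|y-z|$ varies, and the hypotheses control only $\bar\sigma$, not this density. Concretely, let $b\equiv 0$ and
\[
\sigma(x,t,s)=\mathbbm{1}_{\{0\le s\le \phi_1(x)\}}+\mathbbm{1}_{\{-\phi_2(x)\le s<0\}},\qquad \phi_1(x)=4+\arctan x+\tfrac12\arctan(x_+),\quad \phi_2(x)=4-\tfrac12\arctan(x_+).
\]
Then $\sigma\ge 0$, $\bar\sigma=8+\arctan x$, and $D(y,z)=|\phi_1(y)-\phi_1(z)|+|\phi_2(y)-\phi_2(z)|\le 2|y-z|$. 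Hence Assumption~\ref{assu:H} and \ref{item:1a}--\ref{item:1d} all hold.

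Put $H_n(v):=\int_{-\infty}^v\eta_n$. Since $\phi_1$ is increasing and $\phi_2$ nonincreasing, $(\eta_n*\sigma)(x,s)$ equals $H_n(x-\phi_1^{-1}(s))$ or $1-H_n(x-\phi_2^{-1}(-s))$ on the relevant ranges of $s$, and is $0$ or $1$ elsewhere. A change of variables gives, for $|x|<n$,
\[
\partial_x\bar\sigma_n(x)=\int_\r 2H_n(x-w)\eta_n(x-w)\,\phi_1'(w)\,\dd w-\int_0^\infty 2\bigl(1-H_n(x-w)\bigr)\eta_n(x-w)\,|\phi_2'(w)|\,\dd w .
\]
Since $H_n(0)=\tfrac12$, at $x=0$ this tends to $(1+\tfrac12\cdot\tfrac14)-\tfrac12(1-\tfrac14)=\tfrac34$. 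But $\partial_x\bar\sigma(0)=1$, so no estimate on $D$ can close your argument.

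Your fallback points in the right direction, but it is exactly where the proof has to happen, and it cannot be another re-averaging. Any convex combination of the functions $\sigma(x-u,t,\cdot)$ loses precisely the variance term above, so you must add mass in new $s$-directions. This is what the paper does:
\begin{enumerate}
\item It truncates the mollified coefficient $\tilde\sigma_m$ to $|s|\le m$.
\item On the disjoint strip $m+1\le s\le m+2$ it adds the compensator $\sqrt{\eta_k*g_{n,m}}$, where $g_{n,m}=C_{n,m}(t)-f_m$ and $f_m=\int_{|s|\le m}\tilde\sigma_m^2\,\dd s-\bar\sigma$.
\item The $x$-independent constant $C_{n,m}(t)$ makes $g_{n,m}\ge 1/m$, so the square root is smooth and nonnegative and the sign is preserved.
\end{enumerate}
Because the $s$-supports are disjoint, the squared norms add, giving exactly
\[
\bar\sigma_n=\zeta_{n/2}^2\bigl[\bar\sigma+C_{n,m}(t)+(\eta_k*g_{n,m}-g_{n,m})\bigr].
\]
So $\partial_x\bar\sigma_n$ is $\partial_x\bar\sigma$ plus a cutoff term plus the $C^1$-mollification error of $g_{n,m}$. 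That error is small because $g_{n,m}$ is $C^1$ in $x$, which holds because $\bar\sigma$ is. The $L^2(\dd s)$-mass of the compensator is of order $\sup_{D_{n+1}}|f_m|+1/m$, which tends to $0$ as $m\to\infty$, so \eqref{eq:assume_sig_b} survives. The indices $m=m(n)$ and $k=k(n)$ are then chosen diagonally.
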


\begin{proof}
    The drift $b$ can be handled as in Lemma~\ref{lm:a1}, so it suffices to focus on the diffusion coefficient $\sigma$. Without loss of generality, assume $\sigma \ge 0$, and define an auxiliary family of nonnegative functions $\{\sigma_{n,m}^k\}$:
    First, we mollify $\sigma$ in the $x$-variable by setting
    \begin{align*}
    \tilde{\sigma}_m(x,t,s) := \int_\r \eta_m(u)\sigma(x-u,t,s) \dd u.
    \end{align*}
    For $x\in D_n$ and $t\in [r,T]$, define
    \begin{equation}\label{def:sig_k}
        \tilde{\sigma}_{n,m}^k(x,t,s) := 
        \begin{cases} 
            \tilde{\sigma}_m(x,t,s) & \text{if } |s| \le m, \\
            \sqrt{\int_\r \eta_k(u) [C_{n,m}(t)-f_{m}(x-u,t)]\dd u} & \text{if } m+1 \le s \le m+2, \\
            0 & \text{otherwise},
        \end{cases}
    \end{equation}
    and extend it by zero for $x \notin D_n$, where $C_{n,m}(t):=\sup_{x\in D_{n+1}}f_{m}(x,t)+\tfrac{1}{m}$, and
    \begin{gather}\label{def:f_m}
        f_{m}(x,t):=\int_{|s|\le m} \tilde\sigma_{m}(x,t,s)^2\dd s-\bar \sigma(x,t).
    \end{gather}
    Finally, let
    \begin{align*}
        \sigma_{n,m}^k(x,t,s) := \zeta_{n/2}(x)\,\tilde{\sigma}_{n,m}^k(x,t,s).
    \end{align*}
    
    Fix indices $n,m,k\in \z_+$. Recall that $F_\beta^m(x,t):= \int_{\r} \left(\partial_x^\beta \tilde\sigma_m(x,t,s)\right)^2\dd s$ is bounded on $D_{n+1}\times [r,T]$ for any $\beta \ge 0$ by~\eqref{eq:a4}. Following the proof of Lemma~\ref{lm:es_conA}(i), for each $t$ the map $x \mapsto \int_{|s|\le m} \tilde\sigma_m(x,t,s)^2 \dd s$ is smooth on $D_{n+1}$. By condition~\ref{item:1c}, it follows that $f_m(\cdot,t)\in C^1(\r)$, so $C_{n,m}(t)$ is finite.
    Moreover, since $g_{n,m}(x,t) := C_{n,m}(t) - f_m(x,t)\ge \frac{1}{m}$ on $D_{n+1}$, the convolution term $\Phi_{n,m}^k(x,t):=\eta_k * g_{n,m}(x,t)$ is strictly positive. Therefore, $\sigma_{n,m}^k$ is well defined and smooth in $x$.

    The proof is divided into three steps. 
    In Step~1, we estimate $C_{n,m}(t)$. 
    In Step~2, we show that for each $(n,m,k)\in \z_+^3$, $\sigma_{n,m}^k$ satisfies~\ref{item:A2} and Assumption~\ref{assu:H}.
    In Step~3, we verify (i) and (ii) by selecting suitable indices $(m,k) = (m(n), k(n))$. 

    \medskip
    \noindent
    \textbf{Step 1:} For any $(x,t)\in D_{n+1}\times [r,T]$, we rewrite $f_m$ as 
    $$f_m=\int_\r (\tilde\sigma_{m}+\sigma)(\tilde\sigma_{m}-\sigma)\dd s-\int_{|s|> m} \tilde\sigma_{m}^2\dd s.$$
    Then, the Cauchy--Schwarz inequality yields
    \begin{align*}
        f_{m}^2\le 2\int_\r (\tilde\sigma_m+\sigma)^2\dd s \cdot\int_\r (\tilde\sigma_m-\sigma)^2\dd s+2\Bigl(\int_{|s|>m} \tilde\sigma_m^2 \dd s \Bigr)^2=:2\Sigma_1\cdot\Sigma_2+2\Sigma_3^2.
    \end{align*}
    By condition~\ref{item:1c} and~\eqref{eq:a4}, there exists $C>0$, independent of $n,m$, such that
    \begin{align*}
    \Sigma_1 
    \le 
    2 F_0(x,t)
    + 2\bar \sigma(x,t)
    \le C (1+|x|)^2.
    \end{align*}
    Setting $N=n+2$, by~\eqref{Sig_22}, $\Sigma_2$ is bounded by $\int_{\r} \eta(v) \rho_{N}(|v|/m)^2\dd v$.
    Finally,
    \begin{align}\label{sig_3}
     \nonumber   \Sigma_3\le &\int_{|s|> m}\sigma(x,t,s)^2\dd s+\int_\r |\tilde\sigma_m(x,t,s)^2-\sigma(x,t,s)^2| \dd s\\
        \le &\int_{|s|> m}\sigma(x,t,s)^2\dd s+\sqrt{\Sigma_1 \Sigma_2}.
    \end{align}

    \noindent
    Let $h_m(x,t):=\int_{|s|> m}\sigma(x,t,s)^2\dd s$. Since $\int_\r\sigma(x,t,s)^2\dd s <\infty$, we have $h_m(x,t)\downarrow 0$ as $m\to\infty$ for each $(x,t)$. By condition~\ref{item:1b},
    \begin{align*}
        \left|h_m(x,t)-h_m(y,t)\right|^2
        \le &\int_\r (\sigma(x,t,s)+\sigma(y,t,s))^2 \dd s \cdot \int_\r (\sigma(x,t,s)-\sigma(y,t,s))^2 \dd s\\
        \le & C\bigl[(1+|x|)^2+(1+|y|)^2\bigr] \rho_N(|x-y|)^2.
    \end{align*}
    Combining~\eqref{assu:conti_t} yields that $\{h_m\}_{m\ge1}$ is equicontinuous and uniformly bounded.
    By the Arzel\`a--Ascoli theorem, $h_m(x,t) \downarrow 0$ uniformly on $D_{n+1}\times [r,T]$. It follows that $C_{n,m}^2(t)$ admits the upper bound (up to a constant)
    \begin{align}\label{C_m_m}
       \frac{1}{m^2}+(n+2)^2 \int_{\r} \eta(v) \rho_{N}(|v|/m)^2\dd v + \sup_{x\in D_{n+1}} h_m(x,t)^2.
    \end{align}
    Moreover, the right-hand side converges to $0$ uniformly in $t$ as $m\to\infty$.

    \medskip
    \noindent
    \textbf{Step 2:} Fix $n,m,k\in \z_+$. By Step 1, there exists a constant $C$ such that $C_{n,m}(t)\le C$ for all $t\in [r,T]$. Set $\hat{\sigma}(x,t,s) = \tilde{\sigma}_{n,m}^k(x,t,s)$, then
    \begin{align}\label{eq:F_split}
        \int_{\r} (\partial_x^\gamma \hat{\sigma}(x,t,s))^2 \dd s
        = \underbrace{\int_{|s|\le m} (\partial_x^\gamma \tilde{\sigma}_m(x,t,s))^2 \dd s}_{\le F_\gamma^m(x,t)} + \underbrace{ \left(\partial_x^\gamma \sqrt{\Phi_{n,m}^k(x,t)}\right)^2 }_{=:H_\gamma(x,t)}.
    \end{align}

    \noindent
    When $\gamma=0$, combining~\eqref{eq:a4} and~\eqref{eq:F_split}, we obtain that 
    \begin{align*}
        \left|\int_\r \sigma_{n,m}^k(x,t,s) \sigma_{n,m}^k(y,t,s) \dd s \right|\le &\; \left|\int_\r \hsig(x,t,s) \hsig(y,t,s) \dd s \right|\\
        \le &\; \sqrt{F_0^m(x,t)+H_0(x,t)}\, \sqrt{F_0^m(y,t)+H_0(y,t)}\\
        \le &\; \sqrt{2C_0 K_\sigma (2+|x|)^2+C}\, \sqrt{2C_0 K_\sigma (2+|y|)^2+C}\\
        \le &\; (4C_0K_\sigma+C) (1+|x|)(1+|y|).
    \end{align*}
    This yields the first bound in~\ref{item:A2}. Moreover, for $\alpha=1,2,3$, the Leibniz rule gives
    \begin{equation*}
        (\partial_x^\alpha\sigma_{n,m}^k(x,t,s))^2 \le (\alpha+1)\sum_{\beta=0}^\alpha \bigl|\zeta_{n/2}^{(\beta)}(x)\bigr|^2 (\partial_x^{\alpha-\beta} \hat{\sigma}(x,t,s))^2.
    \end{equation*}
    Since $\zeta_{n/2}$ and its derivatives are bounded, and~\eqref{eq:a4} ensures that $F_\gamma(x,t)$ is bounded, uniform control follows from the boundedness of $H_\gamma(x,t)$ for $0 \le \gamma \le \alpha$. Each derivative $\partial_x^\gamma \sqrt{\Phi_{n,m}^k(x,t)}$ is a combination of terms $\Phi_{n,m}^k$ and its derivatives in the numerator, divided by $(\Phi_{n,m}^k)^{p}$ (where $p \ge 1/2$) in the denominator. Since $\Phi_{n,m}^k$ is bounded away from zero by $1/m$, these derivatives remain bounded on $D_n$.

    Finally, we verify Assumption~\ref{assu:H} by establishing~\eqref{assu:conti_t} for $\sigma_{n,m}^k$. By~\eqref{eq:conti_sig_n_t} and the uniform continuity of the square-root function, it suffices to show that for any compact sets $K_1, K_2$, as $h \to 0$,
    \begin{align*}
        \sup_{x \in K_1}\sup_{t \in K_2} \left|\Phi_{n,m}^k(x,t+h) - \Phi_{n,m}^k(x,t)\right| \to 0.
    \end{align*}
    This follows from~\eqref{assu:conti_t} and~\eqref{eq:conti_sig_n_t}, which imply that $f_m(x,t+h)-f_m(x,t)\to0$ uniformly on $K_1\times K_2$ as $h\to0$.

    \medskip
    \noindent
    \textbf{Step 3:}
    We first verify (i). 
    Fix a compact set $K\subset \r$ and choose $N\ge 1$ such that $K\subset D_N$. Then, for any $x\in K$, $t\in [r,T]$ and $n\ge 2N$, 
    \begin{multline*}
    \int_\r \bigl(\sigma_{n,m}^k(x,t,s)-\sigma(x,t,s)\bigr)^2 \dd s = \int_\r \bigl(\tilde\sigma_{n,m}^k(x,t,s)-\sigma(x,t,s)\bigr)^2 \dd s\\
    \le 2\int_\r \bigl(\tilde\sigma_{n,m}^k(x,t,s)-\tilde\sigma_m(x,t,s)\bigr)^2 \dd s+ 2\Sigma_2 \le 2\Sigma_3+2C_{n,m}(t)+2\Sigma_2.
    \end{multline*}
    By~\eqref{Sig_22},~\eqref{sig_3}, and~\eqref{C_m_m}, we have
    \begin{align}\label{lim_m}
        \lim_{m\to\infty}\sup_{k}\sup_{r\le t\le T}\sup_{x\in K}\int_{\r}\bigl(\sigma_{n,m}^k(x,t,s)-\sigma(x,t,s)\bigr)^2 \dd s=0.
    \end{align}
    
    Moreover, define $\bar\sigma_{n,m}^k(x,t):=\int_\r \sigma_{n,m}^k(x,t,s)^2\dd s$. Then, by~\eqref{def:sig_k},
    \begin{align}\label{eq:partial_sig_k}
        \bar\sigma_{n,m}^k(x,t)=\zeta_{n/2}(x)^2 \left[\int_{|s|\le m} \tilde\sigma_m(x,t,s)^2 \dd s +\eta_k* g_{n,m}(x,t) \right].
    \end{align}
    For any $n\ge 2N$, the standard mollifier approximation yields
    \begin{multline}
        \lim_{k\to \infty}\sup_{r\le t\le T}\sup_{x\in K} \partial_x\! \left[\bar\sigma_{n,m}^k(x,t)-\bar\sigma(x,t)\right]\\
        = \lim_{k\to \infty}\sup_{r\le t\le T}\sup_{x\in K} \partial_x\! \left[-g_{n,m}(x,t)+\eta_k*g_{n,m}(x,t)\right] = 0.\label{lim_k}
    \end{multline}
    
    By~\eqref{eq:partial_sig_k}, for all $n,m,k$, $\left|\partial_x\bar\sigma_{n,m}^k(x,t) \right|$ is bounded by
    \begin{align*}
        &\quad \frac{4B}{n}\,\mathbbm{1}_{\{|x|\le n\}}
        \Bigl[\int_{|s|\le m} \tilde \sigma_m^2 \dd s+\eta_k*g_{n,m}\Bigr]+\mathbbm{1}_{\{|x|\le n\}}\bigl|\partial_x \bar\sigma - \partial_x g_{n,m}+ \partial_x (\eta_k * g_{n,m})\bigr| \\
        &\le \mathbbm{1}_{\{|x|\le n\}}\left\{4B\Bigl[\tfrac{C}{2n}(1+|x|)^2 + \tfrac{C_{n,m}(t)}{n}\Bigr]+K_{\sigma}(1+|x|)+\bigl|- \partial_x g_{n,m}+ \partial_x (\eta_k * g_{n,m})\bigr|\right\} \\
        &\le \mathbbm{1}_{\{|x|\le n\}}\left\{(4BC+K_\sigma)(1+|x|) + \tfrac{4BC_{n,m}(t)}{n}+\bigl|- \partial_x g_{n,m}+ \partial_x (\eta_k * g_{n,m})\bigr|\right\}.
    \end{align*}
    Here, the first inequality follows from~\eqref{eq:a4} and condition~\ref{item:1c}. For each $n$, by Step~2 and~\eqref{lim_m}, choose $m=m(n)$ such that $\sup_{t\in [r,T]}\tfrac{4BC_{n,m}(t)}{n}\le 2^{-n}$ and the quantity appearing in~\eqref{lim_m} is bounded by $2^{-n}$. For this choice of $m(n)$, it follows from~\eqref{lim_k} that one can further choose $k=k(n)$ such that $|- \partial_x g_{n,m}+ \partial_x (\eta_k * g_{n,m})|\le 2^{-n}$ for all $(x,t)\in D_n\times [r,T]$. Consequently,~\eqref{eq:assume_sig_b} and~\eqref{eq:assume_sig_partial} hold for $\sigma_n:=\sigma_{n,m(n)}^{k(n)}$ and
    \begin{align*}
        \left|\partial_x\bar\sigma_{n,m(n)}^{k(n)} \right|\le (4BC+K_\sigma+1)(1+|x|).
    \end{align*}
    Hence, (i) and (ii) follow. This completes the proof.
\end{proof}

\begin{lemma}\label{lm:C2}
    Suppose that $(\sigma,b)$ satisfy Assumption~\ref{assu:H} and conditions~\ref{item:1a}--\ref{item:1d} on $I$. Let $N\ge 2/|I|+1$. Then there exists a sequence $(\sigma_n,b_n)_{n\ge N}$ satisfying Assumption~\ref{assu:H} and conditions~\ref{item:1a}--\ref{item:1d} on $\r$, such that,
    \begin{align}\label{eq:b_n_sig_n}
        b_n(x,t)=b(x,t)\ \text{ and }\ \sigma_n(x,t,\cdot)=\sigma(x,t,\cdot), \quad \forall x\in I_n,\, t\in \r,
    \end{align}
    where $I_n:=\bigl(\inf I+\tfrac{1}{n},\, \sup I-\tfrac{1}{n}\bigr)$, with the convention $-\infty+c=-\infty$ and $+\infty-c=+\infty$. 
\end{lemma}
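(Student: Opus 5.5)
We construct $(\sigma_n,b_n)$ by extending $(\sigma,b)$ from $I_n$ outward. The extension must keep conditions~\ref{item:1a}--\ref{item:1d} and the continuity in time~\eqref{assu:conti_t}. The natural way to do this is a spatial ``freezing'' at the boundary of $I_n$. Write $d_1=\inf I$, $d_2=\sup I$, and set $\ell_n:=d_1+\tfrac1n$ and $u_n:=d_2-\tfrac1n$; the condition $N\ge 2/|I|+1$ guarantees $\ell_n<u_n$ for $n\ge N$. Let $\pi_n:\r\to[\ell_n,u_n]$ be the projection $\pi_n(x):=(x\vee \ell_n)\wedge u_n$, with the convention that an infinite endpoint imposes no truncation. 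A first candidate is $\sigma_n(x,t,s):=\sigma(\pi_n(x),t,s)$ and $b_n(x,t):=b(\pi_n(x),t)$. These coincide with $(\sigma,b)$ on $I_n$, preserve the sign of $\sigma$, and inherit Assumption~\ref{assu:H}: $\bar\sigma_n$ is finite, and~\eqref{assu:conti_t} holds because $\pi_n(K_1)$ is a compact subset of $I$. Condition~\ref{item:1a} holds because $b$ is bounded on the compact set $[\ell_n,u_n]\times[T_1,T_2]$. Condition~\ref{item:1b} holds with the same modulus $\rho_m$ (enlarging $m$ to cover $[\ell_n,u_n]$), since $\pi_n$ is $1$-Lipschitz. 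The $b$-part of~\ref{item:1d} holds in the same way, using that $r_m$ is nondecreasing.

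The difficulty is condition~\ref{item:1c}. Under pure freezing, $\bar\sigma_n(x,t)=\bar\sigma(\pi_n(x),t)$, whose $x$-derivative jumps from $\partial_x\bar\sigma(u_n,t)$ to $0$ at $x=u_n$, so $\bar\sigma_n(\cdot,t)\notin C^1$. To repair this, we modify only a layer outside $I_n$ and keep the extension $C^1$ in $\bar\sigma$. The plan is to replace $\pi_n$ by a smooth nondecreasing map $\psi_n:\r\to[\ell_n',u_n']$ with the following properties:
\begin{itemize}
\item[] $\psi_n(x)=x$ on $[\ell_n,u_n]$, with $\ell_n':=d_1+\tfrac1{2n}$ and $u_n':=d_2-\tfrac1{2n}$ (still inside $I$);
\item[] $\psi_n$ is constant outside $[\ell_n-\tfrac1{4n},\,u_n+\tfrac1{4n}]$;
\item[] $0\le\psi_n'\le 1$, and $\psi_n\in C^\infty$.
\end{itemize}
With $\sigma_n(x,t,s):=\sigma(\psi_n(x),t,s)$ we get $\bar\sigma_n(x,t)=\bar\sigma(\psi_n(x),t)$, which lies in $C^1$ by the chain rule, with $\partial_x\bar\sigma_n=\psi_n'(x)\,\partial_x\bar\sigma(\psi_n(x),t)$. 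This derivative is bounded, since $\partial_x\bar\sigma$ is continuous on the compact set $[\ell_n',u_n']\times[T_1,T_2]$ (Lemma~\ref{app:lem_conti}); hence~\ref{item:1c} holds.

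For the $\bar\sigma$-part of~\ref{item:1d}, split the difference as
\[
|\psi_n'(x)\,g(\psi_n(x))-\psi_n'(y)\,g(\psi_n(y))|\le |\psi_n'(x)-\psi_n'(y)|\,\|g\|_\infty+|g(\psi_n(x))-g(\psi_n(y))|,
\]
where $g:=\partial_x\bar\sigma(\cdot,t)$. The first term is at most $C_n|x-y|$; the second is at most $r_m(|x-y|)$. The sum $r_m(z)+C_nz$ is again nondecreasing and concave. It also satisfies Osgood's condition: either $r_m(z)\ge z$ near $0$, so the sum is at most $(1+C_n)r_m$, or it is not, and the sum is bounded by $(1+C_n)z$ on a neighbourhood of $0$; in both cases $\int_{0_+}(r_m+C_nz)^{-1}\,\dd z=\infty$. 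Also, $r_m$ is only needed on $[-m,m]$ after composition, which is available for $m$ large. The drift is handled with $b_n:=b(\psi_n(\cdot),t)$ exactly as before. The main point to check is that~\ref{item:1b}--\ref{item:1d} were assumed for $(\sigma,b)$ \emph{on $I$}, with moduli on compact subsets of $I$. Since $\psi_n$ maps $\r$ into the compact set $[\ell_n',u_n']\subset I$ and is $1$-Lipschitz, every required estimate transfers, with constants depending on $n$. The lemma allows this, because it only asks that each $(\sigma_n,b_n)$ satisfy the conditions, not that they do so uniformly in $n$.
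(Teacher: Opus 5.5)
Your proposal is correct and is essentially the paper's argument. The paper composes $(\sigma,b)$ with $\chi_n(x)=x_0+\int_{x_0}^x\psi(n(u-\inf I))\,\psi(n(\sup I-u))\,\dd u$, a smooth nondecreasing map with $0\le\chi_n'\le 1$ that equals the identity on $I_n$ and takes values in $I$. It then checks \ref{item:1a}--\ref{item:1d} via the chain rule and the modulus $r_M(z)+K_\sigma(1+M)L_n z$, just as you do.

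One small slip: when $I$ is unbounded, the range of your $\psi_n$ is not compact. In that case \ref{item:1a} and \ref{item:1c} should come from the linear-growth bounds on $I$ together with $|\psi_n(x)|\le |x|+C_n$ (as the paper does with $|\chi_n(x)|\le C(1+|x|)$), not from boundedness on a compact set.
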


\begin{proof}
    Let $\psi$ be a smooth step function with $\psi(u)=0$ for $u\le 0$ and $\psi(u)=1$ for $u\ge 1$. Set $a=\inf I$ and $b=\sup I$, and fix $x_0\in I_n$ for $n\ge N$. Define
    $$
    \chi_n(x) := x_0 + \int_{x_0}^x \psi(n(u-a)) \psi(n(b-u))  \dd u,
    $$
    with the convention $\psi(+\infty) = 1$. Then $\chi_n$ is smooth, nondecreasing, and maps $\r$ into $I$, with $0 \le \chi_n' \le 1$. Moreover, $\chi_n(x)=x$ for all $x\in I_n$.
    Since $\chi_n''$ is supported on $\bar I\setminus I_n$, $\chi_n'$ is Lipschitz; that is, there exists $L_n>0$ such that $|\chi_n'(x)-\chi_n'(y)| \le L_n |x-y|$ for all $x,y \in \r$. The bound $|\chi_n'|\le 1$ further implies $|\chi_n(x)|\le C(1+|x|)$, where $C := \max\{1, 2|x_0|\}$ is independent of $n$. For any $x,t,s\in \r$, set
    $$
    b_n(x,t):=b\big(\chi_n(x),t\big),\qquad \sigma_n(x,t,s):=\sigma\big(\chi_n(x),t,s\big).
    $$

    By construction, both~\eqref{eq:b_n_sig_n} and Assumption~\ref{assu:H} hold. It remains to verify that the pair $(\sigma_n,b_n)$ satisfies \ref{item:1a}--\ref{item:1d} on $\r$. Fix a finite time interval $[T_1,T_2]$ and let $t\in[T_1,T_2]$. First, condition~\ref{item:1a} follows from
    $$
    |b_n(x,t)|\le K_b(1+|\chi_n(x)|)\le K_b(1+C(1+|x|))\le 2CK_b(1+|x|),\qquad \forall x\in \r.
    $$
    To check~\ref{item:1b}, fix $m\ge 1$ and choose $M=M(m,n)$ such that $\chi_n([-m,m])\subset [-M,M]\cap I$. By~\ref{item:1b} for $(\sigma,b)$, there is a nonnegative nondecreasing function $\rho_M$ on $\r_+$ such that 
    $$
    \int_\r|\sigma_n(x,t,s)-\sigma_n(y,t,s)|^2\dd s\le \rho_M(|\chi_n(x)-\chi_n(y)|)^2\le \rho_M(|x-y|)^2,
    $$
    for all $x, y \in [-m,m]$, and $\int_{0_+} \rho_M(z)^{-2} \dd z=\infty$. As for~\ref{item:1c}, noting that $\bar\sigma(\chi_n(x),t)=\bar\sigma_n(x,t)$, the chain rule combined with condition~\ref{item:1c} for $(\sigma,b)$ implies that
    \begin{align*}
        |\bar\sigma_n'(x,t)|\le |\partial_x \bar\sigma(\chi_n(x),t)||\chi_n'(x)|\le K_\sigma (1+|\chi_n(x)|)\le 2CK_\sigma (1+|x|),\qquad \forall x\in \r.
    \end{align*}
    Finally, for each $m\ge 1$, choose $M=M(m,n)$ as above. By condition~\ref{item:1d} for $(\sigma,b)$, there is a nondecreasing concave function $r_M$ on $\r_+$ such that for all $x, y \in [-m,m]$,
    \begin{align*}
    &|b_n(x,t)-b_n(y,t)|+|\bar\sigma_n'(x,t)-\bar\sigma_n'(y,t)| \\
    \le &\,
    |b(\chi_n(x),t)-b(\chi_n(y),t)|
    +|\partial_x \bar\sigma(\chi_n(x),t)-\partial_x \bar\sigma(\chi_n(y),t)||\chi_n'(y)|\\
    & \qquad \qquad \qquad \qquad \qquad \qquad \;  +|\partial_x \bar\sigma(\chi_n(x),t)||\chi_n'(x)-\chi_n'(y)| \\
    \le &\,
    r_M(|x-y|)+K_\sigma(1+M)L_n|x-y|.
    \end{align*}
    Let $\tilde r_{m,n}(z):=r_M(z)+K_\sigma(1+M)L_n z$. Then $\tilde r_{m,n}$ is nondecreasing and concave. Furthermore, $\int_{0_+} \tilde r_{m,n}(z)^{-1} \dd z=\infty$. Indeed, since $r_M$ is concave and $r_M(0)=0$, the map $z \mapsto r_M(z)/z$ is nonincreasing on $(0, \infty)$. This guarantees that on a sufficiently small interval $(0, \delta)$, either $r_M(z) \ge K_\sigma(1+M)L_n z$ or $r_M(z) \le K_\sigma(1+M)L_n z$. Hence, $\tilde r_{m,n}(z)$ is uniformly bounded from above by either $2r_M(z)$ or $2K_\sigma(1+M)L_n z$ on $(0, \delta)$. Since both $\int_{0_+} r_M(z)^{-1} \dd z = \infty$ and $\int_{0_+} z^{-1} \dd z = \infty$, the integral of $\tilde r_{m,n}(z)^{-1}$ must diverge.
\end{proof}


\section{Pathwise uniqueness}\label{app:path_unique}

This section is mainly devoted to pathwise uniqueness for~\eqref{eq:SDE_gener} and~\eqref{eq:back_flow}. The proof follows the standard argument for one-dimensional SDEs with non-Lipschitz coefficients (see, e.g., \cite[Chapter~IV, Theorem~3.2]{SDE81}), and is included for completeness. We also record the proof of Lemma~\ref{app:lem_conti}.

\begin{lemma}\label{app:lem_conti}
    Suppose that Assumption~\ref{assu:H} and conditions~\ref{item:1a}, \ref{item:1b}, \ref{item:2c}, and~\ref{item:2d} hold. Then the map $(x,t)\mapsto \bar\sigma(x,t)$ is continuous. Moreover, if conditions~\ref{item:1c} and~\ref{item:1d} hold, then $(x,t)\mapsto -b(x,t)+\tfrac12\partial_x\bar\sigma(x,t)$ is also continuous on $\r^2$.
\end{lemma}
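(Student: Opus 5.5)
We prove joint continuity of $\bar\sigma$ by splitting the increment into a time part and a space part, each controlled by one of the standing hypotheses. The continuity of $-b+\tfrac12\partial_x\bar\sigma$ then follows from continuity of $\partial_x\bar\sigma$, obtained by writing $\partial_x\bar\sigma$ as a difference quotient of $\bar\sigma$ plus an error that condition~\ref{item:1d} makes uniformly small.

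\emph{Continuity of $\bar\sigma$.} Fix $(x_0,t_0)$ and a compact neighbourhood $[-m,m]\times[T_1,T_2]$. Write $\|\cdot\|$ for the $L^2(\dd s)$ norm, so that $\bar\sigma(x,t)=\|\sigma(x,t,\cdot)\|^2$. For $(x,t)$ in this neighbourhood, the triangle inequality gives
\begin{align*}
\bigl|\|\sigma(x,t,\cdot)\|-\|\sigma(x_0,t_0,\cdot)\|\bigr|
\le \|\sigma(x,t,\cdot)-\sigma(x_0,t,\cdot)\|+\|\sigma(x_0,t,\cdot)-\sigma(x_0,t_0,\cdot)\|.
\end{align*}
By~\ref{item:1b}, the first term is at most $\rho_m(|x-x_0|)$. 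This tends to $0$ because $\rho_m(0+)=0$: a nondecreasing nonnegative $\rho_m$ with $\rho_m(0+)>0$ would make $\int_{0+}\rho_m^{-2}$ finite. The second term tends to $0$ by~\eqref{assu:conti_t} with $K_1=\{x_0\}$. Hence $\sqrt{\bar\sigma}$ is continuous, and so is $\bar\sigma$, since it is finite by~\ref{item:2c}.

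\emph{Continuity of $\partial_x\bar\sigma$ under~\ref{item:1c}--\ref{item:1d}.} Fix $(x_0,t_0)$ and $m$ with $|x_0|+1<m$. For $h\in(0,1)$ set
\begin{align*}
D_h(x,t):=\frac{\bar\sigma(x+h,t)-\bar\sigma(x,t)}{h}=\frac1h\int_x^{x+h}\partial_x\bar\sigma(y,t)\,\dd y,
\end{align*}
which is valid since $\bar\sigma(\cdot,t)\in C^1$. By~\ref{item:1d}, for $|x|\le m-1$ and $t\in[T_1,T_2]$,
\begin{align*}
|D_h(x,t)-\partial_x\bar\sigma(x,t)|\le r_m(h).
\end{align*}
Here $r_m(0+)=0$, by the same argument as for $\rho_m$ (concave, nondecreasing, with a divergent integral of $r_m^{-1}$ near $0$). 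Thus $\partial_x\bar\sigma$ is a uniform limit, on $[-m+1,m-1]\times[T_1,T_2]$, of the functions $D_h$ as $h\downarrow0$. Each $D_h$ is jointly continuous by the first step, so $\partial_x\bar\sigma$ is jointly continuous. Combined with the continuity of $b$ from Assumption~\ref{assu:H}, this gives continuity of $-b+\tfrac12\partial_x\bar\sigma$.

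The only delicate points are the two facts $\rho_m(0+)=0$ and $r_m(0+)=0$, which the argument uses but the hypotheses only state through divergent integrals, and the need to take the uniform-limit step with uniformity in $t$. The latter is exactly what~\ref{item:1d} provides, since its modulus $r_m$ does not depend on $t\in[T_1,T_2]$.
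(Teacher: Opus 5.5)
Your proof is correct. The first part follows essentially the paper's argument: you split the increment into a spatial piece controlled by \ref{item:1b} and a temporal piece controlled by \eqref{assu:conti_t}. Working with $\sqrt{\bar\sigma}$ and the reverse triangle inequality in $L^2(\dd s)$ even removes the need for the bound from \ref{item:2c}, which the paper's estimate $|\bar\sigma(x_n,t_n)-\bar\sigma(x,t_n)|^2\le 2(\bar\sigma(x_n,t_n)+\bar\sigma(x,t_n))\rho_m(|x_n-x|)^2$ relies on.

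For the continuity of $\partial_x\bar\sigma$ you take a genuinely different route. The paper handles the spatial increment directly by \ref{item:1d}. It treats the temporal increment $\partial_x\bar\sigma(x,t_n)-\partial_x\bar\sigma(x,t)$ by compactness:
\begin{itemize}
\item it applies Arzel\`a--Ascoli to $\{\partial_x\bar\sigma(\cdot,t)\}_{t\in K_2}$, with uniform boundedness from the growth bound in \ref{item:1c} and equicontinuity from \ref{item:1d};
\item it identifies every subsequential limit with $\partial_x\bar\sigma(\cdot,t)$ by passing to the limit in $\bar\sigma(y,t_{n_k})-\bar\sigma(0,t_{n_k})=\int_0^y\partial_z\bar\sigma(z,t_{n_k})\,\dd z$.
\end{itemize}

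Your argument is direct instead. The difference quotients $D_h$ are jointly continuous by your first step, and \ref{item:1d} gives $\sup|D_h-\partial_x\bar\sigma|\le r_m(h)$ on $[-m+1,m-1]\times[T_1,T_2]$. Hence $\partial_x\bar\sigma$ is locally a uniform limit of continuous functions.

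Both proofs rest on the same two inputs: joint continuity of $\bar\sigma$, and a modulus of continuity for $\partial_x\bar\sigma(\cdot,t)$ that is uniform in $t$. Yours has three advantages:
\begin{itemize}
\item it avoids subsequence extraction and the limit-identification step;
\item it does not use the linear growth bound in \ref{item:1c}, only $\bar\sigma(\cdot,t)\in C^1$;
\item it makes explicit the facts $\rho_m(0+)=r_m(0+)=0$, which the paper uses tacitly.
\end{itemize}
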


\begin{proof}
    Let $(x_n,t_n)\to(x,t)$. Then there exist integer $m\ge 1$ and compact sets $K_1,K_2$ such that $x_n,x\in K_1\subset[-m,m]$ and $t_n,t\in K_2$. By the triangle inequality,
    \begin{align}\label{eq:trian}
    |\bar\sigma(x_n,t_n)-\bar\sigma(x,t)|
    \le |\bar\sigma(x_n,t_n)-\bar\sigma(x,t_n)|
    +|\bar\sigma(x,t_n)-\bar\sigma(x,t)|.
    \end{align}
    By conditions~\ref{item:1b} and~\ref{item:2c}, and the Cauchy--Schwarz inequality,
    \begin{align*}
    |\bar\sigma(x_n,t_n)-\bar\sigma(x,t_n)|^2
    \le 2\bigl(\bar\sigma(x_n,t_n)+\bar\sigma(x,t_n)\bigr)\rho_m(|x_n-x|)^2 \to 0,
    \end{align*}
    as $x_n\to x$. Together with~\eqref{assu:conti_t} and~\eqref{eq:trian}, this implies that $(x,t)\mapsto \bar\sigma(x,t)$ is continuous.

    Since Assumption~\ref{assu:H} requires continuity of $b(x,t)$, it suffices to prove that $\partial_x\bar\sigma$ is continuous under conditions~\ref{item:1c} and~\ref{item:1d}. Applying the triangle inequality again,
    \begin{align*}
    |\partial_x\bar\sigma(x_n,t_n)-\partial_x\bar\sigma(x,t)|
    &\le |\partial_x\bar\sigma(x_n,t_n)-\partial_x\bar\sigma(x,t_n)|
    +|\partial_x\bar\sigma(x,t_n)-\partial_x\bar\sigma(x,t)|.
    \end{align*}
    The first term vanishes by condition~\ref{item:1d}. To treat the second term, we use the Arzelà--Ascoli theorem. By conditions~\ref{item:1c} and~\ref{item:1d}, the family $\{\partial_x\bar\sigma(\cdot,t)\}_{t\in K_2}$ is uniformly bounded and equicontinuous on $K_1$. Hence, $(t_n)$ admits a further subsequence $(t_{n_k})$ such that $\partial_x\bar\sigma(\cdot,t_{n_k})$ converges uniformly on $K_1$ to a continuous function $g$.
    For any $y\in K_1$,
    \begin{align*}
    \bar\sigma(y,t_{n_k})-\bar\sigma(0,t_{n_k})
    =\int_0^y \partial_z \bar\sigma(z,t_{n_k})\,\dd z.
    \end{align*}
    Letting $k\to\infty$ yields $\bar\sigma(y,t)-\bar\sigma(0,t)=\int_0^y g(z)\dd z$, and differentiating with respect to $y$ reveals $g(y)=\partial_x\bar\sigma(y,t)$. Since every subsequence has the same limit, the whole sequence converges, hence $\partial_x\bar\sigma(x,t_n)\to\partial_x\bar\sigma(x,t)$. This completes the proof.
\end{proof}

In the following, we assume that $(\sigma,b)$ satisfy Assumption~\ref{assu:H} and conditions~\ref{item:1a}, \ref{item:1b}, \ref{item:2c}, and~\ref{item:2d}. Fix $(a,r)\in\r^2$, and let $X$ and $Y$ be two solutions to~\eqref{eq:SDE_gener} driven by the same white noise $\cW$. Our goal is to establish pathwise uniqueness for~\eqref{eq:SDE_gener}, namely,
\begin{align*}
\mathbb{P}\bigl(X_x=Y_x,\ \forall x\ge r\bigr)=1.
\end{align*}
Fix $T>r$ and define the stopping times
$$
\tau_m := \inf\bigl\{ x \in [r,T] :\, |X_x| \ge m \text{ or } |Y_x| \ge m \bigr\} \wedge T.
$$
Since $\tau_m\uparrow T$ almost surely by Lemma~\ref{lem:range}, it remains to prove that for every integer $m\ge |a|+1$, one has $X_x=Y_x$ for all $x\in[r,\tau_m]$ almost surely. 

\smallskip
Set $Z_x:=X_x-Y_x$ for $x\in[r,\tau_m]$. Then $Z_r=0$, and for every $x\in[r,\tau_m]$, $Z$ satisfies
\begin{equation*}
	Z_x = \int_r^x\!\! \int_{\r} \eta(u,s) \cW(\dd s, \dd u) + \int_r^x \beta(u) \dd u,
\end{equation*}
where $\eta(u,s) := \sigma(X_u, u, s) - \sigma(Y_u, u, s)$ and $\beta(u) := b(X_u, u) - b(Y_u, u)$. Since both $X$ and $Y$ remain in $[-m,m]$ on $[r,\tau_m]$, by conditions~\ref{item:1b} and~\ref{item:2d}, there exist nondecreasing functions $\rho,\kappa$ on $\r_+$, with $\rho$ nonnegative and $\kappa$ concave, satisfying $\int_{0_+} \rho(z)^{-2} \dd z=\int_{0_+} \kappa(z)^{-1} \dd z=\infty$ such that for any $u\in[r,\tau_m]$,
\begin{align}\label{eq:boun_app_c}
    \int_{\r} \eta(u,s)^2 \dd s \leq \rho(|Z_u|)^2, \qquad |\beta(u)| \leq \kappa(|Z_u|).
\end{align}

\noindent
Let $\{a_n\}_{n \ge 0}\subset \r_+$ be a decreasing sequence such that $a_0 = 1$, $a_n \downarrow 0$, and for each $n \ge 1$, $\int_{a_n}^{a_{n-1}}\! \rho(z)^{-2} \dd z = n$.
For each $n\ge1$, choose a continuous function $\psi_n$ on $\r$ supported in $(a_n,a_{n-1})$ such that $\int_{a_n}^{a_{n-1}}\!\psi_n(z)\dd z=1$, and $0\le\psi_n(z)\le \frac{2}{n\rho(z)^2}$ for all $z\in(a_n,a_{n-1})$.
Define $\phi_n\in C^2(\r)$ by
$$
\phi_n(x):=\int_0^{|x|}\!\! \int_0^y \psi_n(z)\dd z \dd y, \qquad x\in\r.
$$
For $x\in[r,\tau_m]$, applying It\^o's formula to $\phi_n(Z_x)$ gives
\begin{align*}
	\phi_n(Z_x) &= \int_r^x \phi_n'(Z_u) \beta(u) \dd u + \int_r^x\!\! \int_{\r} \phi_n'(Z_u) \eta(u,s) \cW(\dd s, \dd u) \\
	&\quad + \frac{1}{2} \int_r^x \phi_n''(Z_u) \int_{\r} \eta(u,s)^2 \dd s \dd u. 
\end{align*}
Taking expectations on both sides, we obtain
\begin{align}\label{eq:ito}
	\e[\phi_n(Z_x)] = \e \!\left[\int_r^x \phi_n'(Z_u) \beta(u) \dd u\right] + \frac{1}{2} \e\! \left[ \int_r^x \phi_n''(Z_u) \int_{\r} \eta(u,s)^2 \dd s \dd u\right] =: I_1+I_2.
\end{align}
By~\eqref{eq:boun_app_c} and $|\phi_n'(\cdot)|\le 1$, we have $|I_1|\le \int_r^x\e \!\left[\kappa(|Z_u|) \right] \dd u\le \int_r^x \kappa(\e [|Z_u|]) \dd u$, where the second inequality follows from Jensen's inequality and the concavity of $\kappa$. Moreover, by~\eqref{eq:boun_app_c} and the estimate $\phi_n''(x)=\psi_n(|x|)\le \frac{2}{n\rho(|x|)^2}\mathbbm{1}_{\{a_n<|x|<a_{n-1}\}}$, it holds that $|I_2|\le \frac{x-r}{n} \le \frac{T-r}{n}$. Letting $n\to \infty$ in~\eqref{eq:ito}, and noting that $\phi_n(x)\uparrow |x|$ as $n \to \infty$, we obtain by the monotone convergence theorem that $\e [|Z_x|]\le \int_r^x \kappa(\e [|Z_u|]) \dd u$. Thus, by Osgood's uniqueness theorem (see, e.g., \cite[pp.~12--13]{ODE93}), $\e[|Z_x|] = 0$ for all $x \in [r, \tau_m]$, which implies $Z_x = 0$ almost surely for all $x \in [r, \tau_m]$. Therefore, pathwise uniqueness holds, and the Yamada–Watanabe theorem yields the unique strong solution.

\bibliographystyle{abbrvurl}
\bibliography{Duality}

\end{document}